\newtheorem{prop}{Proposition}[section]
\newtheorem{thm}[prop]{Theorem}
\newtheorem{cor}[prop]{Corollary}
\newtheorem{lemma}[prop]{Lemma}
\theoremstyle{definition}
\newtheorem{defn}[prop]{Definition}
\newtheorem{example}[prop]{Example}
\numberwithin{equation}{section}
\newcommand\A{{\mathcal A}}
\newcommand\C{{\mathbb C}}
\newcommand\Q{{\mathbb Q}}
\newcommand\N{{\mathbb N}}
\newcommand{\cN}{{\mathcal N}}
\newcommand\IH{{\mathbb H}}
\newcommand{\Ti}{\Theta}
\newcommand{\om}{{\varpi}}
\newcommand\cC{{\mathcal C}}
\newcommand\cH{{\mathcal H}}
\newcommand\X{{\mathfrak X}}
\newcommand\x{{\mathrm{x}}}
\newcommand\y{{\mathrm{y}}}
\newcommand\Z{{\mathbb Z}}
\newcommand\cW{{\mathcal W}}
\newcommand\AS{{\mathfrak S}}
\newcommand\BS{{\mathfrak B}}
\newcommand\CS{{\mathfrak C}}
\newcommand\DS{{\mathfrak D}}
\newcommand\XX{{\mathrm X}}
\newcommand\YY{{\mathrm Y}}
\newcommand\RR{R^{\infty}}
\newcommand\I{{\mathrm I}}
\newcommand\J{{\mathrm J}}
\newcommand\fraka{{\mathfrak a}}
\newcommand\frakb{{\mathfrak b}}
\newcommand\cc{{\mathfrak c}}
\newcommand\al{\alpha}
\newcommand{\be}{\beta}
\newcommand\la{\lambda}
\newcommand\s{{\sigma}}
\newcommand\Eta{H}
\newcommand\supp{{\mathrm{supp}}}
\newcommand\noin{\noindent}
\newcommand\lra{\longrightarrow}
\newcommand\bull{{\scriptscriptstyle \bullet}}
\newcommand\eqto{\stackrel{\lower1.5pt\hbox{$\scriptstyle\sim\,$}}\to}
\newcommand\ov{\overline}
\newcommand\wh{\widehat}
\newcommand\wt{\widetilde}
\newcommand\dis{\displaystyle}
\DeclareMathOperator{\Pf}{Pfaffian}
\DeclareMathOperator{\Sp}{Sp}
\DeclareMathOperator{\GL}{GL}
\DeclareMathOperator{\LG}{LG}
\DeclareMathOperator{\IF}{IF}
\DeclareMathOperator{\HH}{\mathrm{H}}
\DeclareMathOperator{\type}{\mathrm{type}}
\DeclareMathOperator{\rank}{\mathrm{rank}}
\newcommand{\ignore}[1]{}
\begin{document}

\title[Schubert polynomials and degeneracy locus formulas]
{Schubert polynomials and degeneracy locus formulas}

\date{August 31, 2017}

\author{Harry~Tamvakis} \address{University of Maryland, Department of
Mathematics, William E. Kirwan Hall, 4176 Campus Drive, 
College Park, MD 20742, USA}
\email{harryt@math.umd.edu}

\subjclass[2010]{Primary 14M15; Secondary 05E05, 14N15}

\thanks{The author was supported in part by NSF Grant DMS-1303352.}

\begin{abstract}
In previous work \cite{T6}, we employed the approach to Schubert
polynomials by Fomin, Stanley, and Kirillov to obtain simple, uniform
proofs that the double Schubert polynomials of Lascoux and
Sch\"utzenberger and Ikeda, Mihalcea, and Naruse represent degeneracy
loci for the classical groups in the sense of Fulton. Using this as
our starting point, and purely combinatorial methods, we obtain a new
proof of the general formulas of \cite{T5}, which represent the
degeneracy loci coming from any isotropic partial flag variety. 
Along the way, we also find several new formulas and elucidate the
connections between some earlier ones.
\end{abstract}

\maketitle

\tableofcontents

\setcounter{section}{-1}

\section{Introduction}

In the 1990s, Fulton \cite{Fu1, Fu2} introduced a notion of degeneracy
loci determined by flags of vector bundles associated to any classical
Lie group $G$. For any two (isotropic) flags $E_\bull$ and $F_\bull$
of subbundles of a (symplectic or orthogonal) vector bundle $E$ over a
base variety $M$, and an element $w$ in the Weyl group of $G$, there
is a locus $\X_w\subset M$ defined by incidence relations between the
flags.  The {\em degeneracy locus problem} is to find a universal
polynomial $P_w$ in the Chern classes of the vector bundles involved
such that $[\X_w] = P_w\cap [M]$. We ask that $P_w$ should be
combinatorially explicit and manifestly respect the symmetries (that
is, the descent sets) of both $w$ and $w^{-1}$, whenever possible.

When $G$ is the general linear group, the degeneracy locus problem was
solved by Buch, Kresch, Yong, and the author \cite{BKTY}. This answer
was extended in a type uniform way to the symplectic and orthogonal
Lie groups in \cite{T5}. The formulas of \cite{BKTY, T5} rely in part
on a theory of {\em Schubert polynomials}, which express the classes
of the degeneracy loci in terms of the Chern {\em roots} of the vector
bundles $E_\bull$ and $F_\bull$. The desired combinatorial theory of
Schubert polynomials, together with its connection to geometry, was
established in the papers \cite{LS, L, Fu1} (for Lie type A) and
\cite{BH, T2, T3, IMN1, T5} (for Lie types B, C, and D).

In previous work \cite[\S 7.3]{T6}, we employed Fomin, Stanley, and
Kirillov's nilCoxeter algebra approach to Schubert polynomials
\cite{FS, FK} to give simple, uniform proofs that the double Schubert
polynomials of Lascoux and Sch\"utzenberger \cite{LS, L} and Ikeda,
Mihalcea, and Naruse \cite{IMN1} represent degeneracy loci of vector
bundles, in the above sense. Our main goal in this paper is to begin
with the same definition of Schubert polynomials from \cite{T5, T6} and, by
purely combinatorial methods, derive the splitting formulas for these
polynomials found in \cite[\S 3 and \S 6]{T5}. The latter results then 
imply the general degeneracy locus formulas of \cite{T5}. The proof of the 
corresponding type A splitting formula from \cite{BKTY} is essentially
combinatorial; this is clarified in \cite[\S 1.4]{T5} and \S \ref{tAt}
of the present paper.

As in \cite{T5}, our arguments depend on two key results from
\cite{BKT2, BKT3}, which state that the single Schubert polynomials
indexed by Grassmannian elements of the Weyl groups are represented by
(single) {\em theta} and {\em eta polynomials}. The original proofs of
these theorems used the classical Pieri rules from \cite{BKT1}, which
were derived geometrically in op.\ cit.\ by intersecting Schubert
cells. More recent proofs by Ikeda and Matsumura \cite{IM}, the
author and Wilson \cite{TW, T7}, and Anderson and Fulton \cite{AF2}
use localization in equivariant cohomology (following \cite{Ar, KK})
or employ other geometric arguments stemming from Kazarian's work
\cite{Ka}. However, the statements of the aforementioned theorems from
\cite{BKT2, BKT3} are entirely combinatorial, and it is natural to
seek proofs of them within the same framework. The corresponding
result in type A is the elementary fact that the symmetric Schubert
polynomials are equal to Schur polynomials.

The approach we take here begins by extending Billey and Haiman's
formula \cite[Prop.\ 4.15]{BH} for the single Schubert polynomials
indexed by the longest element in the Weyl group of $G$ to the double
Schubert polynomials. From this, we deduce corresponding
Pfaffian formulas equivalent to those in \cite[Thm.\ 1.2]{IMN1}
and \cite{AF1, AF2} for the (equivariant) Schubert class of a point on
the complete flag variety $G/B$. Following the method of \cite[\S
  8]{IM} in the symplectic case, by employing the left divided
difference operators, we then derive analogous formulas for the class
of a point on symplectic and even orthogonal Grassmannians, which
first appeared in Kazarian's paper \cite{Ka}. The proof continues by
using the arguments found in \cite{TW, T7} to arrive at {\em double
  theta} and {\em double eta polynomials}, and then specializing to
obtain their single versions in \cite{BKT2, BKT3}. Finally, we
establish more general versions of the Schubert splitting
formulas of \cite{T5} with the help of the double mixed Stanley
functions and $k$-transition trees introduced in op.\ cit.

The result of our efforts is a straightforward and type uniform
combinatorial proof of the equivariant Giambelli and degeneracy locus
formulas of \cite{Ka, I, IN, IMN1, IM, T5, T7, TW}. Along the way, we
obtain several new formulas, and illuminate the connections between
some earlier ones. In particular, we define in \S \ref{AStil} the {\em
  reverse double Schubert polynomials} $\wt{\AS}_\om(Y,Z)$, which
provide a bridge between the type A and type C theories. Furthermore,
in type D, inspired by \cite{I, IN, IMN1}, we use Ivanov's double (or
factorial) Schur $P$-functions \cite{Iv1, Iv2, Iv3}, and obtain in \S
\ref{spfan} new results about them, which are suitable for our
purposes.

Our earlier paper \cite{T6} contained a detailed exposition of the
various ingredients that went into \cite{T5}, revealing the author's
perspective on the subject of degeneracy loci, as it stood in 2009. We
revisit some of that material here, for completeness, but focus in \S
\ref{tAt} -- \S \ref{tDt} on what is required for our new
combinatorial proofs of the main results. In \S \ref{geometriz}, we
discuss how to use {\em geometrization} to translate the theorems of
the previous sections into Chern class formulas, and provide some
detailed remarks on the history of this problem, which supplement the
ones contained in \cite{T6}.

In a recent preprint, Anderson and Fulton \cite{AF2} use Young's
raising operators and algebro-geometric arguments to define {\em
  multi-theta} and {\em multi-eta polynomials}, which extend the
double theta and eta polynomials of \cite{TW, T7, W} even further. The
resulting degeneracy locus formulas and their proofs are an important
contribution to the theory of theta and eta polynomials, but hold only
for certain special elements $w$ of the Weyl group of $G$. We leave
the task of including them within the present algebraic and
combinatorial framework to future research.

As we mentioned above, besides new and uniform proofs of earlier
theorems, the present paper also contains many original results, which
appear in the text without attribution. For the reader's convenience,
we provide a list of the main ones here. To the best of our knowledge,
Definition \ref{rdsp}, Propositions \ref{SchubOm}, \ref{Ctopfirst},
\ref{wknprop2}, \ref{split2prop}, \ref{Pxtprop}, \ref{Pprop},
\ref{Dtopfirst}, and \ref{split2Dprop}, and Corollaries
\ref{Schubom0}, \ref{Pcor}, and \ref{Dtopcor} are new.

This article is organized as follows. In \S \ref{tAt} we review the
type A theory, culminating in the relevant splitting formulas for type
A double Schubert polynomials from \cite{BKTY}. In \S \ref{fspdual},
we discuss flagged Schur polynomials, type A duality, and introduce
the reverse double Schubert polynomials $\wt{\AS}_\om(Y,Z)$. These
objects are used in \S \ref{tCt} and \S \ref{tDt}, which provide the
corresponding theory in Lie types C and D.  Finally, in \S
\ref{geometriz} we discuss the history of the geometrization of the
single and double Schubert polynomials, focusing on the symplectic
case.

My work on this paper was inspired by my participation in the
conference in Algebraic Geometry `IMPANGA 15' on 12-18 April 2015 in
B\k{e}dlewo, Poland. I am grateful to the organizing committee for the
invitation and for bringing together many researchers in the area in
such a pleasant and stimulating atmosphere. I also thank Andrew Kresch
for his helpful comments on the manuscript.

\section{The type A theory}
\label{tAt}

\subsection{Schubert polynomials and divided differences}
\label{spsdd}

The Weyl group $S_n$ of permutations of $\{1,\ldots, n\}$ is generated
by the $n-1$ simple transpositions $s_i=(i,i+1)$. 
A {\em reduced word} of a permutation $\om$
in $S_\infty$ is a sequence $a_1\cdots a_\ell$ of positive integers
such that $\om=s_{a_1}\cdots s_{a_\ell}$ and $\ell$ is minimal, so (by
definition) equal to the length $\ell(\om)$ of $\om$. We say that
$\om$ has a {\em descent} at position $r$ if $\ell(\om s_r)<\ell(\om)$,
where $s_r$ is the simple reflection indexed by $r$.

Our main references for type A Schubert polynomials are \cite{LS, M2,
  FS}.  We recall from \cite{FS} their construction using the {\em
  nilCoxeter algebra} $\cN_n$ of the symmetric group $S_n$.  By
definition, $\cN_n$ is the free associative algebra with unit
generated by the elements $u_1,\ldots,u_{n-1}$ modulo the relations
\[
\begin{array}{rclr}
u_i^2 & = & 0 & i\geq 1\ ; \\
u_iu_j & = & u_ju_i & |i-j|\geq 2\ ; \\
u_iu_{i+1}u_i & = & u_{i+1}u_iu_{i+1} & i\geq 1.
\end{array}
\]
For any $\om\in S_n$, choose a reduced word $a_1\cdots a_\ell$ for
$\om$ and define $u_\om := u_{a_1}\ldots u_{a_\ell}$. Then the $u_\om$
for $\om\in S_n$ are well defined and form a free $\Z$-basis of
$\cN_n$. We denote the coefficient of $u_\om\in \cN_n$ in the
expansion of the element $\xi\in \cN_n$ by $\langle \xi,\om\rangle$;
in other words, $\xi = \sum_{\om\in S_n}\langle \xi,\om\rangle\,u_\om$,
for all $\xi\in \cN_n$.

Let $t$ be an indeterminate and define
\begin{gather*}
A_i(t) := (1+t u_{n-1})(1+t u_{n-2})\cdots 
(1+t u_i) \ ; \\
\tilde{A}_i(t) := (1-t u_i)(1-t u_{i+1})\cdots (1-t u_{n-1}).
\end{gather*}
Suppose that $Y=(y_1,y_2,\ldots)$ and $Z=(z_1,z_2,\ldots)$ are two
infinite sequences of commuting independent variables.  For any $\om
\in S_n$, the {\em double Schubert polynomial} $\AS_\om(Y,Z)$ of
Lascoux and Sch\"utzenberger \cite{LS, L} is given by the prescription
\begin{equation}
\label{dbleA}
\AS_\om(Y,Z) :=\left\langle 
\tilde{A}_{n-1}(z_{n-1})\cdots \tilde{A}_1(z_1)
A_1(y_1)\cdots A_{n-1}(y_{n-1}), \om\right\rangle.
\end{equation}
The polynomial $\AS_\om(Y):=\AS_\om(Y,0)$ is the single Schubert
polynomial. The definition (\ref{dbleA}) implies that $\AS_\om(Y)$
has nonnegative integer coefficients, which admit a combinatorial 
interpretation (compare with \cite[Thm.\ 1.1]{BJS}).

For each $n\geq 1$, there is an injective group homomorphism $i_n: S_n
\hookrightarrow S_{n+1}$, defined by adjoining the fixed point $n+1$,
and we let $S_\infty :=\cup_n S_n$.  The polynomials $\AS_\om(Y,Z)$
have an important {\em stability property} under the inclusions $i_n$
of the Weyl groups, namely, if $\om\in S_n$, then we
have $$\AS_{i_n(\om)}(Y,Z) = \AS_\om(Y,Z).$$ The stability property
implies that $\AS_\om(Y,Z)$ is well defined for all $\om \in S_\infty$.

For any $i\geq 1$, there are {\em right} and {\em left} divided
difference operators $\partial_i^y$ and $\partial_i^z$ which act on
the polynomial ring $\Z[Y,Z]$. We define an action of $S_\infty$ on
$\Z[Y,Z]$ by ring automorphisms by letting the simple transpositions
$s_i$ act by interchanging $y_i$ and $y_{i+1}$ and leaving all the
remaining variables fixed. Define $\partial_i^y$ on $\Z[Y,Z]$ by
\[
\partial_i^yf := \frac{f-s_if}{y_i-y_{i+1}}.
\]
Consider the ring involution $\omega:\Z[Y,Z]\to\Z[Y,Z]$ determined by
$\omega(y_j) = -z_j$ and $\omega(z_j) = -y_j$ for each $j$, and set
$\partial_i^z:=\omega\partial_i^y\omega$ for each $i\geq 1$.

Both the single and double Schubert polynomials may be characterized
by their compatibility with the divided difference operators. In fact,
the polynomials $\AS_\om$ for $\om\in S_{\infty}$ are the unique
family of elements of $\Z[Y,Z]$ satisfying the equations
\begin{equation}
\label{peqA}
\partial_i^y\AS_\om = \begin{cases}
\AS_{\om s_i} & \text{if $\ell(\om s_i)<\ell(\om)$}, \\ 
0 & \text{otherwise},
\end{cases}
\quad
\partial_i^z\AS_\om = \begin{cases}
\AS_{s_i\om} & \text{if $\ell(s_i\om)<\ell(\om)$}, \\ 
0 & \text{otherwise},
\end{cases}
\end{equation}
for all $i\geq 1$, together with the condition that the constant term
of $\AS_\om$ is $1$ if $\om=1$, and $0$ otherwise. This
characterization theorem is straightforward to prove directly from the
definition (\ref{dbleA}), following \cite[Thm.\ 2.2]{FS}.

The above result has two important consequences. The first is that 
$\AS_\om(Y,Z)$ is symmetric in $y_i$, $y_{i+1}$ (respectively in
$z_j$, $z_{j+1}$) if and only if $\om_i<\om_{i+1}$ (respectively
$\om^{-1}_j<\om^{-1}_{j+1}$). In other words, the descents of $\om$
and $\om^{-1}$ determine the symmetries of the polynomial
$\AS_\om(Y,Z)$. A second consequence is that the
double Schubert polynomials $\AS_\om(Y,Z)$ represent the universal
Schubert classes in type A flag bundles, and therefore degeneracy loci
of vector bundles, in the sense of \cite{Fu1}. 

Let $\om_0=(n,n-1,\ldots,1)$ be the longest element of $S_n$. 
According to \cite[Lemma 2.1]{FS}, for all commuting variables $s$, $t$ 
and indices $i$, we have $A_i(s)A_i(t) = A_i(t)A_i(s)$. Since 
$\tilde{A}_i(t)=A_i(t)^{-1}$, we also have $A_i(s)\tilde{A}_i(t)=
\tilde{A}_i(t)A_i(s)$. Fomin
and Stanley \cite[Cor.\ 4.4]{FS} use this fact and the definition 
(\ref{dbleA}) to show that
\begin{equation}
\label{prodeq}
\AS_{\om_0}(Y,Z) = \prod_{i+j \leq n} (y_i-z_j).
\end{equation}

\subsection{Schur polynomials}
\label{examps}

Following \cite{LS, M2}, the product in (\ref{prodeq}) may be written
in the form of a multi-Schur determinant. Furthermore, by applying
divided differences to $\AS_{\om_0}$ and using the equations
(\ref{peqA}), one can express more general Schubert polynomials
$\AS_\om$ as Schur type determinants. We will not reprove these
formulas here, but we do need some more notation to recall the ones
that we will require.

For any integer $j\geq 0$, define the
elementary and complete symmetric functions 
$e_j(Y)$ and $h_j(Y)$ by the generating series
\[
\prod_{i=1}^{\infty}(1+y_it) = \sum_{j=0}^{\infty}
e_j(Y)t^j \ \ \ \text{and} \ \ \ 
\prod_{i=1}^{\infty}(1-y_it)^{-1} = \sum_{j=0}^{\infty}
h_j(Y)t^j,
\]
respectively. We define the supersymmetric functions $h_p(Y/Z)$ for
$p\in \Z$ by the generating function equation
\[
\sum_{p=0}^\infty h_p(Y/Z)t^p = \left(\sum_{j=0}^{\infty}h_j(Y)t^j\right)
\left(\sum_{j=0}^{\infty}e_j(Z)(-t)^j\right).
\]
If $r\geq 1$ then we let $e^r_j(Y):=e_j(y_1,\ldots,y_r)$ 
and $h^r_j(Y):=h_j(y_1,\ldots,y_r)$ denote the polynomials
obtained from $e_j(Y)$ and $h_j(Y)$ by setting $y_j=0$ for all $j>r$. Let 
$e^0_j(Y)=h^0_j(Y)=\delta_{0j}$, where $\delta_{0j}$ denotes the
Kronecker delta, and for $r<0$, define $h^r_j(Y):=e^{-r}_j(Y)$
and $e^r_j(Y):=h^{-r}_j(Y)$.

We will work with integer sequences $\al = (\al_1,\al_2,\ldots)$ which
are assumed to have finite support when they appear as subscripts.
The sequence $\al$ is a {\em composition} if $\al_i\geq 0$ for all
$i$, and a {\em partition} if $\al_i\geq \al_{i+1}\geq 0$ for all
$i\geq 1$. We set $|\al|:=\sum_i\al_i$. Partitions are traditionally
identified with their Young diagram of boxes, and this is used to 
define the inclusion relation $\mu\subset\la$ between two partitions
$\mu$ and $\la$.

Given an integer sequence $\al$, we define 
$s_\al(Y/Z)$ by the determinantal equation
\begin{equation}
\label{deteq}
s_\al(Y/Z):= \det(h_{\al_i+j-i}(Y/Z))_{i,j}.
\end{equation}
Notice that the matrix $\{h_{\al_i+j-i}(Y/Z)\}_{i,j}$ is upper
unitriangular for $i$ and $j$ sufficiently large, so the determinant
in (\ref{deteq}) is well defined. When $\al=\la$ is a partition, then
$s_\la(Y/Z)$ is called a {\em supersymmetric Schur function}.  The
usual Schur $S$-function $s_\la(Y)$ satisfies
$s_\la(Y):=s_{\la}(Y/Z)\vert_{Z=0}$. Moreover, we have
$s_\la(0/Z)=s_\la(Y/Z)\vert_{Y=0}=(-1)^{|\la|}s_{\la'}(Z)$, where
$\la'$ denotes the conjugate (or transpose) partition of $\la$.
Observe that $s_\la(y_1,\ldots,y_m)=0$ if the {\em length} $\ell(\la)$,
that is, the number of non-zero parts $\la_i$, is greater than $m$.

For any positive integers $i<j$ and integer sequence $\alpha$, define
the Young raising operator $R_{ij}$ by $R_{ij}(\alpha) :=
(\alpha_1,\ldots,\alpha_i+1,\ldots,\alpha_j-1, \ldots)$. Using these
operators, the equation (\ref{deteq}) may be rewritten as
\[
s_\al(Y/Z) = \prod_{i<j}(1-R_{ij}) \, h_\al(Y/Z),
\]
where $h_\al:= \prod_i h_{\al_i}$ and each operator $R_{ij}$ acts on
the expression $h_\al$ (regarded as a noncommutative monomial) by the
prescription $R_{ij} h_\al:= h_{R_{ij}\al}$. See \cite{T4} for more
information on raising operators.

A permutation
$\om\in S_\infty$ is {\em Grassmannian} if there exists an $m\geq 1$
such that $\om_i<\om_{i+1}$ for all $i\neq m$.  The {\em shape} of
such a Grassmannian permutation $\om$ is the partition
$\la=(\la_1,\ldots, \la_m)$ with $\la_{m+1-j}=\om_j-j$ for $1\leq j
\leq m$. If $\om\in S_\infty$ is a Grassmannian permutation with a unique
descent at $m$ and shape $\la$, then we have
\begin{equation}
\label{StoS}
\AS_\om(Y)=s_\la(y_1,\ldots,y_m). 
\end{equation}
A short proof of (\ref{StoS}) starting from the formula (\ref{prodeq}) for
$\AS_{\om_0}(Y)$ is in \cite[(4.8)]{M2}.

\subsection{Stanley symmetric functions and splitting formulas}
\label{ssfsfsA}

Given any permutations $u_1,\ldots,u_p,\om$, we will write
$u_1\cdots u_p=\om$ if $\ell(u_1)+\cdots + \ell(u_p)=\ell(\om)$ and the
product of $u_1,\ldots,u_p$ is equal to $\om$. In this case we say that
$u_1\cdots u_p$ is a {\em reduced factorization} of $\om$. Equation
(\ref{dbleA}) impies the relation
\begin{equation}
\label{dbleA2}
\AS_\om(Y,Z) = \sum_{uv=\om}\AS_{u^{-1}}(-Z)\AS_v(Y)
\end{equation}
summed over all reduced factorizations $uv=\om$ in $S_\infty$.

If $A(Y):=A_1(y_1)A_1(y_2)\cdots$, then the function $G_\om(Y)$
defined for $\om\in S_n$ by
\[
G_\om(Y) := \langle A(Y), \om\rangle 
\]
is symmetric in $Y$. $G_\om$ is the type A {\em Stanley symmetric
  function}, which was introduced in \cite{S}.\footnote{In Stanley's
  paper, the function $G_{\om^{-1}}$ is assigned to $\om$.}  If
$\tilde{A}(Z):=\tilde{A}_1(z_1)\tilde{A}_1(z_2)\cdots$, then we define
the {\em double Stanley symmetric function} $G_\om(Y/Z)$ by
\[
G_\om(Y/Z) := \langle \tilde{A}(Z)A(Y), \om\rangle =
\sum_{uv=\om}G_{u^{-1}}(-Z)G_v(Y)
\]
with the sum over all reduced factorizations $uv=\om$ in $S_\infty$.

Given $m\geq 1$ and any $\om \in S_n$, the permutation $1_m\times
\om\in S_{m+n}$ is defined by $(1_m\times \om)(j)=j$ for $1\leq j \leq
m$ and $(1_m\times \om)(j)= m+\om(j-m)$ for $j>m$.  We say that a
permutation $\om$ is {\em increasing up to $m$} if $\om(1)<\om(2) <
\cdots < \om(m)$. If $\om$ is increasing up to $m$, then we have the
following key identity:
\begin{equation}
\label{keyid}
\AS_\om(Y)= \sum_{v(1_m\times u) = \om}G_v(y_1,\ldots,y_m)\AS_u(y_{m+1},y_{m+2},\ldots)
\end{equation}
where the sum is over all reduced factorizations $v(1_m\times u) = \om$
in $S_\infty$.  Equation (\ref{keyid}) admits a 
double version: let $G_v(Y_{(m)}/Z_{(\ell)})$ denote the polynomial
obtained from $G_v(Y/Z)$ by setting $y_i=z_j=0$ for 
all $i>m$ and $j>\ell$. Then if $\om$ is increasing up to $m$ and 
$\om^{-1}$ is increasing up to $\ell$, we have
\begin{equation}
\label{keyid2}
\AS_\om(Y,Z)= 
\sum \AS_{u^{-1}}(-Z_{>\ell}) G_v(Y_{(m)}/Z_{(\ell)})\AS_{u'}(Y_{>m}),
\end{equation}
where $Y_{>m}:=(y_{m+1},y_{m+2},\ldots)$,
$-Z_{>\ell}:=(-z_{\ell+1},-z_{\ell+2},\ldots)$, and the sum is over
all reduced factorizations $(1_{\ell}\times u)v(1_m\times u') = \om$ in
$S_\infty$. Equations (\ref{keyid}) and (\ref{keyid2}) are easy to show 
directly from the definitions of $\AS_\om(Y,Z)$ and $G_v(Y/Z)$ (see 
\cite[\S 1.4]{T5} for a detailed proof of (\ref{keyid}); the proof of 
(\ref{keyid2}) is similar).

We say that a permutation $\om\in S_\infty$ is {\em compatible} with
the sequence $\fraka \, :\, a_1 < \cdots < a_p$ of positive integers
if all descent positions of $\om$ are contained in $\fraka$.  Let
$\frakb \, :\, b_1 < \cdots <b_q$ be a second sequence of positive
integers and assume that $\om$ is compatible with $\fraka$ and
$\om^{-1}$ is compatible with $\frakb$. We say that a reduced
factorization $u_1\cdots u_{p+q-1} = \om$ is {\em compatible} with
$\fraka$, $\frakb$ if $u_j(i)=i$ whenever $j<q$ and $i\leq b_{q-j}$ or
whenever $j>q$ and $i \leq a_{j-q}$ (and where we set $u_j(0)=0$).
Set $Y_i := \{y_{a_{i-1}+1},\ldots,y_{a_i}\}$ for each $i\geq 1$ and
$Z_j := \{z_{b_{j-1}+1},\ldots,z_{b_j}\}$ for each $j\geq 1$.

\begin{prop}[\cite{BKTY}]
\label{split1A}
Suppose that $\om$ and $\om^{-1}$ are compatible with $\fraka$ and 
$\frakb$, respectively. Then the
Schubert polynomial $\AS_\om(Y,Z)$ satisfies
\[
\AS_\om = \sum
G_{u_1}(0/Z_q)\cdots G_{u_{q-1}}(0/Z_2)
G_{u_q}(Y_1/Z_1)G_{u_{q+1}}(Y_2) \cdots G_{u_{p+q-1}}(Y_p)
\]
summed over all reduced factorizations $u_1\cdots u_{p+q-1} = \om$
compatible with $\fraka$, $\frakb$. 
\end{prop}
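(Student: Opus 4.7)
The plan is to combine a single application of the double key identity (\ref{keyid2}) with iterated applications of the single key identity (\ref{keyid}) on each of the two residuals. Induction on $p+q$ is natural, with the single-variable cases ($p=1$ or $q=1$) handled by pure iteration of (\ref{keyid}).

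Compatibility of $\om$ with $\fraka$ forces $\om$ to be increasing up to $m=a_1$, and similarly $\om^{-1}$ is increasing up to $\ell=b_1$. Applying (\ref{keyid2}) with these values, and simplifying $G_v(Y_{(a_1)}/Z_{(b_1)})=G_v(Y_1/Z_1)$, yields
\[
\AS_\om(Y,Z)=\sum \AS_{u^{-1}}(-Z_{>b_1})\,G_v(Y_1/Z_1)\,\AS_{u'}(Y_{>a_1}),
\]
summed over reduced factorizations $(1_{b_1}\times u)\,v\,(1_{a_1}\times u')=\om$. This $G_v(Y_1/Z_1)$ will play the role of the central factor $G_{u_q}(Y_1/Z_1)$ in the claimed formula.

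For the $Y$-tail, a descent of $u'$ at position $d$ induces a descent of $\om$ at position $a_1+d$, so compatibility of $\om$ with $\fraka$ forces the descents of $u'$ to lie in the shifted sequence $\fraka'=(a_2-a_1,\ldots,a_p-a_1)$. Iterating (\ref{keyid}) one block at a time on $\AS_{u'}(Y_{>a_1})$ then produces
\[
\AS_{u'}(Y_{>a_1})=\sum G_{u_{q+1}}(Y_2)\cdots G_{u_{p+q-1}}(Y_p),
\]
after invoking the stability $G_{1_a\times\sigma}=G_\sigma$ of the Stanley symmetric function to relabel each quotient permutation as the corresponding $u_{q+j}$. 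A parallel argument handles $\AS_{u^{-1}}(-Z_{>b_1})$: the descents of $u^{-1}$ lie in $\frakb'=(b_2-b_1,\ldots,b_q-b_1)$, and iterating (\ref{keyid}) with $-Z_{>b_1}$ in the role of $Y$ gives a product of Stanley factors $G_{w_j}(-Z_{j+1})$. Converting via $G_w(0/Z)=G_{w^{-1}}(-Z)$ (immediate from setting $Y=0$ in the definition of $G_w(Y/Z)$) and matching through the identification $u_{q-j}=w_j^{-1}$ yields the tail $G_{u_1}(0/Z_q)\cdots G_{u_{q-1}}(0/Z_2)$.

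Substituting the two tail expansions into the first identity produces the claimed formula. The main obstacle is the combinatorial reconciliation: one must verify that the triples $(u,v,u')$ from the outer split, together with the internal decompositions of $u^{-1}$ and $u'$, correspond bijectively to reduced factorizations $u_1\cdots u_{p+q-1}=\om$ satisfying the fixity conditions $u_j(i)=i$ for $i\leq b_{q-j}$ (when $j<q$) and $i\leq a_{j-q}$ (when $j>q$). This amounts to unwinding the semidirect-product structure $u_j=1_{c_j}\times\hat u_j$ with $c_j=b_{q-j}$ or $a_{j-q}$, together with tracking the index reversal in the $Z$-tail induced by the order-reversing inversion $u^{-1}\leftrightarrow u$.
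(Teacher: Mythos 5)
Your proof is correct and matches the paper's own (one-line) argument: apply the double key identity (\ref{keyid2}) once to split off the central block, then iterate (\ref{keyid}) on each tail. The descent-propagation observation you make (a descent of a right factor at $r$ forces a descent of $\om$ at $r$, since lengths add in a reduced factorization) and the conversion $G_w(0/Z)=G_{w^{-1}}(-Z)$ are exactly the supporting facts needed, and the remaining bookkeeping you flag is routine.
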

\begin{proof}
The result follows easily by using (\ref{keyid2}) and iterating the 
identity (\ref{keyid}).
\end{proof}

When the Stanley symmetric function $G_\om$ is expanded in the basis
of Schur functions, one obtains a formula
\begin{equation}
\label{Geq}
G_\om(Y) = \sum_{\la\, :\, |\la| = \ell(\om)}a^\om_\la s_\la(Y)
\end{equation}
for some nonnegative integers $a^\om_\la$. There exist several different
combinatorial interpretations of these coefficients, for instance
using the {\em transition trees} of Lascoux and Sch\"utzenberger (see 
for example \cite[(4.37)]{M2}). One also knows that 
$a_\la^\om = a^{\om^{-1}}_{\la'}$.

\begin{thm}[\cite{BKTY}, Thm.\ 4]
\label{Asplitthm}
Suppose that $\om$ is compatible with $\fraka$ and $\om^{-1}$ is compatible
with $\frakb$. Then we have
\begin{equation}
\label{asplit}
\AS_\om = \sum_{\underline{\la}} a^\om_{\underline{\la}}\,
s_{\la^1}(0/Z_q)\cdots s_{\la^{q-1}}(0/Z_2)
s_{\la^q}(Y_1/Z_1)s_{\la^{q+1}}(Y_2) \cdots s_{\la^{p+q-1}}(Y_p)
\end{equation}
summed over all sequences of partitions
$\underline{\la}=(\la^1,\ldots,\la^{p+q-1})$, where 
\[
a^\om_{\underline{\la}} := \sum_{u_1\cdots u_{p+q-1} = \om}
a_{\la^1}^{u_1}\cdots a_{\la^{p+q-1}}^{u_{p+q-1}},
\]
summed over all reduced factorizations $u_1\cdots u_{p+q-1} = \om$
compatible with $\fraka$, $\frakb$.
\end{thm}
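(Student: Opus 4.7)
The plan is to apply Proposition~\ref{split1A} to write $\AS_\om$ as a sum over compatible reduced factorizations of a product of Stanley symmetric functions, then expand each factor as a $\Z$-linear combination of Schur polynomials and reorganize. By Proposition~\ref{split1A}, we have
\[
\AS_\om = \sum G_{u_1}(0/Z_q)\cdots G_{u_{q-1}}(0/Z_2)\,G_{u_q}(Y_1/Z_1)\,G_{u_{q+1}}(Y_2)\cdots G_{u_{p+q-1}}(Y_p),
\]
summed over compatible reduced factorizations $u_1\cdots u_{p+q-1}=\om$. For the pure-$Y$ factors $G_{u_{q+j}}(Y_{j+1})$, equation (\ref{Geq}) provides the Schur expansion directly. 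The factors $G_{u_i}(0/Z_{q+1-i})$ and $G_{u_q}(Y_1/Z_1)$ require the supersymmetric version
\begin{equation*}
G_\om(Y/Z) = \sum_\lambda a^\om_\lambda\, s_\lambda(Y/Z),
\end{equation*}
which specializes at $Y=0$ to $G_\om(0/Z)=\sum_\lambda a^\om_\lambda\, s_\lambda(0/Z)$.

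The hard part will be justifying this supersymmetric expansion. I would start from the definition $G_\om(Y/Z)=\sum_{uv=\om}G_{u^{-1}}(-Z)G_v(Y)$ and, using (\ref{Geq}) on each factor together with the relation $G_{u^{-1}}(-Z)=G_u(0/Z)$ (immediate from the defining series at $Y=0$), rewrite the right-hand side in the bilinear form
\[
G_\om(Y/Z)=\sum_{\mu,\nu}\Big(\sum_{uv=\om}a^u_\mu a^v_\nu\Big)\,s_\mu(0/Z)\,s_\nu(Y).
\]
The proposed sum $\sum_\lambda a^\om_\lambda s_\lambda(Y/Z)$ becomes the same bilinear form via the identity $s_\lambda(Y/Z)=\sum_\mu s_{\lambda/\mu}(Y)s_\mu(0/Z)$ combined with the Littlewood--Richardson rule $s_{\lambda/\mu}(Y)=\sum_\nu c^\lambda_{\mu\nu}\,s_\nu(Y)$. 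Equating the two bilinear expressions reduces the identity to $\sum_{uv=\om}a^u_\mu a^v_\nu=\sum_\lambda a^\om_\lambda\,c^\lambda_{\mu\nu}$, which follows from the concatenation identity $G_\om(Y_1,Y_2)=\sum_{uv=\om}G_u(Y_1)G_v(Y_2)$ (a direct consequence of the factorization $A(Y_1,Y_2)=A(Y_1)A(Y_2)$ in the nilCoxeter algebra) by comparing Schur expansions of both sides.

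Once the supersymmetric expansion is established, substituting all Schur expansions into the formula from Proposition~\ref{split1A} and interchanging the order of summation so that the tuple $\underline{\la}=(\la^1,\ldots,\la^{p+q-1})$ is held fixed first yields the coefficient $\sum_{u_1\cdots u_{p+q-1}=\om}a^{u_1}_{\la^1}\cdots a^{u_{p+q-1}}_{\la^{p+q-1}}=a^\om_{\underline{\la}}$ in front of $s_{\la^1}(0/Z_q)\cdots s_{\la^{p+q-1}}(Y_p)$, which is precisely (\ref{asplit}). Apart from the supersymmetric expansion, every step is routine bookkeeping.
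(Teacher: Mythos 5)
Your proposal is correct and follows the same route as the paper: apply Proposition~\ref{split1A} and then expand each Stanley symmetric function factor via (\ref{Geq}). The paper's one-line proof tacitly uses the supersymmetric expansion $G_\om(Y/Z)=\sum_\la a^\om_\la s_\la(Y/Z)$, which you supply with a clean derivation from the concatenation identity $G_\om(Y_1,Y_2)=\sum_{uv=\om}G_u(Y_1)G_v(Y_2)$ and the Littlewood--Richardson rule; that added detail is sound and fills in exactly the step the paper leaves implicit.
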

\begin{proof}
The result follows from Proposition \ref{split1A} by using 
the equation (\ref{Geq}).
\end{proof}

Equation (\ref{asplit}) generalizes the monomial positivity of the
Schubert polynomial $\AS_\om(Y,Z)$ from \cite{BJS, FS} to a
combinatorial formula which manifestly respects the descent sets of
$\om$ and $\om^{-1}$, and therefore exhibits the symmetries of
$\AS_\om$.  Moreover, the splitting formula is uniquely determined
once $\om$ and the compatible sequences $\fraka$ and $\frakb$ are
specified. The main geometric application of equation (\ref{asplit})
is that it directly implies a corresponding Chern class formula for
the type A degeneracy locus indexed by $\om$, with the symmetries
native to the partial flag variety associated to $\fraka$. For more
details on this, as well as examples of explicit computations of the
{\em splitting coefficients} $a^\om_{\underline{\la}}$, see \cite[\S
  4]{BKTY} and \cite[\S 4 and \S 6]{T6}.

\section{Flagged Schur polynomials and duality}
\label{fspdual}

\subsection{Flagged Schur polynomials}
\label{fsp}

Let $\al=\{\al_j\}_{1\leq j\leq \ell}$, $\be=\{\be_j\}_{1\leq j\leq
  \ell}$, and $\rho=\{\rho_j\}_{1\leq j\leq \ell}$ be three integer
sequences, and let $t=(t_1,t_2,\ldots)$ be a sequence of 
independent variables. The Schur type determinant
\[
S_{\al/\be}^\rho(h(t)):= 
\det \left(h^{\rho_i}_{\al_i-\be_j+j-i}(t)\right)_{1\leq i,j \leq \ell}
\]
is called a {\em flagged Schur polynomial}.
We define the polynomial $S_{\al/\be}^\rho(e(t))$ in a similar way.
The method of Gessel and Viennot \cite{GV} or Wachs \cite{Wa} shows that
for any partition $\la$ and increasing composition $\rho$, we have 
\begin{equation}
\label{monomialsum}
S^\rho_\la(h(t)) = \sum t^U
\end{equation}
where the sum is over all column strict Young tableaux $U$ of shape
$\la$ whose entries in the $i$-th row are $\leq \rho_i$ for all $i\geq 1$.

The following well known result will be used in \S \ref{1stsec}.

\begin{lemma}
\label{goodlem}
Let $\la$ and $\mu$ be two partitions of length at most $\ell$ with 
$\max(\la_1,\mu_1)\leq k$. Then we have
\begin{equation}
\label{goodeq}
\det\left(h_{\la_i-\mu_j+j-i}^{k+i-\la_i}(t)\right)_{1\leq i,j\leq \ell} = 
\det\left(e_{\la'_i-\mu'_j+j-i}^{k+\la'_i-i}(t)\right)_{1\leq i,j\leq k} .
\end{equation}
\end{lemma}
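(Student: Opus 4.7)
The plan is to interpret both sides of \eqref{goodeq} combinatorially as generating functions for the same set of column-strict skew tableaux of shape $\lambda/\mu$. Since $\lambda$ is a partition, the flag $\rho_i := k + i - \lambda_i$ is strictly increasing in $i$, so by the skew version of equation \eqref{monomialsum} (a standard consequence of the Lindstr\"om--Gessel--Viennot lemma, cf.\ \cite{GV, Wa}), the left-hand side equals $\sum_T t^T$ summed over column-strict tableaux $T$ of shape $\lambda/\mu$ satisfying $T(i,j) \le k + i - \lambda_i$ for every box $(i,j)$; call this condition~(a).

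A dual LGV argument applied to the right-hand side shows that $\det\bigl(e^{\sigma_i}_{\lambda'_i - \mu'_j + j - i}\bigr)_{1\le i,j\le k}$ with $\sigma_i = k + \lambda'_i - i$ also equals $\sum_T t^T$, but now summed over column-strict tableaux $T$ of the doubly-conjugate shape $(\lambda')'/(\mu')' = \lambda/\mu$ satisfying the column-indexed bound $T(i,j) \le k + \lambda'_j - j$; call this condition~(b). Both sides are thus generating functions of column-strict tableaux of shape $\lambda/\mu$, and it remains to verify that conditions~(a) and~(b) cut out the same set.

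For (a)$\Rightarrow$(b): if $(i,j)\in\lambda$, then $\lambda_i\ge j$ and $\lambda'_j\ge i$, so $\lambda_i + \lambda'_j \ge i+j$, which rearranges to $k+i-\lambda_i \le k+\lambda'_j-j$; thus (a)'s bound is pointwise tighter than (b)'s. Conversely, suppose $T$ satisfies~(b) and fix a row index $i$ with $\mu_i < \lambda_i$. Applying (b) to the bottom entry $T(\lambda'_{\lambda_i},\lambda_i)$ of column $\lambda_i$ gives $T(\lambda'_{\lambda_i},\lambda_i)\le k+\lambda'_{\lambda_i}-\lambda_i$, and since the entries of $T$ strictly increase down column $\lambda_i$, we deduce $T(i,\lambda_i) \le T(\lambda'_{\lambda_i},\lambda_i) - (\lambda'_{\lambda_i} - i) \le k+i-\lambda_i$. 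The weak row increase of $T$ then propagates this bound across row $i$, establishing~(a).

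The main obstacle is the combinatorial interpretation of the $e$-determinant in the second paragraph: although $\sigma_i = k + \lambda'_i - i$ is strictly \emph{decreasing} in $i$, the dual LGV argument (reading the natural strictly-increasing-subscript paths for $e$ in reverse, or equivalently reindexing rows by $i \mapsto k+1-i$) still produces a clean tableau model for the conjugate shape, with the column bounds described above. Once this is granted, the equivalence (a)$\Leftrightarrow$(b) is a direct application of the semistandard axioms, and \eqref{goodeq} follows.
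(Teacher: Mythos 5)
Your proof is correct, and it takes a genuinely different route from the paper's. The paper argues algebraically, following Macdonald: with $N = k+\ell$, the $N\times N$ matrices $A = (h_{i-j}^{-i}(-t))$ and $B = (h_{i-j}^{j+1}(t))$ are mutually inverse, so Jacobi's complementary minor identity says each minor of $A$ equals the complementary cofactor of $B^t$; choosing the minor of $A$ with row indices $\lambda'_i + k - i$ and column indices $\mu'_j + k - j$ produces (\ref{goodeq}) immediately. You instead interpret each side as a tableau generating function via Lindstr\"om--Gessel--Viennot and exhibit an equality of tableau sets. That works, and your equivalence of conditions (a) and (b) is a clean and illuminating observation: the complementary flags $k + i - \lambda_i$ and $k + \lambda'_j - j$ agree exactly at the inner corners $(i,\lambda_i)$ of the diagram, and the column-strict/row-weak axioms propagate the bound inward from there — this explains \emph{why} the identity holds, which the matrix-inverse argument does not reveal. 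The one step you rightly flag as delicate is the dual LGV reading of the $e$-determinant: that $\det(e^{\sigma_i}_{\lambda'_i - \mu'_j + j - i})$, with the strictly decreasing flag $\sigma_i = k + \lambda'_i - i$, models column-strict tableaux of shape $\lambda/\mu$ with column $j$ entries bounded by $\sigma_j$. This is true and is essentially Wachs's column-flagged Schur function result in \cite{Wa}, so it is available, but your phrase "read the paths in reverse, or reindex $i \mapsto k+1-i$" undersells what is needed — the reindexed shape is a reverse partition and the path setup is not quite the one behind (\ref{monomialsum}) — so a self-contained write-up would need either a direct LGV verification for the decreasing flag or a precise citation to Wachs. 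In short, the paper's proof is shorter and purely formal; yours is longer but makes the combinatorics visible.
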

\begin{proof}
The argument follows the one in \cite[I.2, eq.\ (2.9)]{M1}. Let
$N:=k+\ell$ and define the matrices 
\[
A:=\left(h_{i-j}^{-i}(-t)\right)_{0\leq i,j \leq N-1} \ \ \text{and} \ \ 
B:=\left(h_{i-j}^{j+1}(t)\right)_{0\leq i,j \leq N-1}\,.
\]
It is well known that $A$ and $B$ are inverse to each other (see for
example \cite[I.3, Ex.\ 21]{M1}). Therefore each minor of $A$ is equal
to the complementary cofactor of $B^t$, the transpose of $B$. For the
minor of $A=(e_{i-j}^i(-t))$ with row indices $\la_i'+k-i$ ($1\leq
i\leq k$) and column indices $\mu_j'+k-j$ ($1\leq j \leq k$), the
complementary cofactor of $B^t=(h_{j-i}^{i+1}(t))$ has row indices
$k-1+i-\la_i$ ($1\leq i \leq \ell$) and column indices $k-1+j-\mu_j$
($1\leq j \leq \ell$). The equality (\ref{goodeq}) follows by taking
determinants.
\end{proof}

\subsection{The duality involution}

For any $r\geq 1$, let $\delta_r$ denote the partition $(r,r-1,\ldots, 1)$. 
Let $Y_{(n)}:=(y_1,\ldots,y_n)$ and $I_n\subset \Z[Y_{(n)}]$ be the ideal
generated by the elementary symmetric polynomials $e_i(Y_{(n)})$ for 
$1\leq i\leq n$. Set $H_n:=\Z[Y_{(n)}]/I_n$ and let $\cH_n$ be the 
$\Z$-linear subspace of $\Z[Y_{(n)}]$ spanned by the monomials 
$y^\al:=y_1^{\al_1}\cdots y_{n-1}^{\al_{n-1}}$ for all compositions
$\al\leq \delta_{n-1}$. 

Let ${\mathfrak Y}=\GL_n/B$ be the variety which parametrizes
complete flags $$E_\bull\ :\ 0\subsetneq E_1\subsetneq \cdots
\subsetneq E_n=\C^n$$ of subspaces of $\C^n$. It is well known that the
inclusion $\cH_n\subset \Z[Y_{(n)}]$ induces an isomorphism of abelian
groups $\cH_n\stackrel{\sim}\to H_n$, and that $H_n$ is
isomorphic to the cohomology ring of the flag variety ${\mathfrak
  Y}$. Moreover, the Schubert polynomials $\AS_\om(Y)$ for $\om \in
S_n$ form a $\Z$-basis of $\cH_n$, and represent the Schubert classes
in ${\mathfrak Y}$ under the above isomorphism (see for example \cite[\S
  10]{Fu3}).

Define an involution $\om\mapsto \om^*$ of $S_n$ by setting
$\om^*:=\om_0 \om \om_0$, and define the {\em duality involution}
$*:\cH_n\to\cH_n$ to be the $\Z$-linear map determined by
$*\,\AS_\om(Y) := \AS_{\om^*}(Y)$, for each $\om\in S_n$. We let
$\Z[Y_{(n)}]\to \Z[Y_{(n)}]$ be the ring involution given by
$(y_1,\ldots,y_n)\mapsto (-y_n,\ldots,-y_1)$, which descends to a ring
involution $D:H_n \to H_n$. The following result states that $*$ and
$D$ are mutually compatible.

\begin{lemma}
  \label{Dcomp}
For any permutation $\om\in S_n$, we have
\begin{equation}
\label{DAeq}
D(\AS_\om(Y)+I_n) = \AS_{\om^*}(Y)+I_n 
\end{equation}
in the quotient ring $H_n$.
\end{lemma}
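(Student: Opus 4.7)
The plan is to reduce the lemma to the single case $\om=\om_0$ by a divided-difference induction, exploiting the fact that conjugation by $D$ relabels the generators $\partial_i^y$ via $i\leftrightarrow n-i$. The characterization (\ref{peqA}) of Schubert polynomials by divided differences is the main tool throughout.

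First I would check that $D$ and each $\partial_i^y$ descend to $H_n$. Since $D(e_k(Y_{(n)}))=(-1)^k e_k(Y_{(n)})$, the ideal $I_n$ is $D$-stable; and since each $e_k$ is $S_n$-symmetric, $\partial_i^y(e_k)=0$, which via the twisted Leibniz rule gives $\partial_i^y(I_n)\subset I_n$. A direct computation on the variables $y_j$ then produces the conjugation identity
\[
D\,\partial_i^y\,D^{-1}=\partial_{n-i}^y\qquad (1\le i\le n-1),
\]
equivalently $\partial_i^y\circ D=D\circ \partial_{n-i}^y$ on $\Z[Y_{(n)}]$ and hence on $H_n$.

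With this in hand, fix a reduced word $s_{i_1}\cdots s_{i_N}$ of $\om_0$ and set $\om_j:=\om_0 s_{i_1}\cdots s_{i_j}$, so that iterating (\ref{peqA}) gives $\AS_{\om_j}=\partial_{i_j}^y\cdots\partial_{i_1}^y\AS_{\om_0}$. Applying $D$ and using the conjugation identity yields, in $H_n$,
\[
D(\AS_{\om_j})=\partial_{n-i_j}^y\cdots\partial_{n-i_1}^y\,D(\AS_{\om_0}).
\]
Because $s_{n-i}=\om_0 s_i\om_0$, the partial product $\om_0\,s_{n-i_1}\cdots s_{n-i_r}$ equals $s_{i_1}\cdots s_{i_r}\om_0=\om_r^*$, and each right multiplication by $s_{n-i_r}$ drops the length by one. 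Thus, granting the base case $D(\AS_{\om_0})\equiv\AS_{\om_0}\pmod{I_n}$, a second application of (\ref{peqA}) identifies the right-hand side with $\AS_{\om_j^*}$. Since every element of $S_n$ appears as some $\om_j$, the lemma follows.

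The main obstacle is the base case. The top-degree summand of $H_n$ is one-dimensional and spanned by $\AS_{\om_0}$, so $D(\AS_{\om_0})=c\,\AS_{\om_0}$ in $H_n$ for some $c\in\Z$. To pin down $c$, I would use the linear functional on $H_n$ given by $f\mapsto \partial_{i_1}^y\cdots\partial_{i_N}^y f$; by iteration of (\ref{peqA}) it sends $\AS_{\om_0}$ to $1$. The conjugation identity, together with the fact that $s_{n-i_1}\cdots s_{n-i_N}=\om_0\, s_{i_1}\cdots s_{i_N}\,\om_0=\om_0$ is another reduced word for $\om_0$ and that divided-difference compositions depend only on the underlying element, shows that this functional commutes with $D$. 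Since $D$ fixes the constants, one obtains $c=\partial_{i_1}^y\cdots\partial_{i_N}^y(D\AS_{\om_0})=D(1)=1$, completing the base case and hence the proof.
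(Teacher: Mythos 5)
Your argument is correct in substance, but it takes a genuinely different route from the paper's proof. The paper works directly inside the nilCoxeter algebra: it starts from the relation $A_1(y_1)\cdots A_1(y_n)\equiv 1$ modulo the ideal generated by the $e_k(Y_{(n)})$, rewrites one side in terms of the $B_i$'s, and reads off the coefficient of $u_\om$ to get (\ref{DAeq}) in one stroke, with no need for a base case or an induction. Your proof instead uses the divided-difference characterization (\ref{peqA}): you establish the conjugation identity $D\,\partial_i^y\,D^{-1}=\partial_{n-i}^y$ on $H_n$, use it to propagate the statement down from $\om_0$, and then settle the base case $D(\AS_{\om_0})\equiv\AS_{\om_0}$ by a Poincaré-duality-style argument (the top degree of $H_n$ is rank one, and the operator $\partial_{\om_0}$ commutes with $D$ because both the given reduced word for $\om_0$ and its image under $i\mapsto n-i$ are reduced words for $\om_0$, so $\partial_{\om_0}$ is unchanged). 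This is a clean, self-contained cohomological argument; the paper's is shorter but leans on the nilCoxeter machinery already in place. One small repair: as written, you fix a single reduced word $s_{i_1}\cdots s_{i_N}$ for $\om_0$ and only produce the $N+1$ elements $\om_j=\om_0 s_{i_1}\cdots s_{i_j}$, which do not exhaust $S_n$ for $n>3$. The intended fix is standard and immediate: for an arbitrary $\om\in S_n$, choose a reduced word $s_{i_1}\cdots s_{i_r}$ for $\om_0\om$, so that $\om=\om_0 s_{i_1}\cdots s_{i_r}$ is a reduced factorization, and run the same chain of divided differences along it. With that adjustment the proof is complete.
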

\begin{proof}
Modulo the defining 
relations for the ideal $I_n$ of $\Z[Y_{(n)}]$, we have
\begin{equation}
\label{Aeq}
A_1(y_1)A_1(y_2)\cdots A_1(y_n) \equiv 1.
\end{equation}
Set 
\[
B_i(t):=(1+tu_{n-i})(1+tu_{n-i-1}t)\cdots (1+tu_1)
\]
so that 
\[
B_i(t)^{-1}=(1-tu_1)(1-tu_2)\cdots (1-tu_{n-i}).
\]
Observe that (\ref{Aeq}) may be written as
\[
A_1(y_1)A_2(y_2)\cdots A_{n-1}(y_{n-1})\cdot
B_{n-1}(y_2)B_{n-2}(y_3)\cdots B_1(y_n) \equiv 1
\]
or equivalently
\[
A_1(y_1)A_2(y_2)\cdots A_{n-1}(y_{n-1}) \equiv
B_1(y_n)^{-1} B_2(y_{n-1})^{-1}\cdots B_{n-1}(y_2)^{-1}.
\]
Notice that $(s_i)^*= s_{n-i}$ for each simple transposition $s_i$ in $S_n$.
It follows that for any $\om\in S_n$, we have
\begin{align*}
\AS_\om(Y) &= \langle A_1(y_1)A_2(y_2)\cdots A_{n-1}(y_{n-1}),\om  \rangle \\
&\equiv \langle B_1(y_n)^{-1} B_2(y_{n-1})^{-1}\cdots B_{n-1}(y_2)^{-1},\om\rangle
= D\, \AS_{\om^*}(Y),
\end{align*}
as required.
\end{proof}

Geometrically, the above involutions correspond to the duality
isomorphism ${\mathfrak Y}\to {\mathfrak Y}$ which sends each complete
flag $E_\bull$ in $\C^n$ to the dual flag $E'_\bull$ in
$(\C^n)^*$. Here the subspace $E'_i$ is defined as the kernel of the
canonical linear map $(\C^n)^*\to E^*_{n-i}$, for $1\leq i \leq
n$. For more details on this, see \cite[\S 10, Exercise 13]{Fu3}.

\begin{example}
One can prove the equality (\ref{DAeq}) directly
in the case when $\AS_\om(Y)$ is an elementary symmetric polynomial
$e_i(y_1,\ldots,y_r)$.  We have $\prod_{i=1}^n(1+y_it)=1$ in $H_n[t]$,
and hence
\[
\prod_{i=1}^r(1+y_it)\equiv\prod_{j=r+1}^n\frac{1}{1+y_jt}
\]
which implies that $e_i(y_1,\ldots,y_r) =
(-1)^ih_i(y_{r+1},\ldots,y_n)$ in $H_n$, for any $i,r\in [1,n]$. 
Applying the automorphism $D$ gives 
\begin{equation}
\label{Deq}
D(e_i(y_1,\ldots,y_r)+I_n) = h_i(y_1,\ldots,y_{n-r}) +I_n.
\end{equation}
\end{example}

\subsection{Reverse double Schubert polynomials}
\label{AStil}

\begin{defn}
\label{rdsp}
For any $\om\in S_\infty$, define the {\em reverse double 
Schubert polynomial}
\begin{equation}
\label{rdspeq}
\wt{\AS}_\om(Y,Z):= \sum_{uv=\om}\AS_u(Y)\AS_{v^{-1}}(-Z),
\end{equation}
where the sum is over all reduced factorizations $uv=\om$.
\end{defn}
The adjective `reverse' in Definition \ref{rdsp} is justified by
comparing (\ref{rdspeq}) with formula (\ref{dbleA2}). Observe also
that if $\om\in S_n$, then $\wt{\AS}_\om(Y,Z) = \langle
\wt{\AS}(Y,Z),\om \rangle$, where
\[
\wt{\AS}(Y,Z) := A_1(y_1)\cdots A_{n-1}(y_{n-1})
\wt{A}_{n-1}(z_{n-1})\cdots \wt{A}_1(z_1)
\]
and, for each variable $t$, the factors $A_i(t)$ and $\wt{A}_i(t)$ are
defined as in \S \ref{spsdd}.

Choose any integer $m\geq 0$, let
$\delta^\vee_{n-1}(m):=(m+1,\ldots,m+n-1)$, and
$\delta^\vee_{n-1}:=(1,2,\ldots,n-1)$. Let $\Omega:=(\omega_1,\ldots,
\omega_m)$ be an $m$-tuple of independent variables and
$\Omega+Y=(\omega_1,\ldots,\omega_m,y_1,y_2,\ldots)$ denote the
concatenation of the alphabets $\Omega$ and $Y$. For every $i\in
[1,m+n-1]$, define $\A_i(t):=(1+tu_{m+n-1})\cdots (1+tu_i)$ and
$\wt{\A}_i(t):=\A_i(t)^{-1}$.  Furthermore, set
$\A(\Omega):=\A_1(\omega_1)\cdots \A_m(\omega_m)$ and
\[
\wt{\AS}(\Omega+Y,Z) := \A(\Omega)
\A_{m+1}(y_1)\cdots \A_{m+n-1}(y_{n-1})
\wt{\A}_{m+n-1}(z_{n-1})\cdots \wt{\A}_{m+1}(z_1).
\]
Let $\om_0$ denote the longest element of $S_n$ and consider the polynomial
\begin{equation}
\label{defwteq}
\wt{\AS}_{1_m\times \om_0}(\Omega+Y,Z) := \langle 
\wt{\AS}(\Omega+Y,Z),1_m\times\om_0 \rangle.
\end{equation}

\begin{prop}
\label{SchubOm}
For every integer $m\geq 0$, we have 
\begin{equation}
\label{Stildeqom}
\wt{\AS}_{1_m\times \om_0}(\Omega+Y,Z) =
S^{(\delta^\vee_{n-1}(m),\delta^\vee_{n-1})}_{\delta_{n-1}}(h(\Omega+Y,-Z)),
\end{equation}
where the superscript $(\delta^\vee_{n-1}(m),\delta^\vee_{n-1})$
indicates the number of $\omega$, $y$, and $z$ variables used in each 
row of the flagged Schur polynomial.
\end{prop}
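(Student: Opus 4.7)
The plan is to reduce the problem to standard Schubert-polynomial expansion identities by combining the reverse factorization \eqref{rdspeq} with the type-A splitting formula \eqref{keyid}, and then to match the resulting triple sum against the flagged Schur determinant on the right-hand side.

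First I apply \eqref{rdspeq} to $\om = 1_m\times\om_0 \in S_{m+n}$. Since any reduced decomposition of $1_m\times\om_0$ uses only the simple transpositions $s_{m+1},\ldots,s_{m+n-1}$, every reduced factorization $uv = 1_m\times\om_0$ has the form $u = 1_m\times u'$, $v = 1_m\times v'$ with $u'v' = \om_0$ in $S_n$. The stability identity $\AS_{1_m\times v'^{-1}}(0,\ldots,0,-z_1,\ldots,-z_{n-1}) = \AS_{v'^{-1}}(-Z)$, which follows from $A_i(0) = 1$ for $i\leq m$, then yields
\[
\wt{\AS}_{1_m\times\om_0}(\Omega+Y,Z) = \sum_{u'v'=\om_0}\AS_{1_m\times u'}(\Omega+Y)\cdot\AS_{v'^{-1}}(-Z).
\]
Each factor $\AS_{1_m\times u'}(\Omega+Y)$ is then expanded via \eqref{keyid}, which applies since $1_m\times u'$ is increasing up to $m$. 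Using the shift-invariance $G_{1_m\times v''} = G_{v''}$ of Stanley symmetric functions, substitution produces the triple sum
\[
\wt{\AS}_{1_m\times\om_0}(\Omega+Y,Z) = \sum_{v''\cdot u\cdot v'=\om_0} G_{v''}(\Omega)\cdot\AS_u(Y)\cdot\AS_{v'^{-1}}(-Z)
\]
indexed by reduced factorizations $v''\cdot u\cdot v' = \om_0$ in $S_\infty$.

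The final step is to identify this triple sum with the flagged Schur determinant on the right-hand side. Expanding the determinant by multilinearity of each entry in the decomposition $h_k(\Omega+Y,-Z) = \sum_{a+b+c=k} h_a(\Omega)\cdot h_b(Y)\cdot(-1)^c h_c(Z)$ produces sums of products of flagged Schur polynomials in the three alphabets $\Omega$, $Y$, $-Z$. Expansion \eqref{Geq} of $G_{v''}(\Omega)$ in the Schur basis, combined with the monomial (or partition-indexed) expansions of $\AS_u(Y)$ and $\AS_{v'^{-1}}(-Z)$, matches these products term by term; Lemma \ref{goodlem} is invoked to align the $h$- and $e$-determinantal descriptions of the flagged Schur polynomial when needed.

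The main obstacle is this final matching step: explicitly identifying the coefficients of the flagged Schur determinant with the triple sum over reduced factorizations of $\om_0$. An alternative route would be to proceed by induction on $n$, verifying that both sides satisfy the same divided-difference recursions in the $y$- and $z$-variables, after a base-case verification at $n=2$ by direct computation.
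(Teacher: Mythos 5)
Your first two reduction steps are correct and recover the paper's intermediate equation (E2), rewriting $\wt{\AS}_{1_m\times\om_0}(\Omega+Y,Z)$ as a sum of products $G_{v''}(\Omega)\,\AS_u(Y)\,\AS_{v'^{-1}}(-Z)$ over reduced triple factorizations of $\om_0$. But you then declare that matching this triple sum with the flagged Schur determinant is ``the main obstacle,'' and the matching you sketch — expand the determinant multilinearly in the three alphabets, expand $G_{v''}(\Omega)$ via \eqref{Geq}, and ``match term by term'' — is a restatement of what needs to be proved, not an argument. This identification is precisely the content of the proposition, and nothing in your outline supplies a mechanism forcing the coefficients to agree (the coefficients on the Stanley side are the numbers $a_\la^{v''}$ together with Schubert-polynomial monomial coefficients, and there is no obvious reason these should recombine into the determinant without a further combinatorial or algebraic input).

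The paper avoids the direct matching problem entirely. It instead (i) observes that $1_m\times\om_0$ is vexillary, so the ordinary double Schubert polynomial $\AS_{1_m\times\om_0}(\Omega+Y,\Omega'+Z)$ has a multi-Schur determinantal expression by a known theorem, giving equation (E3) after setting $\Omega'=0$; (ii) introduces the duality involution $D_z$ in the $Z$-variables, which by Lemma \ref{Dcomp} carries $\AS_{1_m\times\om_0}(\Omega+Y,Z)$ to $\wt{\AS}_{1_m\times\om_0}(\Omega+Y,Z)$ modulo the ideal $I_n^z$; (iii) computes $D_z$ on the multi-Schur entries using \eqref{Deq}, yielding the flagged Schur determinant modulo $I_n^z$; and (iv) upgrades the congruence to an exact equality by showing both sides lie in the subspace $\cH_n^z$ spanned by monomials $z^\al$ with $0\le\al\le\delta_{n-1}$, via the tableau expansion \eqref{monomialsum}. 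None of these steps appears in your outline, and the vexillary multi-Schur input in particular is what substitutes for the term-by-term matching you leave open. Your alternative suggestion (induction via divided-difference recursions) could in principle work but is also not carried out — you would need to verify that the flagged Schur determinant satisfies the appropriate $\partial^y_i$ and $\partial^z_i$ recursions, which is again a substantial computation. As written, the proposal identifies the right reduction but leaves the essential step unproved.
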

\begin{proof}
Let $\Omega':=(\omega'_1,\ldots,\omega'_m)$. Then
\[
\AS_{1_m\times \om_0}(\Omega+Y,\Omega'+Z) = 
\sum_{v_2^{-1} u v_1=\om_0} \AS_{v_2}(-Z)G_u(\Omega/\Omega')\AS_{v_1}(Y),
\]
so setting $\omega'_j=0$ for each $j\in [1,m]$ gives
\begin{equation}
\label{E1}
\AS_{1_m\times \om_0}(\Omega+Y,Z) =
\sum_{u,v_1\, :\, \ell(u)+\ell(v_1) = \ell(uv_1)} G_u(\Omega)\AS_{v_1}(Y)\AS_{uv_1\om_0}(-Z).
\end{equation}
Moreover, it follows from equations (\ref{keyid}) and (\ref{defwteq})
that
\begin{equation}
\label{E2}
\wt{\AS}_{1_m\times \om_0}(\Omega+Y,Z) = 
\sum_{u,v_1\, :\, \ell(u)+\ell(v_1) = \ell(uv_1)}G_u(\Omega)\AS_{v_1}(Y)\AS_{\om_0 uv_1}(-Z).
\end{equation}

Suppose that $Y_1,\ldots,Y_\ell$ and $Z_1\ldots,Z_\ell$ denote finite sets of
independent variables. For any integer vector $\al=(\al_1,\ldots,\al_\ell)$,
we introduce the {\em multi-Schur polynomial}
\[
S_{\al}(Y_1-Z_1\,;\,\cdots\,;\, Y_\ell-Z_\ell)
:=\det(h_{\al_i+j-i}(Y_i/Z_i))_{1\leq i,j\leq \ell}.
\]
These generalize the supersymmetric Schur polynomials defined in \S
\ref{examps}.

A permutation $\om$ is called {\em vexillary} if it is
$2143$-avoiding, that is, there is no sequence $i<j<k<r$ such that
$\om_j<\om_i<\om_r < \om_k$.  The Schubert polynomials which are
indexed by vexillary permutations may be expressed as multi-Schur
polynomials (see \cite[(6.16)]{M2}).  Let $Y_{(r)}:=(y_1,\ldots,y_r)$
and $Z_{(r)}:=(z_1,\ldots,z_r)$ for each $r\geq 0$.  Since the
permutation $1_m\times \om_0$ is vexillary, we deduce from
loc.\ cit.\ that
\[
\AS_{1_m\times \om_0}(\Omega+Y,\Omega'+Z) = 
S_{\delta_{n-1}}(\Omega+Y_{(1)}-\Omega'-Z_{(n-1)} \,;\, \cdots \,;
\, \Omega+Y_{(n-1)}-\Omega'-Z_{(1)})
\]
and therefore, setting $\omega'_j=0$ for each $j\in [1,m]$, that
\begin{equation}
\label{E3}
\AS_{1_m\times \om_0}(\Omega+Y,Z) = 
S_{\delta_{n-1}}(\Omega+Y_{(1)}-Z_{(n-1)}\,;\,\cdots \,;\, \Omega+Y_{(n-1)}-Z_{(1)}).
\end{equation}

Consider the duality isomorphism with respect to the $Z$-variables
$$D_z:\Z[\Omega,Y_{(n)},Z_{(n)}]\to \Z[\Omega,Y_{(n)},Z_{(n)}]$$ which
sends $(z_1,\ldots,z_n)$ to $(-z_n,\ldots,-z_1)$ and leaves all the
remaining variables fixed. Let $I^z_n\subset
\Z[\Omega,Y_{(n)},Z_{(n)}]$ be the ideal generated by the elementary
symmetric polynomials $e_i(Z_{(n)})$ for $1\leq i\leq n$.  It follows
from (\ref{E1}), (\ref{E2}), and Lemma \ref{Dcomp} that
\[
D_z(\AS_{1_m\times\om_0}(\Omega+Y,Z)+I_n^z) = \wt{\AS}_{1_m\times
  \om_0}(\Omega+Y,Z) + I^z_n.
\] 
Furthermore, for $r,s\in [1,n-1]$, equation (\ref{Deq}) implies that
\[
D_z(h_j(Y_{(r)}-Z_{(s)})) \equiv h_j(Y_{(r)},-Z_{(n-s)})
\]
modulo the ideal $I^z_n$. It follows from (\ref{E3}) that
\begin{align*}
D_z(\AS_{1_m\times \om_0}(\Omega+Y,Z)) & \equiv
S_{\delta_{n-1}}(h(\Omega+Y_1,-Z_1),\ldots,h(\Omega+Y_{n-1},-Z_{n-1})) \\
&= S^{(\delta^\vee_{n-1}(m),\delta^\vee_{n-1})}_{\delta_{n-1}}(h(\Omega+Y,-Z)).
\end{align*}
We deduce that equation (\ref{Stildeqom}) holds modulo $I^z_n$.

Let $\cH_n^z$ be the $\Z[\Omega,Y_{(n)}]$-linear subspace of
$\Z[\Omega,Y_{(n)},Z_{(n)}]$ spanned by the monomials $z^\al$ for
$0\leq \al\leq \delta_{n-1}$.  Then the monomial expression
(\ref{monomialsum}) for the flagged Schur polynomial
$S^{(\delta^\vee_{n-1}(m),\delta^\vee_{n-1})}_{\delta_{n-1}}(h(\Omega+Y,-Z))$
in (\ref{Stildeqom}) proves that the latter lies in $\cH_n^z$. Since
the same is clearly true of $\wt{\AS}_{1_m\times\om_0}(\Omega+Y,Z)$,
the proposition follows.
\end{proof}

The next result is obtained by setting $m=0$ in Proposition
\ref{SchubOm}.

\begin{cor}
\label{Schubom0}
We have $\dis 
\wt{\AS}_{\om_0}(Y,Z) = S^{(\delta^\vee_{n-1},\delta^\vee_{n-1})}_{\delta_{n-1}}(h(Y,-Z))$.
\end{cor}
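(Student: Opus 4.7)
The plan is to derive the corollary as an immediate specialization of Proposition \ref{SchubOm} at $m = 0$. First I would verify that every ingredient of that proposition degenerates sensibly: the alphabet $\Omega = (\omega_1,\ldots,\omega_m)$ is empty, so $\Omega + Y$ reduces to $Y$ and the factor $\A(\Omega)$ becomes the identity; the permutation $1_0 \times \om_0$ coincides with $\om_0$; and the flag superscript $\delta^\vee_{n-1}(0) = (0+1,\ldots,0+n-1)$ agrees with $\delta^\vee_{n-1} = (1,\ldots,n-1)$.

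With these identifications, the left-hand side of (\ref{Stildeqom}) becomes $\wt{\AS}_{\om_0}(Y,Z)$, in accordance with Definition \ref{rdsp}, while the right-hand side becomes $S^{(\delta^\vee_{n-1},\delta^\vee_{n-1})}_{\delta_{n-1}}(h(Y,-Z))$. The asserted equality is therefore the $m=0$ instance of (\ref{Stildeqom}), and no additional argument is needed.

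Since the corollary is obtained by a trivial substitution, there is no substantive obstacle. For completeness, one could bypass Proposition \ref{SchubOm} and argue directly along the same lines: apply the duality involution $D_z$ of Lemma \ref{Dcomp} in the $Z$-variables to the multi-Schur determinantal expression for the vexillary Schubert polynomial $\AS_{\om_0}(Y,Z)$, and then lift the resulting identity from the quotient $\Z[Y_{(n)},Z_{(n)}]/I^z_n$ to $\Z[Y_{(n)},Z_{(n)}]$ by noting, via the monomial expansion (\ref{monomialsum}) of the flagged Schur polynomial and the analogous expansion (\ref{rdspeq}) of $\wt{\AS}_{\om_0}(Y,Z)$, that both sides lie in the truncated span where $z^\al$ satisfies $0 \leq \al \leq \delta_{n-1}$. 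Either route reaches the conclusion without difficulty.
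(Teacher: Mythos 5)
Your proof is correct and matches the paper exactly: the corollary is stated as the $m=0$ specialization of Proposition \ref{SchubOm}, and your verification that the alphabet $\Omega$, the permutation $1_m\times\om_0$, and the flag superscript $\delta^\vee_{n-1}(m)$ all degenerate as claimed is precisely what justifies the substitution. The extra sketch of a direct argument bypassing the proposition is a reasonable observation but not needed.
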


\section{The type C theory}
\label{tCt}

\subsection{Schubert polynomials and divided differences}
\label{spsdds}
The Weyl group for the root system of type $\text{B}_n$ or
$\text{C}_n$ is the {\em hyperoctahedral group} $W_n$, which consists
of signed permutations on the set $\{1,\ldots,n\}$. The group $W_n$ is
generated by the simple transpositions $s_i=(i,i+1)$ for $1\leq i \leq
n-1$ and the sign change $s_0(1)=\ov{1}$ (as is customary, we use a
bar to denote an entry with a negative sign). There is a natural
embedding $W_n\hookrightarrow W_{n+1}$ defined by adding the fixed
point $n+1$, and we let $W_\infty :=\cup_n W_n$.  The notions of
length, reduced words, and descents of elements of $W_\infty$ are
defined as in the case of the symmetric group $S_\infty$, only now the
simple reflections are indexed by the integers in the set $\N_0
:=\{0,1,\ldots\}$.

The {\em nilCoxeter algebra} $\cW_n$ of the hyperoctahedral group
$W_n$ is the free associative algebra with unit generated by the
elements $u_0,u_1,\ldots,u_{n-1}$ modulo the relations
\[
\begin{array}{rclr}
u_i^2 & = & 0 & i\in \N_0\ ; \\
u_iu_j & = & u_ju_i & |i-j|\geq 2\ ; \\
u_iu_{i+1}u_i & = & u_{i+1}u_iu_{i+1} & i>0\ ; \\
u_0u_1u_0u_1 & = & u_1u_0u_1u_0.
\end{array}
\]
For any $w\in W_n$, define $u_w := u_{a_1}\ldots u_{a_\ell}$, where
$a_1\cdots a_\ell$ is any reduced word for $w$. Then the $u_w$ for
$w\in W_n$ form a free $\Z$-basis of $\cW_n$. As in \S \ref{spsdd}, we
denote the coefficient of $u_w\in \cW_n$ in the expansion of the
element $\xi\in \cW_n$ by $\langle \xi,w\rangle$.

Let $t$ be a variable and define
\begin{gather*}
C(t) := (1+t u_{n-1})\cdots(1+t u_1)(1+tu_0)
(1+t u_0)(1+t u_1)\cdots (1+t u_{n-1}).
\end{gather*}
Suppose that $X=(x_1,x_2,\ldots)$ is another infinite sequence
of commuting variables, let 
$C(X):=C(x_1)C(x_2)\cdots$, and for $w\in W_n$, define
\begin{equation}
\label{dbleC}
\CS_w(X\,;Y,Z) := \left\langle 
\tilde{A}_{n-1}(z_{n-1})\cdots \tilde{A}_1(z_1)C(X) A_1(y_1)\cdots 
A_{n-1}(y_{n-1}), w\right\rangle.
\end{equation}
Set $\CS_w(X\,;Y):=\CS_w(X\,;Y,0)$. The polynomials $\CS_w(X\,;Y)$ are
the type C Schubert polynomials of Billey and Haiman \cite{BH} and the
$\CS_w(X\,;Y,Z)$ are their double versions introduced by Ikeda,
Mihalcea, and Naruse \cite{IMN1}.

Note that $\CS_w$ is really a polynomial in the $Y$ and $Z$ variables,
with coefficients which are formal power series in $X$, with integer
coefficients.  These power series are symmetric in the $X$ variables,
since $C(s)C(t) = C(t)C(s)$, for any two commuting variables $s$ and
$t$ (see \cite[Prop.\ 4.2]{FK}). We set
\begin{equation}
\label{Fdef}
F_w(X):=\CS_w(X\,;0,0) = \left\langle C(X), w\right\rangle
\end{equation}
 and call $F_w$ the {\em type C Stanley symmetric function} indexed by
 $w\in W_n$. We deduce from (\ref{Fdef}) that the coefficients of
 $F_w(X)$ have a combinatorial interpretation (compare with
 \cite[(3.5)]{BH}). Observe also that we have $F_w=F_{w^{-1}}$.

The above definition of $\CS_w(X\,;Y,Z)$ implies that it is stable 
under the natural inclusions $W_n\hookrightarrow W_{n+1}$ of the 
Weyl groups, and hence is well defined for $w\in W_\infty$. Equation
(\ref{dbleC}) implies the relation
\begin{equation}
\label{dbleC2}
\CS_w(X\,;Y,Z) = \sum_{uv\om=w}\AS_{u^{-1}}(-Z)F_v(X)\AS_{\om}(Y)
\end{equation}
summed over all reduced factorizations $uv\om=w$ with $u,\om\in S_\infty$. 

The type C Stanley symmetric functions $F_w(X)$ lie in the ring $\Gamma$
of Schur $Q$-functions $Q_\la(X)$. For each $r\in \Z$, 
define the basic function $q_r(X)$ by the equation
\[
\prod_{i=1}^{\infty}\frac{1+x_it}{1-x_it} = \sum_{r=0}^{\infty}q_r(X)t^r.
\]
For any integer vector $\al$, let $q_\al:=\prod_iq_{\al_i}$, and
define $Q_\al:=Q_\al(X)$ by
\begin{equation}
\label{Qdef}
Q_\al:=\RR\, q_\al
\end{equation}
where the raising operator expression $\RR$ is given by
\[
\RR:=\prod_{i<j}\frac{1-R_{ij}}{1+R_{ij}}\,.
\]
Following \cite[III.8]{M1}, we can equivalently write formula
(\ref{Qdef}) using a Schur Pfaffian \cite{Sc}.  More precisely, for
integer vectors $\al=(\al_1,\al_2)$ with only two parts, we have
\[
Q_{(\al_1,\al_2)}=\frac{1-R_{12}}{1+R_{12}}\, q_{(\al_1,\al_2)} = 
q_{\al_1}q_{\al_2}+2\,\sum_{j\geq 1} (-1)^j q_{\al_1+j}q_{\al_2-j},
\]
while for an integer vector $\al=(\al_1,\ldots,\al_\ell)$ with three
or more components,
\begin{equation}
\label{giamQ}
Q_\al = \Pf(Q_{(\al_i,\al_j)})_{1\leq i<j \leq 2\ell'}
\end{equation}
where $\ell'$ is the least positive integer such that $2\ell' \geq
\ell$.

A partition $\la$ is strict if all its (non-zero) parts $\la_i$ 
are distinct. It is known that the $Q_\la(X)$ for $\la$ a strict partition 
form a free $\Z$-basis of the ring $\Gamma:=\Z[q_1(X),q_2(X),\ldots]$.
For any $w\in W_\infty$, we have an identity
\begin{equation}
\label{CStan0}
F_w(X) = \sum_{\la\, :\, |\la| = \ell(w)} e^w_{\la}\,Q_{\la}(X),
\end{equation}
summed over all strict partitions $\la$ with $|\la|=\ell(w)$. At this
stage we only need to know that an equation (\ref{CStan0}) exists with
$e^w_\la\in \Q$. This latter fact follows immediately from the
cancellation rule $C(t)C(-t)=1$ and a result of Pragacz
\cite[Thm.\ 2.11]{P} (see also \cite[\S 4]{FK}).  We refer to
\cite{B, BH, La} for three different combinatorial proofs that the
coefficients $e^w_{\la}$, when non-zero, are positive
integers. Equations (\ref{dbleC2}) and (\ref{CStan0}) show that the
Schubert polynomials $\CS_w(X\,;Y,Z)$ lie in the ring $\Gamma[Y,Z]$.

We define an action of $W_\infty$ on $\Gamma[Y,Z]$ by ring
automorphisms as follows. The simple reflections $s_i$ for $i>0$ act
by interchanging $y_i$ and $y_{i+1}$ and leaving all the remaining
variables fixed, as in \S \ref{spsdd}. The reflection $s_0$ maps $y_1$
to $-y_1$, fixes the $y_j$ for $j\geq 2$ and all the $z_j$, and
satisfies
\[
s_0(q_r(X)) := q_r(y_1,x_1,x_2,\ldots) =
q_r(X)+2\sum_{j=1}^ry_1^jq_{r-j}(X).
\]
For each $i\geq 0$, define the {\em divided difference operator}
$\partial_i^y$ on $\Gamma[Y,Z]$ by
\[
\partial_0^yf := \frac{f-s_0f}{-2y_1}, \qquad
\partial_i^yf := \frac{f-s_if}{y_i-y_{i+1}} \ \ \ \text{for $i>0$}.
\]
Consider the ring involution $\omega:\Gamma[Y,Z]\to\Gamma[Y,Z]$
determined by
\[
\omega(y_j) = -z_j, \qquad
\omega(z_j) = -y_j, \qquad
\omega(q_r(X))=q_r(X)
\]
and set $\partial_i^z:=\omega\partial_i^y\omega$ for each $i\geq 0$.

The polynomials $\CS_w(X\,;Y,Z)$ for $w\in W_{\infty}$ are the unique
family of elements of $\Gamma[Y,Z]$ satisfying the equations
\begin{equation}
\label{ddCeq}
\partial_i^y\CS_w = \begin{cases}
\CS_{ws_i} & \text{if $\ell(ws_i)<\ell(w)$}, \\ 
0 & \text{otherwise},
\end{cases}
\quad
\partial_i^z\CS_w = \begin{cases}
\CS_{s_iw} & \text{if $\ell(s_iw)<\ell(w)$}, \\ 
0 & \text{otherwise},
\end{cases}
\end{equation}
for all $i\geq 0$, together with the condition that the constant term
of $\CS_w$ is $1$ if $w=1$, and $0$ otherwise. As a consequence, the
the descents of $w$ and $w^{-1}$ determine the symmetries of the
polynomial $\CS_w(X\,;Y,Z)$. Note however the special role that
descents at {\em zero} play here. Furthermore, as in type A, one can
show that the double Schubert polynomials $\CS_w(X\,;Y,Z)$ represent
the universal Schubert classes in type C flag bundles, and therefore
degeneracy loci of symplectic vector bundles, in the sense of
\cite{Fu2}. For a simple proof of these assertions, see \cite[\S
  7.3]{T6}.  The {\em geometrization} of the Schubert polynomials
$\CS_w$ will be discussed in \S \ref{geometriz}.

\subsection{The Schubert polynomial indexed by the longest element}
\label{1stsec}

Let $w_0$ denote the longest element in $W_n$.  A formula for the top
single Schubert polynomial $\CS_{w_0}(X\,;Y)$ was given by Billey and
Haiman \cite[Prop.\ 4.15]{BH}.  In this section, we derive the
analogue of their result for the double Schubert polynomial
$\CS_{w_0}(X\, ; Y,Z)$, and use it to give a new proof of a Pfaffian
formula for $\CS_{w_0}$ due to Ikeda, Mihalcea, and Naruse
\cite[Thm.\ 1.2]{IMN1}.

Observe first that $w_0u=uw_0$ for any $u\in S_n$. Using this and
equation (\ref{dbleC2}) gives
\[
\CS_{w_0}(X\, ; Y,Z) = \sum_{wvu=w_0} F_w(X)\AS_{u^{-1}}(-Z)\AS_v(Y) \\
\]
where the $u,v$ in the sum lie in $S_n$, and the factorization
$wvu=w_0$ is reduced. It follows that
\begin{equation}
\label{ffeq}
\CS_{w_0}(X\, ; Y,Z) = 
\sum_{\sigma\in S_n}F_{w_0\sigma^{-1}}(X)\wt{\AS}_\sigma(Y,Z).
\end{equation}

\begin{prop}
\label{Ctopfirst}
The equation 
\begin{equation}
\label{Ctopeqfirst}
\CS_{w_0}(X\, ;Y,Z) = \sum_{\la\subset \delta_{n-1}}  Q_{\delta_n+\la}(X) 
S^{(\delta^\vee_{n-1},\delta^\vee_{n-1})}_{\delta_{n-1}/\la'}(h(Y,-Z))
\end{equation}
holds in $\Gamma[Y,Z]$.
\end{prop}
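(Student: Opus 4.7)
My approach is to derive (\ref{Ctopeqfirst}) from the reduction identity (\ref{ffeq}), Billey and Haiman's single-variable formula \cite[Prop.\ 4.15]{BH}, and the duality machinery of \S\ref{fspdual}.

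First, combining (\ref{ffeq}) with the expansion (\ref{CStan0}) gives
\[
\CS_{w_0}(X\,;Y,Z) = \sum_{\sigma\in S_n}\sum_\mu e^{w_0\sigma^{-1}}_\mu\,Q_\mu(X)\,\wt\AS_\sigma(Y,Z).
\]
A degree count (using $\ell(w_0\sigma^{-1}) = n^2 - \ell(\sigma)$ and $|\delta_n+\la| = n(n+1)/2 + |\la|$) shows that the only strict $\mu$ for which $e^{w_0\sigma^{-1}}_\mu$ can be non-zero are those of the form $\delta_n + \la$ with $\la\subset\delta_{n-1}$ and $|\la| = n(n-1)/2 - \ell(\sigma)$. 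Since the $Q_{\delta_n+\la}$ for $\la\subset\delta_{n-1}$ are $\Z$-linearly independent in $\Gamma$, (\ref{Ctopeqfirst}) is therefore equivalent to the family of identities
\[
\sum_{\sigma\in S_n} e^{w_0\sigma^{-1}}_{\delta_n+\la}\,\wt\AS_\sigma(Y,Z)
\;=\;S^{(\delta^\vee_{n-1},\delta^\vee_{n-1})}_{\delta_{n-1}/\la'}(h(Y,-Z)),
\qquad \la\subset\delta_{n-1}.
\]

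Setting $Z=0$, using $\wt\AS_\sigma(Y,0) = \AS_\sigma(Y)$, and summing over $\la$ reproduces precisely the Billey--Haiman formula for the single Schubert polynomial $\CS_{w_0}(X;Y)$, which I would quote from the literature. By the $\Z$-linear independence argument above, this pins down the $Y$-only specialization of each of the individual identities.

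To upgrade from $Y$ to $(Y,-Z)$, I would mirror the duality argument developed in the proof of Proposition \ref{SchubOm}: introduce the ideal $I^z_n$ generated by $e_i(Z_{(n)})$ and the involution $D_z$ on the $Z$-variables, then invoke the congruence $D_z(h_j(Y_{(r)}/Z_{(s)}))\equiv h_j(Y_{(r)},-Z_{(n-s)})\pmod{I_n^z}$ coming from (\ref{Deq}). Applying $D_z$ to the $Z=0$ identity modulo $I_n^z$ converts $\AS_\sigma(Y)$ into $\wt\AS_\sigma(Y,Z)$ (by Lemma \ref{Dcomp} and (\ref{rdspeq})) and turns the flagged single Schur polynomial $S^{\delta^\vee_{n-1}}_{\delta_{n-1}/\la'}(h(Y))$ into $S^{(\delta^\vee_{n-1},\delta^\vee_{n-1})}_{\delta_{n-1}/\la'}(h(Y,-Z))$. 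Finally, the monomial expansion (\ref{monomialsum}) places both sides of the desired identity inside the $\Gamma[Y_{(n)}]$-linear subspace of $\Gamma[Y_{(n)},Z_{(n)}]$ spanned by $z^\al$ for $0\le\al\le\delta_{n-1}$, so the modular identity lifts to an actual equality in $\Gamma[Y,Z]$, completing the proof.

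The main obstacle I anticipate is the skew case: when $\la\subset\delta_{n-1}$ is nontrivial, the flagged skew polynomial $S^{\delta^\vee_{n-1}}_{\delta_{n-1}/\la'}(h(Y))$ must be realized through a vexillary/multi-Schur presentation in order for Macdonald's formula \cite[(6.16)]{M2} and hence the argument of Proposition \ref{SchubOm} to go through uniformly in $\la$. Once this recognition is in place, the $\cH_n^z$-spanning argument used at the end of the proof of Proposition \ref{SchubOm} applies verbatim.
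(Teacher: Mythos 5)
Your opening reduction matches the paper: both you and the paper begin with (\ref{ffeq}) and reduce (\ref{Ctopeqfirst}) to the family of identities
$\sum_{\sigma\in S_n}a_\la^{\sigma\om_0}\,\wt\AS_\sigma(Y,Z)=S^{(\delta^\vee_{n-1},\delta^\vee_{n-1})}_{\delta_{n-1}/\la'}(h(Y,-Z))$.
However, two parts of your argument do not hold up, and they conceal the crux of the paper's proof.

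First, your degree count does not establish that the only strict $\mu$ with $e^{w_0\sigma^{-1}}_\mu\neq 0$ are of the form $\delta_n+\la$ with $\la\subset\delta_{n-1}$; it only checks that the degrees are consistent with that conclusion. The structural fact that $F_{w_0u}$ is supported on partitions containing $\delta_n$, with coefficients given by the transition numbers $a_\la^{u^{-1}\om_0}$, is the nontrivial input (\ref{FQeq}) coming from Billey--Haiman and Lam, which the paper cites explicitly. You do eventually say you would quote the literature, but as written the argument suggests a degree count suffices, which it does not.

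Second, and more seriously, the ``upgrade from $Y$ to $(Y,-Z)$ via $D_z$'' step cannot work as you have stated it. The $Z=0$ identity $\sum_\sigma a_\la^{\sigma\om_0}\AS_\sigma(Y)=S^{\delta^\vee_{n-1}}_{\delta_{n-1}/\la'}(h(Y))$ involves no $z$-variables at all, and $D_z$ fixes the $y$-variables, so applying $D_z$ just returns the same $Y$-only identity; there is nothing for Lemma \ref{Dcomp} to act on. The paper's duality argument in Proposition \ref{SchubOm} is fundamentally different: it starts with the \emph{double} polynomial $\AS_{1_m\times\om_0}(\Omega+Y,Z)$, which is vexillary, writes it as a multi-Schur determinant (\ref{E3}), and only then applies $D_z$ to \emph{both} sides. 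For general $\la$ the sum $\sum_\sigma a_\la^{\sigma\om_0}\AS_\sigma(Y,Z)$ is not a single Schubert polynomial, is not vexillary, and has no multi-Schur formula, so your plan to ``realize $S^{\delta^\vee_{n-1}}_{\delta_{n-1}/\la'}$ through a vexillary/multi-Schur presentation so the argument goes through uniformly in $\la$'' is precisely the step that fails. You correctly flag this as the main obstacle, but you leave it unresolved, and it is exactly here that the paper's proof does its real work: it introduces the auxiliary alphabet $\Omega$, proves the non-skew statement (\ref{Stildeqom}) for $\wt\AS_{1_m\times\om_0}(\Omega+Y,Z)$, expands both sides as generating series in $s_\la(\Omega)$ via (\ref{dedeq}) and (\ref{sedeq}), and extracts the coefficient of each $s_\la(\Omega)$ to obtain the skew identity (\ref{BHgen}) all at once. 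Without this device, your outline does not assemble into a proof.
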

\begin{proof}
Recall that $\om_0$ denotes the longest permutation in $S_n$.
According to \cite[Thm.\ 3.16]{BH} and \cite[Thm.\ 3.15]{La}, we have,
for every $u\in S_n$,
\begin{equation}
\label{FQeq}
F_{w_0u}(X) = \sum_\la a_{\la}^{u^{-1}\om_0} Q_{\delta_n+\la}(X),
\end{equation}
summed over partitions $\la$.  We deduce from (\ref{ffeq}) and
(\ref{FQeq}) that
\[
\CS_{w_0}(X\, ; Y,Z) = \sum_\la Q_{\delta_n+\la}(X) \sum_{\sigma\in
  S_n} a_{\la}^{\sigma\om_0} \wt{\AS}_\sigma(Y,Z).
\]
It therefore suffices to show that for every partition $\la$, we have 
\begin{equation}
\label{BHgen}
\sum_{\sigma\in S_n} a_{\la}^{\sigma\om_0} \wt{\AS}_\sigma(Y,Z) =
S^{(\delta^\vee_{n-1},\delta^\vee_{n-1})}_{\delta_{n-1}/\la'}(h(Y,-Z)).
\end{equation}
This is a generalization of \cite[Eqn.\ (4.63)]{BH}, and its proof is 
similar. 

Choose any integer $m\geq 0$ and define $\Omega:=(\omega_1,\ldots,
\omega_m)$ and $\delta^\vee_{n-1}(m):=(m+1,\ldots,m+n-1)$ as in 
\S \ref{AStil}. For any integer vector $\gamma=(\gamma_1,\ldots,
\gamma_{n-1})$, we have 
\begin{equation}
\label{hdeq}
h^{(\delta^\vee_{n-1}(m),\delta^\vee_{n-1})}_{\gamma}(\Omega+Y,-Z) 
= \sum_{\al\geq 0} h_{\gamma-\al}(\Omega)
h_{\al}^{(\delta^\vee_{n-1},\delta^\vee_{n-1})}(Y,-Z)
\end{equation}
summed over all compositions $\al=(\al_1,\ldots,
\al_{n-1})$. Moreover, for any such composition $\al$, we have
$s_{\delta_{n-1}-\al}(\Omega)=0$ unless $\delta_{n-1}-\al+\delta_{n-2}
= \s(\la+\delta_{n-2})$ for some partition $\la$ and permutation $\s
\in S_{n-1}$, in which case $s_{\delta_{n-1}-\al}(\Omega)=(-1)^{\s}
s_\la(\Omega)$.  Using this and equation (\ref{hdeq}), we compute that
\begin{gather*}
S^{(\delta^\vee_{n-1}(m),\delta^\vee_{n-1})}_{\delta_{n-1}}(h(\Omega+Y,-Z)) 
= \prod_{i<j}(1-R_{ij}) \, h^{(\delta^\vee_{n-1}(m),\delta^\vee_{n-1})}_{\delta_{n-1}}
(\Omega+Y,Z) \\
= \sum_{\al\geq 0}
s_{\delta_{n-1}-\al}(\Omega)\,h^{(\delta^\vee_{n-1},\delta^\vee_{n-1})}_{\al}(Y,-Z) \\
= \sum_{\la}s_\la(\Omega)\sum_{\s\in S_{n-1}} (-1)^{\s}\,
h^{(\delta^\vee_{n-1},\delta^\vee_{n-1})}_{\delta_{n-1}+\delta_{n-2}-\s(\la+\delta_{n-2})}(Y,-Z).
\end{gather*}
We deduce that 
\begin{equation}
\label{dedeq}
S^{(\delta^\vee_{n-1}(m),\delta^\vee_{n-1})}_{\delta_{n-1}}(\Omega+Y,-Z) =
\sum_{\la\subset \delta_{n-1}}s_\la(\Omega)\,
S^{(\delta^\vee_{n-1},\delta^\vee_{n-1})}_{\delta_{n-1}/\la}(h(Y,-Z)).
\end{equation}

On the other hand, it follows from (\ref{keyid}), (\ref{Geq}), and the
definition (\ref{defwteq}) that
\[
\wt{\AS}_{1_m\times \om_0}(\Omega+Y,Z) = 
\sum_{uv = \om_0}G_u(\Omega)\wt{\AS}_v(Y,Z)
= \sum_\la \sum_{v\in S_n} a_\la^{\om_0v^{-1}} s_\la(\Omega) \wt{\AS}_v(Y,Z)
\]
and hence
\begin{equation}
\label{sedeq}
\wt{\AS}_{1_m\times \om_0}(\Omega+Y,Z) = 
\sum_\la s_\la(\Omega) \sum_{\sigma\in S_n} a_{\la'}^{\sigma\om_0}\wt{\AS}_\sigma(Y,Z).
\end{equation}

By Proposition \ref{SchubOm}, the left hand sides of equations 
(\ref{dedeq}) and (\ref{sedeq}) coincide. Comparing the coefficients 
of $s_\la(\Omega)$ on the right hand sides of the same equations
completes the proof of (\ref{BHgen}), and hence of the proposition.
\end{proof}

Our next goal is to express the top polynomial $\CS_{w_0}(X\, ; Y,Z)$
as a multi-Schur Pfaffian analogous to equations (\ref{Qdef}) and
(\ref{giamQ}).  For any $k,r\in \Z$, we define the polynomial
${}^kc^r_p={}^kc^r_p(X\, ; Y,Z)$ by
\begin{equation}
\label{ceq}
{}^kc^r_p:=\sum_{i=0}^p\sum_{j=0}^p q_{p-j-i}(X)h^{-k}_i(Y)h_j^r(-Z).
\end{equation}
The polynomials ${}^kc^r_p$ were first studied by Wilson in
\cite[Def.\ 6 and Prop.\ 6]{W}.  For any integer sequences
$\al,\be,\rho$, define ${}^\rho c^\be_\al:=\prod_i
     {}^{\rho_i}c^{\be_i}_{\al_i}$.  Given any raising operator $R$,
     let $R\, {}^\rho c^\be_{\al} := {}^\rho c^\be_{R\al}$.  Finally,
     define the {\em multi-Schur Pfaffian} ${}^\rho Q^\be_\al(c)$ by
\[
{}^\rho Q^\be_\al(c):= \RR\, {}^\rho c^\be_\al.
\]

\begin{prop}
\label{Ctop}
The equation 
\begin{equation}
\label{Ctopeq}
\CS_{w_0}(X\, ;Y,Z) = {}^{\delta_{n-1}}Q_{\delta_n+\delta_{n-1}}^{-\delta_{n-1}}(c)
\end{equation}
holds in $\Gamma[Y,Z]$.
\end{prop}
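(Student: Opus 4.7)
The plan is to show that the right-hand side of (\ref{Ctopeq}) coincides with the expansion of $\CS_{w_0}(X;Y,Z)$ furnished by Proposition \ref{Ctopfirst}. First, I would expand
\[
{}^{\delta_{n-1}}c^{-\delta_{n-1}}_{\delta_n+\delta_{n-1}} \;=\; \prod_{k=1}^{n}{}^{n-k}c^{-(n-k)}_{2n-2k+1}
\]
using the definition (\ref{ceq}) together with the identities $h^{-k}_i(Y)=e^k_i(Y)$ and $h^{-k}_j(-Z)=e^k_j(-Z)$. The raising operator $\RR$ acts only on the indices of the $q$-factors and therefore commutes past the $Y$- and $Z$-factors; after collapsing $\sum_{i+j=\nu}e^{n-k}_i(Y)e^{n-k}_j(-Z) = e^{n-k}_\nu(Y,-Z)$ (the elementary symmetric polynomial in the combined $2(n-k)$-variable alphabet), this would yield
\[
{}^{\delta_{n-1}}Q^{-\delta_{n-1}}_{\delta_n+\delta_{n-1}}(c) \;=\; \sum_{\vec\nu\geq 0} Q_{\delta_n+\delta_{n-1}-\vec\nu}(X)\;\prod_{k=1}^{n}e^{n-k}_{\nu_k}(Y,-Z).
\]

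Second, I would invoke the antisymmetry and vanishing laws for $Q_\gamma$ on arbitrary integer vectors, both of which follow from the Schur Pfaffian formula (\ref{giamQ}): namely $Q_{\sigma(\gamma)}=\mathrm{sgn}(\sigma)\,Q_\gamma$ for permutations $\sigma$, and $Q_\gamma=0$ whenever two entries of $\gamma$ coincide. Since $\nu_k\leq 2(n-k)$ forces every entry of $\delta_n+\delta_{n-1}-\vec\nu$ to lie in $[1,2n-1]$, grouping the non-vanishing summands by the strict partition $\delta_n+\la$ (with $\la\subset\delta_{n-1}$) to which $\delta_n+\delta_{n-1}-\vec\nu$ antisymmetrically rearranges, the coefficient of $Q_{\delta_n+\la}(X)$ becomes
\[
\sum_{\sigma\in S_n}\mathrm{sgn}(\sigma)\prod_{k=1}^{n}e^{n-k}_{\,(2n-2k+1)-(\delta_n+\la)_{\sigma(k)}}(Y,-Z) \;=\; \det\!\Bigl(e^{n-k}_{\,n-2k+j-\la_j}(Y,-Z)\Bigr)_{1\leq k,j\leq n}.
\]
The $k=n$ row of this matrix has a single non-zero entry (equal to $1$) in column $n$, so the determinant collapses to the $(n-1)\times(n-1)$ minor indexed by $1\leq k,j\leq n-1$.

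Third, I would view $(Y,-Z)$ as the single combined alphabet $t:=(y_1,-z_1,y_2,-z_2,\ldots)$, under which $e^{n-k}_p(Y,-Z)=e^{2(n-k)}_p(t)$ and $h^{(i,i)}_p(Y,-Z)=h^{2i}_p(t)$. Then Lemma \ref{goodlem} applied with $k=n$, $\ell=n-1$, $\la=\delta_{n-1}$ (self-conjugate), and $\mu=\la'$ converts the $e$-determinant into
\[
\det\!\Bigl(h^{2i}_{\,n-2i+j-\la'_j}(t)\Bigr)_{1\leq i,j\leq n-1} \;=\; S^{(\delta^\vee_{n-1},\delta^\vee_{n-1})}_{\delta_{n-1}/\la'}\!\bigl(h(Y,-Z)\bigr),
\]
which is precisely the flagged skew Schur polynomial on the right-hand side of (\ref{Ctopeqfirst}). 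Matching coefficients of each $Q_{\delta_n+\la}(X)$ then establishes (\ref{Ctopeq}). The main obstacle will be the careful sign and flag bookkeeping in the middle step: formally extending the Pfaffian antisymmetry of $Q_\gamma$ to arbitrary integer vectors and verifying that the permutation-indexed sum collapses to a determinant whose flag and skew shape match exactly those required by Lemma \ref{goodlem} after the row reduction.
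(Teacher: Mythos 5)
Your proposal is correct and follows essentially the same route as the paper's own proof: expand the raising-operator expression so that $\RR$ acts only on the $q$-subscripts, use the alternating property of Schur $Q$-functions to group terms by strict partitions $\delta_n+\la$ and collapse the coefficients into the flagged skew determinant $S^{(\delta_{n-1},\delta_{n-1})}_{\delta_{n-1}/\la}(e(Y,-Z))$, and then invoke Lemma \ref{goodlem} to convert to the $h$-determinant appearing in Proposition \ref{Ctopfirst}. The only cosmetic difference is that you sum over $S_n$ and reduce the resulting $n\times n$ determinant along the last row, whereas the paper sums directly over $S_{n-1}$; also, the antisymmetry/vanishing you cite is stated in the paper via a reference to \cite[Lemma 1.3]{BKT2} rather than deduced from (\ref{giamQ}), but this does not affect the argument.
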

\begin{proof}
If we set ${}^kh^r_m(Y,-Z):=
\sum_{j=0}^m h^{-k}_j(Y)h_{m-j}^r(-Z)$, then we have
\[
{}^kc^r_p= \sum_{j=0}^p q_{p-j}(X) \,{}^kh_j^r(Y,-Z).
\]
In particular, if $k,r\geq 0$, then we have
\begin{equation}
\label{heq}
{}^kh^{-r}_m(Y,-Z)=e_m(y_1,\ldots,y_k,-z_1,\ldots,-z_r) = 
e_m(Y_{(k)}, -Z_{(r)})
\end{equation}
so that ${}^kh^{-r}_m(Y,-Z)=0$ whenever $m>k+r$.

For any integer sequences $\al,\be,\rho$, define ${}^\rho
h^\be_\al:=\prod_i {}^{\rho_i}h^{\be_i}_{\al_i}$.  Notice, using
(\ref{heq}), that for any integer vector
$\gamma=(\gamma_1,\ldots,\gamma_n)$, we have
\[
{}^{\delta_{n-1}}c_{\gamma}^{-\delta_{n-1}} = 
\sum_{0\leq \al \leq 2\delta_{n-1}}q_{\gamma-\al}(X)\,
{}^{\delta_{n-1}}h_\al^{-\delta_{n-1}}(Y,-Z),
\]
and hence, by the definition (\ref{Qdef}),  
\[
{}^{\delta_{n-1}}Q_{\delta_n+\delta_{n-1}}^{-\delta_{n-1}}(c) =
\sum_{0\leq \al \leq 2\delta_{n-1}}Q_{\delta_n+\delta_{n-1}-\al}(X)\,
{}^{\delta_{n-1}}h_\al^{-\delta_{n-1}}(Y,-Z).
\]
Recall for example from \cite[Lemma 1.3]{BKT2} that the Schur
$Q$-functions $Q_\gamma(X)$ are alternating in the components
$(\gamma_i,\gamma_j)$ of the index $\gamma$, provided that
$\gamma_i+\gamma_j>0$. Therefore, we have
$Q_{\delta_n+\delta_{n-1}-\al}(X)=0$ in the above sum unless
\[
\delta_n+\delta_{n-1}-\al = 1^n+\s(\delta_{n-1}+\la)
\]
for some partition $\la \subset \delta_{n-1}$ and permutation $\s\in S_{n-1}$.
Observe that $\al = 2 \delta_{n-1} - \s(\delta_{n-1}+\la)$ is uniquely 
determined from $\la$ and $\s$. It follows that
\begin{gather*}
{}^{\delta_{n-1}}Q_{\delta_n+\delta_{n-1}}^{-\delta_{n-1}}(c) = 
\sum_{\la\subset\delta_{n-1}}\sum_{\s\in S_{n-1}} Q_{1^n+\s(\delta_{n-1}+\la)}(X)\,
{}^{\delta_{n-1}}h^{-\delta_{n-1}}_{2 \delta_{n-1} - \s(\delta_{n-1}+\la)}(Y,-Z) \\
= \sum_{\la\subset\delta_{n-1}} Q_{\delta_n+\la}(X)\sum_{\s\in S_{n-1}}(-1)^\s\,
{}^{\delta_{n-1}}h^{-\delta_{n-1}}_{2 \delta_{n-1} - \s(\la+\delta_{n-1})}(Y,-Z) \\
= \sum_{\la\subset\delta_{n-1}} Q_{\delta_n+\la}(X) \,
S^{(\delta_{n-1},\delta_{n-1})}_{\delta_{n-1}/\la}(e(Y,-Z)).
\end{gather*}
Taking $\la=\delta_{n-1}$, $\mu=\la'$, $k=n$, $\ell = n-1$, and 
$t=(y_1,-z_1,\ldots,y_{n-1}, -z_{n-1})$ in Lemma \ref{goodlem} gives
\begin{equation}
\label{Sdual}
S^{(\delta_{n-1},\delta_{n-1})}_{\delta_{n-1}/\la}(e(Y,-Z)) =
S^{(\delta^\vee_{n-1},\delta^\vee_{n-1})}_{\delta_{n-1}/\la'}(h(Y,-Z)).
\end{equation}
The result now follows by using equations (\ref{Ctopeqfirst}) and
(\ref{Sdual}).
\end{proof}

It is easy to show (see for example \cite[\S 8.2]{IM}) that equation
(\ref{Ctopeq}) is equivalent to the Pfaffian formula for $\CS_{w_0}$
found in \cite[Thm.\ 1.2]{IMN1}.

\subsection{The Schubert polynomials indexed by maximal elements}
\label{ttotheta}

Consider a sequence $\fraka \ :\ a_1<\cdots < a_p$ of nonnegative
integers with $a_p<n$. The sequence $\fraka$ parametrizes a parabolic
subgroup $W_\fraka$ of $W_n$, generated by the simple reflections
$s_i$ for $i\notin\{a_1,\ldots, a_p\}$. We let $W_n^\fraka$ denote the
set of minimal length left $W_\fraka$-coset representatives. Recall
that
\[
W_n^\fraka = \{w\in W_n\ |\ \ell(ws_i) = \ell(w)+1,\  \forall\, i \notin
\{a_1,\ldots, a_p\}\}.
\]
Let $w_0(\fraka)$ denote the longest element in $W_n^\fraka$; we have
\[
w_0(\fraka)=\begin{cases}
\ov{a_2}\cdots\ov{1}\ov{a_3}\cdots \ov{a_2+1}\cdots\ov{n}\cdots\ov{a_p+1}
& \text{if $a_1=0$}, \\
1\cdots a_1\ov{a_2}\cdots\ov{a_1+1}\cdots\ov{n}\cdots\ov{a_p+1}
& \text{if $a_1>0$}.
\end{cases}
\]
It is known (see for example \cite[\S 2]{St}) that $W_n^\fraka$ is an order 
ideal of the left weak Bruhat order of $W_n$, and that $w_0(\fraka)$
is the unique maximal element of $W_n^\fraka$ under this ordering.

Fix an integer $k$ with $0\leq k < n$. The elements of the set
$W_n^{(k)}$ are the {\em $k$-Grassmannian} elements of $\wt{W}_n$.
Let $w^{(k,n)}=1\cdots k\, \ov{n}\cdots \ov{k+1}$ denote the longest
element of $W_n^{(k)}$. Following \cite[\S 6.2]{TW}, we will require a
formula analogous to (\ref{Ctopeq}) for the Schubert polynomial
$\CS_{w^{(k,n)}}(X\,;Y,Z)$, which maps to Kazarian's multi-Schur Pfaffian
formula from \cite[Thm.\ 1.1]{Ka}. Similar Pfaffian formulas for
the Schubert polynomials $\CS_{w_0(\fraka)}(X\,; Y,Z)$ were obtained
by Anderson and Fulton in \cite{AF1}. Ikeda and Matsumura \cite[\S
  8.2]{IM} gave proofs of these formulas by applying left difference
operators to the top Schubert polynomial $\CS_{w_0}$, and we will
follow that approach here.

For every $i\geq 0$, the operator $\partial_i:=\partial_i^z$ on
$\Gamma[Y,Z]$ satisfies the Leibnitz rule
\begin{equation}
\label{LeibR}
\partial_i(fg) = (\partial_if)g+(s_if)\partial_ig.
\end{equation}
The argument depends on the following basic lemmas.

\begin{lemma}[\cite{IM}, Lemma 5.4]
\label{ddlem}
Suppose that $p,r\in \Z$ and let $k\geq 0$. For all $i\geq 0$, we have 
\[
\partial_i ({}^kc_p^r)= 
\begin{cases}
{}^kc_{p-1}^{r+1} & \text{if $r=\pm i$}, \\
0 & \text{otherwise}.
\end{cases}
\]
\end{lemma}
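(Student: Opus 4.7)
The plan is to verify the lemma by a direct generating-function computation, treating the cases $i>0$ and $i=0$ separately because of the extra role the $X$-variables play under $\partial_0^z$.

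First I would organize the formula (\ref{ceq}) by introducing the generating series $C^{k,r}(t):=\sum_{p\geq 0}{}^kc^r_p\, t^p$. Writing $Q(X;t):=\sum_{n\geq 0}q_n(X)t^n$, $E^k(Y;t):=\prod_{j=1}^k(1+y_jt)$, and
\[
H^r(-Z;t)\;=\;\begin{cases}\prod_{j=1}^r(1+z_jt)^{-1} & r\geq 0,\\[2pt]\prod_{j=1}^{-r}(1-z_jt) & r<0,\end{cases}
\]
the definition of ${}^kc^r_p$ factorises as $C^{k,r}(t)=Q(X;t)\cdot E^k(Y;t)\cdot H^r(-Z;t)$. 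The lemma's claim is equivalent to $\partial_i^z C^{k,r}(t)=t\, C^{k,r+1}(t)$ when $r=\pm i$ and $0$ otherwise, after comparing coefficients of $t^p$.

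For $i>0$, the operator $\partial_i^z$ fixes $Q$ and $E^k$ and acts only on the $z$-variables, and specifically swaps $z_i\leftrightarrow z_{i+1}$ with denominator $z_{i+1}-z_i$ (the sign arising from $\partial_i^z=\omega\partial_i^y\omega$ with $\omega(y_j)=-z_j$). A short calculation with the product formulas above shows that $\partial_i^z H^r(-Z;t)$ vanishes whenever $H^r$ is symmetric in $z_i,z_{i+1}$ or independent of both, which leaves only $i=r>0$ and $i=-r>0$. In each of these two cases, factoring out the common part of the product and applying the elementary identity
\[
(1\pm z_rt)^{\mp 1}-(1\pm z_{r+1}t)^{\mp 1} \;=\; \pm(z_{r+1}-z_r)t\,\bigl[(1\pm z_rt)(1\pm z_{r+1}t)\bigr]^{\mp 1}
\]
yields $\partial_i^z H^r(-Z;t)=t\,H^{r+1}(-Z;t)$, giving the lemma.

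For $i=0$, the additional difficulty is that $s_0^z$ also acts on the $q_n(X)$, so one must invoke the Leibniz rule (\ref{LeibR}). Using the formula for $s_0(q_r(X))$ from \S\ref{spsdds}, summing over $r$, and telescoping the resulting geometric series produces the two identities
\[
s_0^z Q(X;t) \;=\; Q(X;t)\cdot\frac{1-z_1t}{1+z_1t},\qquad \partial_0^z Q(X;t) \;=\; Q(X;t)\cdot\frac{t}{1+z_1t}.
\]
A completely analogous computation (using $s_0^z\colon z_1\mapsto -z_1$) gives $\partial_0^z H^r(-Z;t)=-t(1-z_1t)^{-1}H^r(-Z;t)$ for every $r\neq 0$, while $\partial_0^z H^0=0$. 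Feeding these into $\partial_0^z(QH^r)=(\partial_0^z Q)H^r+(s_0^z Q)(\partial_0^z H^r)$, the case $r=0$ collapses cleanly to $t\, C^{k,1}(t)$, and for $r\neq 0$ the two Leibniz contributions are identical up to sign and cancel.

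The one place where care is needed is the sign bookkeeping in the divided difference conventions (in particular, that $\partial_i^z f=(f-s_i^zf)/(z_{i+1}-z_i)$ for $i>0$ and $\partial_0^z f=(f-s_0^zf)/(2z_1)$); once these are pinned down, every remaining step is a routine manipulation of rational power series in $t$.
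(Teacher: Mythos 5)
Your generating-function strategy is correct and the conclusions all check out. The factorization $C^{k,r}(t) = Q(X;t)E^k(Y;t)H^r(-Z;t)$ follows immediately from (\ref{ceq}) and the conventions $h^{-k}_i = e^k_i$, $h^r_j = e^{-r}_j$ for $r<0$; the derived conventions $\partial_i^z f = (f-s_i^zf)/(z_{i+1}-z_i)$ for $i>0$ and $\partial_0^z f=(f-s_0^zf)/(2z_1)$ follow correctly from unwinding $\partial_i^z = \omega\partial_i^y\omega$; and your computations of $s_0^zQ$, $\partial_0^zQ$, $\partial_0^z H^r$, and the Leibniz cancellation for $i=0$, $r\neq 0$ are all right. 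Since the paper cites \cite{IM}, Lemma 5.4 rather than reproducing an argument, this is a self-contained verification in the spirit of the cited source, which also proceeds by direct computation with the generating series.

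One error to flag in the displayed "elementary identity": it is correct with the upper signs (the case $r=i>0$), but false with the lower signs. Taking the lower signs, the left side is $(1-z_it)-(1-z_{i+1}t) = (z_{i+1}-z_i)t$, while the claimed right side is $-(z_{i+1}-z_i)t\,(1-z_it)(1-z_{i+1}t)$; these differ. The true statement in the $i=-r>0$ case is simply $(1-z_it)-(1-z_{i+1}t)=(z_{i+1}-z_i)t$, with no product factor on the right. Dividing by $z_{i+1}-z_i$ still gives $\partial_i^z H^r = tH^{r+1}$, so your conclusion survives, but the unified formula you wrote does not hold as stated. (You also index that display by $z_r,z_{r+1}$; when $r<0$ the variables in play are $z_i,z_{i+1}$ with $i=-r$, not $z_r$.)
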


\begin{lemma}[\cite{IM}, Lemma 8.2]
\label{imlem1}
Suppose that $i\geq 0$ and $k>0$. Then we have
\[
{}^kc_p^{-i} = {}^{k-1}c_p^{-i-1}+(z_{i+1}+y_k)\, {}^{k-1}c_{p-1}^{-i}.
\]
\end{lemma}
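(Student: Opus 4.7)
The identity is a direct calculation starting from the definition \eqref{ceq}, and I would prove it by telescoping through the intermediate term ${}^{k-1}c_p^{-i}$, namely by writing
\[
{}^kc_p^{-i} - {}^{k-1}c_p^{-i-1}
= \bigl({}^kc_p^{-i} - {}^{k-1}c_p^{-i}\bigr)
+ \bigl({}^{k-1}c_p^{-i} - {}^{k-1}c_p^{-i-1}\bigr).
\]
The point is that in ${}^kc_p^r$, the $k$-dependence lives only in the factor $h^{-k}_a(Y) = e_a(y_1,\dots,y_k)$, and the $r$-dependence (for $r=-i\le 0$) lives only in the factor $h^{-i}_b(-Z) = e_b(-z_1,\dots,-z_i)$. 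Each of these elementary symmetric polynomials satisfies a one-variable recursion.

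More precisely, the two recursions I would invoke are the standard identities
\[
h^{-k}_a(Y) - h^{-(k-1)}_a(Y) = y_k\, h^{-(k-1)}_{a-1}(Y),
\qquad
h^{-i}_b(-Z) - h^{-(i+1)}_b(-Z) = z_{i+1}\, h^{-i}_{b-1}(-Z),
\]
which follow immediately from $e_a(y_1,\dots,y_k)=e_a(y_1,\dots,y_{k-1})+y_k\,e_{a-1}(y_1,\dots,y_{k-1})$ (and its analogue with the $-z_j$). The only subtlety is sign bookkeeping in the $Z$-recursion, which works out because the missing variable is $-z_{i+1}$.

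Plugging the first recursion into the $a$-sum defining the difference ${}^kc_p^{-i}-{}^{k-1}c_p^{-i}$, all terms cancel except the shifted one, which after reindexing $a\mapsto a-1$ contributes $y_k\cdot {}^{k-1}c_{p-1}^{-i}$ (note the drop $p\mapsto p-1$ coming from the $q_{p-a-b}$ factor). Similarly, plugging the second recursion into the $b$-sum defining ${}^{k-1}c_p^{-i}-{}^{k-1}c_p^{-i-1}$ and reindexing $b\mapsto b-1$ gives $z_{i+1}\cdot {}^{k-1}c_{p-1}^{-i}$. Adding the two contributions yields $(y_k+z_{i+1})\cdot {}^{k-1}c_{p-1}^{-i}$, which is precisely the asserted identity after transposing ${}^{k-1}c_p^{-i-1}$.

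There is no real obstacle here: the identity is bilinear in the two elementary one-step recursions, and both reindexings are straightforward. The only thing to watch is the boundary case $i=0$, where $h^0_b(-Z)=\delta_{0b}$; one checks by direct inspection that the recursion $h^0_b(-Z)-h^{-1}_b(-Z)=z_1\,h^0_{b-1}(-Z)$ still holds term by term, so the argument above goes through unchanged.
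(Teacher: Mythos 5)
Your proof is correct, and it is worth noting that the paper itself does not prove this lemma at all---it simply cites Ikeda--Matsumura \cite[Lemma 8.2]{IM}---so there is no in-paper argument to compare against. Your telescoping decomposition through the intermediate term ${}^{k-1}c_p^{-i}$, combined with the two one-variable recursions $e_a(y_1,\dots,y_k) = e_a(y_1,\dots,y_{k-1}) + y_k\,e_{a-1}(y_1,\dots,y_{k-1})$ and the analogous recursion in $-z_{i+1}$, is a clean and complete derivation. One small observation: you flag $i=0$ as the boundary to check, but $k=1$ is a boundary in the other variable and is handled by exactly the same convention $h^0_a = \delta_{0a}$; it would be worth stating that explicitly. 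As an alternative packaging, the identity also drops out of the generating function
\[
\sum_{p\ge 0}{}^kc_p^{-i}\,\xi^p \;=\; \Bigl(\prod_{j\ge 1}\tfrac{1+x_j\xi}{1-x_j\xi}\Bigr)\prod_{a=1}^k(1+y_a\xi)\prod_{b=1}^i(1-z_b\xi),
\]
since dividing through by the series for ${}^{k-1}c_p^{-i}$ reduces the claim to $(1+y_k\xi)-(1-z_{i+1}\xi)=(y_k+z_{i+1})\xi$. This is the same computation you performed coefficientwise; the paper uses precisely this generating function elsewhere (in the proof of Lemma \ref{Dlem2}), so either form would fit naturally.
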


\begin{lemma}[\cite{IM}, Prop.\ 5.4]
\label{imlem2}
Suppose that $k,r\geq 0$ and $p> k+r$. Then we have
\[
{}^{(k_1,\ldots,k,k,\ldots,k_\ell)}
Q_{(p_1,\ldots,p,p,\ldots,p_\ell)}^{(i_1,\ldots,-r,-r,\ldots,i_\ell)} (c)= 0.
\]
\end{lemma}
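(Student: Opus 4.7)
The plan is to split the proof into two stages: a two-row base case settled by a direct generating-function computation, and a reduction of the general case to the base case via the Pfaffian expansion of the multi-Schur Pfaffian.

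For the base case $\ell=2$, pass to the generating function $C_{k,r}(t):=\sum_{p\geq 0}{}^k c^{-r}_p\,t^p$. From the definition (\ref{ceq}) together with the series defining $q_n(X)$, one has $C_{k,r}(t)=Q(t)\,\phi_{k,r}(t)$, where $Q(t):=\sum_{n\geq 0}q_n(X)\,t^n$ and $\phi_{k,r}(t):=\prod_{i=1}^k(1+y_it)\prod_{j=1}^r(1-z_jt)$. The identity $Q(t)Q(-t)=1$ then gives
\[
C_{k,r}(t)\,C_{k,r}(-t)=\phi_{k,r}(t)\phi_{k,r}(-t)=\prod_{i=1}^k(1-y_i^2t^2)\prod_{j=1}^r(1-z_j^2t^2),
\]
a polynomial of degree $2(k+r)$ in $t$. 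Expanding $\frac{1-R_{12}}{1+R_{12}}=1-2R_{12}+2R_{12}^2-\cdots$ identifies ${}^{(k,k)}Q^{(-r,-r)}_{(p,p)}(c)$, up to a sign, with the coefficient of $t^{2p}$ in the above product. The required vanishing for $p>k+r$ follows immediately.

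For general $\ell$, I would invoke the Pfaffian expansion of the multi-Schur Pfaffian,
\[
{}^\rho Q^\be_\al(c)=\Pf\!\Bigl({}^{(\rho_i,\rho_j)}Q^{(\be_i,\be_j)}_{(\al_i,\al_j)}(c)\Bigr)_{1\leq i<j\leq 2\ell'},
\]
a generalization of the Schur identity (\ref{giamQ}) (with a zero row and column appended if $\ell$ is odd). By the hypothesis that positions $s$ and $s+1$ carry identical data $(k,-r,p)$, the $s$-th and $(s+1)$-st rows of the underlying skew-symmetric matrix $M$ agree at every column $j\notin\{s,s+1\}$. The entry $M_{s,s+1}={}^{(k,k)}Q^{(-r,-r)}_{(p,p)}(c)$ vanishes by the base case, and by skew-symmetry so does $M_{s+1,s}$. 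With two identical rows (and identical columns, by skew-symmetry), simultaneous interchange of rows and columns $s,s+1$ preserves $M$ while flipping the sign of its Pfaffian, so $\Pf(M)=0$.

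The main obstacle is justifying the Pfaffian expansion used in the second stage. In the classical Schur $Q$-function case this is Schur's identity \cite{Sc}; in the present setting the factors ${}^k c^{-r}_p$ carry a multiplicative twist $\phi_{k,r}(t)$ by the $Y,Z$-variables, and one must verify that this twist passes through the raising-operator algebra compatibly with the Pfaffian expansion. Once this is in hand, the remaining steps are purely formal consequences of Pfaffian antisymmetry.
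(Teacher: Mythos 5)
Your proof is correct and follows the same strategy as Ikeda--Matsumura \cite[Prop.\ 5.4]{IM}, from which this lemma is quoted, and which the paper replicates essentially verbatim in proving the even orthogonal analogue, Lemma \ref{Dlem2}: a generating-function computation gives the two-column vanishing, and the general case reduces to it via the Pfaffian expansion and antisymmetry.

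The obstacle you flag at the end is not a real one; it is a routine formal consequence of Schur's identity. Set $g_i(\xi):=\sum_{n\geq 0}{}^{\rho_i}c^{\be_i}_n\,\xi^n$. Since $R_{ij}$ raises the $i$-th subscript and lowers the $j$-th, it corresponds under coefficient extraction to multiplication by $\xi_j/\xi_i$, so $\RR\,{}^\rho c^\be_\al$ is the coefficient of $\xi^\al$ in $\prod_i g_i(\xi_i)\prod_{i<j}\frac{\xi_i-\xi_j}{\xi_i+\xi_j}$. Schur's identity rewrites $\prod_{i<j}\frac{\xi_i-\xi_j}{\xi_i+\xi_j}$ as $\Pf\bigl(\frac{\xi_i-\xi_j}{\xi_i+\xi_j}\bigr)$, the prefactor $\prod_i g_i(\xi_i)$ slides inside by the scaling law $\Pf(a_ia_jb_{ij})=(\prod_k a_k)\Pf(b_{ij})$, and the coefficient extraction $[\xi^\al]$ may then be applied entry-by-entry because each $\xi_i$ appears in exactly one factor of every perfect-matching term of the Pfaffian. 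The ``twist'' by $\phi_{k,r}(\xi)$ is thus entirely absorbed into the series $g_i$ and causes no friction with the raising-operator formalism; the only hypothesis the argument uses is that ${}^{\rho_i}c^{\be_i}_n=0$ for $n<0$, which is immediate from (\ref{ceq}). With that in place, your Stage~2 antisymmetry argument goes through as written.
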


\begin{example}
Let $\delta^*_n:=(n,n-1,\ldots,2)\in \Z^{n-1}$.
For any integer sequence $\al=(\al_1,\ldots,\al_n)$, we have
\[
\partial_0({}^{n-1}c^{1-n}_{\al_1} \cdots {}^1c^{-1}_{\al_{n-1}}
{}^0 c^0_{\al_n}) = {}^{n-1}c^{1-n}_{\al_1} \cdots {}^1c^{-1}_{\al_{n-1}}
{}^0c^1_{\al_n-1}.
\]
It follows from this and equations (\ref{ddCeq}) and (\ref{Ctopeq})
that
\[
\CS_{s_0w_0}(X\,;Y,Z) = \partial_0 \, \CS_{w_0}(X\,;Y,Z)  = 
{}^{\delta_{n-1}}Q_{\delta^*_n+\delta_{n-1}}^{-\delta_{n-1}}(c).
\]
Arguing as in \S \ref{1stsec}, we can show that
\begin{equation}
\label{s0w0eq}
\CS_{s_0w_0}(X\,;Y,Z) =
\sum_{\la\subset\delta^*_n} Q_{\delta_{n-1}+\la}(X) 
S^{(\delta_{n-1},\delta_{n-1})}_{\delta^*_n/\la}(e(Y,-Z)).
\end{equation}
\end{example}

\begin{prop}
\label{wknprop}
We have
\begin{equation}
\label{wkneq}
\CS_{w^{(k,n)}}(X\,;Y,Z)={}^{(k,k,\ldots, k)}Q_{(n+k,n+k-1,\ldots, 2k+1)}^{(1-n,2-n,\ldots,-k)}(c).
\end{equation}
in $\Gamma[Y,Z]$.
\end{prop}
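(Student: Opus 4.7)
The plan is to follow the approach of Ikeda and Matsumura referred to in the paragraph preceding the lemmas, deriving Proposition \ref{wknprop} from the Pfaffian formula for $\CS_{w_0}$ in Proposition \ref{Ctop} by iterated application of the left divided difference operators $\partial_i^z$.

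First, I would identify a reduced word for the element $u \in W_n$ such that $u\cdot w^{(k,n)} = w_0$ is a reduced product, so that $\ell(u) = \ell(w_0)-\ell(w^{(k,n)}) = k^2+\binom{n-k}{2}$. A convenient choice uses simple reflections $s_i$ with $i\in\{0,\ldots,k-1\}\cup\{k+1,\ldots,n-1\}$, arranged so that the successive left multiplications peel off descents in the correct order. By (\ref{ddCeq}), the corresponding composition $\partial_{i_r}^z\cdots\partial_{i_1}^z$ sends $\CS_{w_0}$ to $\CS_{w^{(k,n)}}$, and the example of $\partial_0^z$ immediately preceding the proposition serves as the base case.

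Second, I would compute this composition on the Pfaffian expression $\CS_{w_0} = {}^{\delta_{n-1}}Q_{\delta_n+\delta_{n-1}}^{-\delta_{n-1}}(c)$. Since the operators $\partial_i^z$ are linear in the polynomial variables while the raising operators in $\RR$ act only on index sequences, the two commute, and $\partial_i^z$ may be pushed inside the Pfaffian and evaluated factorwise on monomials ${}^{\delta_{n-1}}c_{\gamma}^{-\delta_{n-1}}$ via the Leibniz rule (\ref{LeibR}) and Lemma \ref{ddlem}, which gives the clean action $\partial_i\,{}^k c_p^r = {}^k c_{p-1}^{r+1}$ exactly when $r=\pm i$. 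Between differentiations, I would apply Lemma \ref{imlem1} to rewrite factors ${}^k c_p^{-i}$ as ${}^{k-1}c_p^{-i-1}+(z_{i+1}+y_k)\,{}^{k-1}c_{p-1}^{-i}$, forcing the left-superscripts to descend progressively from the staircase $\delta_{n-1}$ toward the constant value $k$, and I would invoke Lemma \ref{imlem2} at each stage to discard terms with two consecutive rows sharing left-superscript $k'$, superscript $-r$, and subscript $p>k'+r$. This vanishing is the mechanism that cuts the Pfaffian from size $n$ to size $n-k$ and pins down the precise staircases $(n+k,n+k-1,\ldots,2k+1)$ and $(1-n,2-n,\ldots,-k)$ appearing in the statement.

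The main obstacle will be the bookkeeping in the second step: one must verify that a carefully chosen order of the divided differences causes all intermediate terms either to collapse by Lemma \ref{imlem2} or to vanish by Lemma \ref{ddlem}, leaving exactly the one multi-Schur Pfaffian ${}^{(k,\ldots,k)}Q^{(1-n,\ldots,-k)}_{(n+k,\ldots,2k+1)}(c)$. I expect this can be organized by induction on the number of divided differences already applied, matching the staircase shapes of the left-superscripts, superscripts, and subscripts at each intermediate stage against the Schubert polynomial $\CS_{s_{i_j}\cdots s_{i_1}w_0}$ produced by (\ref{ddCeq}).
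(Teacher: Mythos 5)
Your proposal follows the same route as the paper's own proof: begin with Proposition \ref{Ctop}, select a reduced word for $w_0 w^{(k,n)\,-1}$ using indices in $\{0,\ldots,k-1\}\cup\{k+1,\ldots,n-1\}$, and successively apply the left divided differences $\partial_i^z$ to the multi-Schur Pfaffian, using Lemma \ref{ddlem} for the clean action on a single factor, Lemma \ref{imlem1} to lower the left superscripts, and Lemma \ref{imlem2} to kill the spurious term that appears at each stage; the paper organizes the same computation into two blocks, first passing through $\CS_{v^{(k,n)}}$ (indices $1,\ldots,k-1$ and $k+1,\ldots,n-1$) and then applying $\partial_0(\partial_1\partial_0)\cdots(\partial_{k-1}\cdots\partial_0)$, together with the reduction ${}^{(\rho,r)}Q^{(\beta,b)}_{(\alpha,0)}(c)={}^{\rho}Q^{\beta}_{\alpha}(c)$ to discard trailing zero rows. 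One small caution about your justification: the commutation of $\partial_i^z$ with the raising-operator sum $\RR$ does not follow merely from linearity; it relies on the facts that $\RR$ only shifts subscripts while leaving superscripts fixed, that Lemma \ref{ddlem} makes the action of $\partial_i$ depend only on the superscript (so the same single factor is hit for every term in $\RR\,c^\beta_\alpha$), and that the remaining factors are $s_i$-invariant so the Leibniz rule collapses to a one-term product rule. With that point made precise, your outline matches the published argument.
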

\begin{proof}
If $v^{(k,n)}=\ov{k}\cdots \ov{1} \,\ov{n}\cdots \ov{k+1}$ is the
longest element in $W_n^{(0,k)}$, then we have a reduced factorization
$w_0= v_1v_2 v^{(k,n)}$, where
\begin{equation}
\label{eqv1}
v_1 := (s_{k-1} \cdots s_1)(s_{k-1} \cdots s_2)\cdots(s_{k-1}s_{k-2})s_{k-1}
\end{equation}
if $k\geq 2$, and $v_1:=1$, otherwise, while
\begin{equation}
\label{eqv2}
v_2 := (s_{n-1} \cdots s_{k+1})(s_{n-1} \cdots s_{k+2})
\cdots(s_{n-1}s_{n-2})s_{n-1}.
\end{equation}
Using (\ref{ddCeq}), this implies the equation
\[
\CS_{v^{(k,n)}} = \partial_{n-1} (\partial_{n-2}\partial_{n-1}) \cdots
(\partial_{k+1} \cdots \partial_{n-1})
\cdot\partial_{k-1}(\partial_{k-2}\partial_{k-1}) \cdots (\partial_1
\cdots \partial_{k-1}) \CS_{w_0}.
\]

Assume that $k\geq 2$, as the proof when $k\in \{0,1\}$ is easier.
Using Lemmas \ref{ddlem} and \ref{imlem1}, for any $p\in \Z$
we have 
\begin{equation}
\label{k1eq}
\partial_{k-1} {}^{k-1}c^{1-k}_p = {}^{k-1}c^{2-k}_{p-1}  = 
{}^{k-2}c^{1-k}_{p-1}+(z_{k-1}+y_{k-1}){}^{k-2}c^{2-k}_{p-2}.
\end{equation}
Let $\epsilon_j$ denote the $j$-th standard basis vector in 
$\Z^n$. The Leibnitz rule and (\ref{k1eq}) imply that for any
integer vector $\al=(\al_1,\ldots,\al_n)$, we have
\[
\partial_{k-1} {}^{\delta_{n-1}}c^{-\delta_{n-1}}_{\alpha} = 
{}^{\delta_{n-1}-\epsilon_{n+1-k}}c^{-\delta_{n-1}}_{\al-\epsilon_{n+1-k}}+
(z_{k-1}+y_{k-1}){}^{\delta_{n-1}-\epsilon_{n+1-k}}
c^{-\delta_{n-1}+\epsilon_{n+1-k}}_{\al - 2\epsilon_{n+1-k}}.
\]
We deduce from this and Lemma \ref{imlem2} that 
\begin{align*}
\partial_{k-1}
{}^{\delta_{n-1}}Q_{\delta_n+\delta_{n-1}}^{-\delta_{n-1}}(c) &= 
{}^{\delta_{n-1}-\epsilon_{n+1-k}}Q^{-\delta_{n-1}}_{\delta_n+\delta_{n-1}-\epsilon_{n+1-k}}(c) \\ 
&= {}^{(n-1,\ldots, k, k-2,k-2,\ldots, 1)}
Q_{(2n-1,\ldots,2k+1, 2k-2, 2k-3,\ldots,1)}^{(1-n,\ldots,-1,0)}(c).
\end{align*}
Iterating this calculation gives
\[
(\partial_1\cdots \partial_{k-1})\CS_{w_0} = 
{}^{(n-1,\ldots, k, k-2,k-3,\ldots, 0)}Q_{(2n-1,\ldots, 2k+1, 2k-2, 2k-4,\ldots,2,1)}^{(1-n,\ldots,-1,0)}(c)
\]
and furthermore
\[
\partial_{k-1}(\partial_{k-2}\partial_{k-1}) \cdots (\partial_1
\cdots \partial_{k-1}) \CS_{w_0} = 
{}^{(n-1,\ldots, k, 0,0,\ldots, 0)}Q_{(2n-1,\ldots, 2k+1, k, k-1,\ldots,1)}^{(1-n,\ldots,-1,0)}(c).
\]
Applying the operator $\partial_{n-1} (\partial_{n-2}\partial_{n-1}) \cdots
(\partial_{k+1} \cdots \partial_{n-1})$ to both sides of the above equation, 
we similarly obtain
\[
\CS_{v^{(k,n)}} =
{}^{(k,k,\ldots, k, 0,0,\ldots, 0)}Q_{(n+k,n+k-1,\ldots, 2k+1, k, k-1,\ldots,1)}^{(1-n,\ldots,-1,0)}(c).
\]

Since $v^{(k,n)} = (s_0\cdots s_{k-1}) \cdots (s_0s_1)s_0 w^{(k,n)}$, 
equation (\ref{ddCeq}) gives
\[
\CS_{w^{(k,n)}} = \partial_0 (\partial_1\partial_0) \cdots (\partial_{k-1}
\cdots\partial_0)\CS_{v^{(k,n)}}.
\]
Finally, since
${}^{(\rho,r)}Q^{(\be,b)}_{(\al,0)}(c) = {}^{\rho}Q^{\be}_{\al}(c)$
for any integers $r$ and $b$, it follows that
\begin{align*}
\CS_{w^{(k,n)}} &=  \partial_0 (\partial_1\partial_0) \cdots (\partial_{k-1}
\cdots\partial_0)\CS_{v^{(k,n)}} \\
&= \partial_0 (\partial_1\partial_0) \cdots (\partial_{k-2}
\cdots\partial_0){}^{(k,\ldots, k, 0,\ldots, 0)}Q_{(n+k,\ldots, 2k+1, k-1, \ldots,1)}
^{(1-n,\ldots,-k,2-k\ldots,0)}(c) \\
&=
{}^{(k,\ldots, k)}Q_{(n+k,\ldots, 2k+1)}^{(1-n,\ldots,-k)}(c).\\
\end{align*}
\end{proof}

Since $w^{(k,n)}=(w^{(k,n)})^{-1}$, the polynomial
$\CS_{w^{(k,n)}}(X\,;Y,Z)$ is symmetric in the $Z$ variables as
well as in the $Y$ variables, however this is not reflected in
equation (\ref{wkneq}). The next proposition makes this symmetry
apparent. Recall (for example from \cite[\S 6.2]{Fu3}) that for any
three partitions $\la,\mu$, and $\nu$, the {\em Littlewood-Richardson
  number} $N^\la_{\mu\nu}$ is the nonnegative integer defined by the
equation of Schur $S$-functions
\[
s_\mu(t)s_\nu(t) = \sum_\la N^\la_{\mu\nu} s_\la(t).
\]
Let $\mu_0:=(2k)^{n-k} = (2k,\ldots,2k)$, and for every $\mu\subset
\mu_0$, define $\mu^\vee:=(2k-\mu_{n-k},\ldots,2k-\mu_1)$. Note that
$(n+k,n+k-1,\ldots, 2k+1) = \delta_{n-k}+\mu_0$.

\begin{prop}
\label{wknprop2}
We have
\begin{align*}
\label{wkneq2}
\CS_{w^{(k,n)}}(X\,;Y,Z)&={}^{(k,\ldots, k)}Q_{\delta_{n-k}+\mu_0}^{(-k,\ldots,-k)}(c) \\
& = \sum_{\nu_1,\nu_2\subset\mu\subset \mu_0}N_{\nu_1\nu_2}^{\mu} 
Q_{\delta_{n-k}+\mu^\vee}(X)s_{\nu_1'}(Y_{(k)})s_{\nu_2'}(-Z_{(k)}),
\end{align*}
in $\Gamma[Y,Z]$, where $N_{\nu_1\nu_2}^\mu$ denotes a
Littlewood-Richardson number.
\end{prop}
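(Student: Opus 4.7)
The first equality amounts to showing
$${}^{(k,\ldots,k)}Q^{(1-n,\ldots,-k)}_{\delta_{n-k}+\mu_0}(c) = {}^{(k,\ldots,k)}Q^{(-k,\ldots,-k)}_{\delta_{n-k}+\mu_0}(c),$$
and my plan is to exploit the fact that $\CS_{w^{(k,n)}}(X\,;Y,Z)$ involves only $z_1,\ldots,z_k$. Since $(w^{(k,n)})^{-1}=w^{(k,n)}$ has its unique descent at $k$, the descent-symmetry property implies that this polynomial is symmetric under arbitrary transpositions of $z_{k+1},z_{k+2},\ldots\,$; as a polynomial, it therefore involves none of these variables. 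Granted Proposition~\ref{wknprop}, the formula there is invariant under the specialization $z_{k+1}=\cdots=z_{n-1}=0$. Under this specialization, for each $i\geq k$ one has $h^{-i}_j(-Z)=e^i_j(-z_1,\ldots,-z_i)=e^k_j(-Z)$, so ${}^kc_p^{-i}$ collapses to ${}^kc_p^{-k}$, which turns the superscript sequence $(1-n,\ldots,-k)$ into $(-k,\ldots,-k)$, as required.

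For the second equality, I would begin with the identity $\sum_{a+b=m} e^k_a(Y)\,e^k_b(-Z) = e_m(V)$, where $V:=Y_{(k)}\cup(-Z_{(k)})$, which lets me rewrite ${}^kc_p^{-k} = \sum_m q_{p-m}(X)\,e_m(V)$. Applying the raising operator $\RR$ and setting $\alpha:=\delta_{n-k}+\mu_0=(n+k,\ldots,2k+1)$, this yields
\[
{}^{(k,\ldots,k)}Q^{(-k,\ldots,-k)}_{\alpha}(c) = \sum_{\mu} Q_{\alpha-\mu}(X)\,\prod_{i=1}^{n-k} e_{\mu_i}(V),
\]
summed over compositions $\mu=(\mu_1,\ldots,\mu_{n-k})$ (with $e_{\mu_i}(V)=0$ unless $0\leq \mu_i\leq 2k$). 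Since each entry of $\alpha-\mu$ then lies in $[1,n+k]$, the alternating property of Schur $Q$-functions applies, and I would group terms by the strict partition $\lambda=\delta_{n-k}+\nu$ (with $\nu\subset\mu_0$) to which $\alpha-\mu$ rearranges. For fixed $\nu$, writing $\beta_i:=\alpha_i-\lambda_i=2k-\nu_i$, the sum over signed permutations producing $\lambda$ becomes
\[
\sum_{\sigma\in S_{n-k}} \mathrm{sgn}(\sigma)\prod_i e_{\sigma(i)-i+\beta_{\sigma(i)}}(V) = \det\!\bigl(e_{\beta_j+j-i}(V)\bigr)_{i,j=1}^{n-k},
\]
and row and column reversals turn this into $\det(e_{\nu^\vee_i-i+j}(V))_{i,j}$, which by Jacobi--Trudi equals $s_{(\nu^\vee)'}(V)$.

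Putting this together and reparametrizing $\mu:=\nu^\vee$ gives
\[
{}^{(k,\ldots,k)}Q^{(-k,\ldots,-k)}_{\alpha}(c) = \sum_{\mu\subset\mu_0} Q_{\delta_{n-k}+\mu^\vee}(X)\,s_{\mu'}(V),
\]
after which the classical splitting
$s_{\mu'}(Y_{(k)}\cup(-Z_{(k)})) = \sum_{\nu_1,\nu_2} N^\mu_{\nu_1\nu_2}\,s_{\nu_1'}(Y_{(k)})\,s_{\nu_2'}(-Z_{(k)})$,
which relies on the conjugation symmetry $c^{\mu'}_{\nu_1'\nu_2'}=N^\mu_{\nu_1\nu_2}$ of Littlewood--Richardson coefficients, delivers the claimed formula. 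The main obstacle is the antisymmetrization-and-determinant step: one must keep signs and index reversals straight to recognize the resulting determinant as a Jacobi--Trudi expression in the reversed partition $\nu^\vee$; once that bookkeeping is done, the rest is a direct application of standard identities.
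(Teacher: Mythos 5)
Your proposal is correct, and the key novelty is in how you obtain the first equality. The paper proves both equalities simultaneously by expanding ${}^{(k,\ldots,k)}Q^{(1-n,\ldots,-k)}_{\delta_{n-k}+\mu_0}(c)$ with the full flagged alphabet (where $Z$ varies from $Z_{(n-1)}$ down to $Z_{(k)}$) and then establishing, following \cite[(3.4)]{M2}, that the resulting flagged determinant $\det(e_{2k-\mu_j+j-i}^{(n+k-1,\ldots,2k)}(Y_{(k)},-Z))$ factors as a unitriangular matrix in the extra $z$'s times $\det(e_{2k-\mu_j+j-i}(Y_{(k)},-Z_{(k)}))$, so the two determinants agree. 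You instead observe that $\CS_{w^{(k,n)}}$, being symmetric in $z_{k+1},z_{k+2},\ldots$ by the descent characterization (\ref{ddCeq}) and a polynomial, must be independent of $z_{>k}$, and that the specialization $z_{>k}=0$ literally converts each ${}^kc^{-i}_p$ with $i\geq k$ into ${}^kc^{-k}_p$. This is shorter and more conceptual, trading the explicit matrix manipulation for the a priori symmetry built into the Schubert polynomial; it has the small cost of invoking descent-set theory where the paper stays purely at the level of determinant identities. Your derivation of the second equality then runs from the constant-superscript form exactly as the paper does: expand via the alternating property of $Q_\gamma$, recognize the signed permutation sum as the dual Jacobi--Trudi determinant $s_{(\nu^\vee)'}(Y_{(k)},-Z_{(k)})$, reparametrize $\mu=\nu^\vee$, and split by Littlewood--Richardson coefficients using $N^{\mu'}_{\nu_1'\nu_2'}=N^{\mu}_{\nu_1\nu_2}$. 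This is essentially the same computation (the paper invokes $S_{\mu_0/\mu}=S_{\mu^\vee}$ and \cite[I.(5.9)]{M1} where you use a row/column reversal, but the content is identical).
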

\begin{proof}
For any integer vector $\gamma=(\gamma_1,\ldots,\gamma_{n-k})$, we have
\begin{equation}
\label{kceq}
{}^{(k,\ldots, k)}c_{\gamma}^{(1-n,\ldots,-k)} = \sum_{0\leq\al\leq
  \delta_{n-k-1}+\mu_0}q_{\gamma-\al}(X)e^{(n+k-1,\ldots,2k)}_\al(Y_{(k)},-Z)
\end{equation}
where the alphabet $Y_{(k)}$ in the factor
$e^{(n+k-1,\ldots,2k)}_\al(Y_{(k)},-Z)$ is constant, while $Z$ varies
down from $Z_{(n-1)}$ to $Z_{(k)}$. Now using (\ref{wkneq}) and
(\ref{kceq}) while applying the raising operator $\RR$, along with the
alternating property of Schur $Q$-functions, gives
\begin{align*}
\CS_{w^{(k,n)}}(X\,;Y,Z) &= 
\sum_{0\leq \al\leq \delta_{n-k-1}+\mu_0}
Q_{\delta_{n-k}+\mu_0-\al}(X)e^{(n+k-1,\ldots,2k)}_\al(Y_{(k)},-Z) \\
&= \sum_{\mu\subset \mu_0}Q_{\delta_{n-k}+\mu}(X) 
\det(e_{2k-\mu_j+j-i}^{(n+k-1,\ldots,2k)}(Y_{(k)},-Z))_{1\leq i,j\leq n-k} \\
&= \sum_{\mu\subset \mu_0}Q_{\delta_{n-k}+\mu}(X)
S_{\mu_0/\mu}^{(n+k-1,\ldots,2k)}(e(Y_{(k)},-Z)).
\end{align*}

We claim that for each partition $\mu\subset \mu_0$, we have
\begin{align*}
\det(e_{2k-\mu_j+j-i}^{(n+k-1,\ldots,2k)}(Y_{(k)},-Z)) &=
\det(e_{2k-\mu_j+j-i}^{(2k,\ldots,2k)}(Y_{(k)},-Z_{(k)})) \\
&= S_{\mu_0/\mu}(e(Y_{(k)},-Z_{(k)})).
\end{align*}
The proof of this follows \cite[(3.4)]{M2}. For each $i,j$ 
with $1\leq i,j \leq n-k$,
\begin{align*}
e^{n+k-i}_{2k-\mu_j-i+j}(Y_{(k)},-Z) &= 
e_{2k-\mu_j-i+j}(Y_{(k)},-Z_{n-i}) \\
&= \sum_{p=1}^{n-k}e_{p-i}(-B_i)
e_{2k-\mu_j+j-p}(Y_{(k)},-Z_{(k)}),
\end{align*}
where $B_i=(z_{k+1},\ldots,z_{n-i})$ (in particular, $B_{n-k}=\emptyset$). 
Therefore the matrix
\[
\{e^{n+k-i}_{2k-\mu_j-i+j}(Y_{(k)},-Z)\}_{1\leq i,j\leq n-k}
\]
is the product of the matrix
\[
\{e_{p-i}(-B_i)\}_{1\leq i,p\leq n-k},
\]
which is unitriangular, and the matrix
\[
\{e_{2k-\mu_j+j-p}(Y_{(k)},-Z_{(k)})\}_{1\leq p,j\leq n-k}.
\]
Taking determinants completes the proof of the claim. 

Since
$S_{\mu_0/\mu}(e(Y_{(k)},-Z_{(k)}))=S_{\mu^\vee}(e(Y_{(k)},-Z_{(k)}))$,
we deduce that
\begin{align*}
\CS_{w^{(k,n)}}(X\,;Y,Z) &= \sum_{\mu\subset \mu_0}Q_{\delta_{n-k}+\mu}(X)
S_{\mu^\vee}(e(Y_{(k)},-Z_{(k)})) \\
&= {}^{(k,\ldots, k)}Q_{\delta_{n-k}+\mu_0}^{(-k,\ldots,-k)}(c).
\end{align*}
Furthermore, using \cite[I.(5.9)]{M1}, we compute that
\begin{align*}
\CS_{w^{(k,n)}}(X\,;Y,Z)
&= \sum_{\mu\subset \mu_0}Q_{\delta_{n-k}+\mu^\vee}(X)
S_{\mu}(e(Y_{(k)},-Z_{(k)})) \\
&= \sum_{\mu\subset \mu_0}Q_{\delta_{n-k}+\mu^\vee}(X)
s_{\mu'}(Y_{(k)},-Z_{(k)}) \\
&= \sum_{\nu\subset\mu\subset \mu_0}Q_{\delta_{n-k}+\mu^\vee}(X)
s_{\mu'/\nu'}(Y_{(k)})s_{\nu'}(-Z_{(k)}) \\
&= \sum_{\nu_1,\nu_2\subset\mu\subset \mu_0}N_{\nu_1'\nu_2'}^{\mu'} 
Q_{\delta_{n-k}+\mu^\vee}(X)s_{\nu_1'}(Y_{(k)})s_{\nu_2'}(-Z_{(k)}).
\end{align*}
Since we have $N_{\nu_1'\nu_2'}^{\mu'}=N_{\nu_1\nu_2}^\mu$, the result follows.
\end{proof}

Although we only require the formula for $\CS_{w^{(k,n)}}(X\,;Y,Z)$, we 
will record the general result here for comparison with the orthogonal
case, which is discussed in \S \ref{ttoeta}.
According to \cite{AF1} and \cite[Thm.\ 8.2]{IM}, we have
\[
\CS_{w_0(\fraka)}(X\,; Y,Z) = 
{}^{\rho(\fraka)}Q^{\be(\fraka)}_{\la(\fraka)}(c),
\]
where $\la(\fraka)$, $\be(\fraka)$, and $\rho(\fraka)$ denote the sequences
\[
\la(\fraka)=(n+a_p,\ldots,2a_p+1,\ldots,
a_i+a_{i+1},\ldots,2a_i+1,\ldots,a_1+a_2,\ldots,2a_1+1)\,;
\]
\[
\be(\fraka)=(1-n,\ldots,-a_p,\ldots,
1-a_{i+1},\ldots,-a_i,\ldots,1-a_2,\ldots,-a_1)\,;
\]
and
\[
\rho(\fraka)=(a_p^{n-a_p},\ldots,a_i^{a_{i+1}-a_i},\ldots,a_1^{a_2-a_1}).
\]

\subsection{Theta polynomials}
\label{tps}

The next step in this program is to prove formulas for the Schubert
polynomials indexed by the $k$-Grassmannian elements of $W_\infty$. We
will see that they may be expressed using {\em theta polynomials}.

We say that a partition $\la$ is {\em $k$-strict} if no part greater
than $k$ is repeated, that is, $\la_j>k$ implies $\la_{j+1}<\la_j$ for
each $j\geq 1$.  There is an explicit bijection between
$k$-Grassmannian elements $w$ of $W_\infty$ and $k$-strict partitions
$\la$, such that the elements in $W_n$ correspond to those partitions
whose diagram fits inside an $(n-k)\times (n+k)$ rectangle.  According
to \cite[\S 4.1 and \S 4.4]{BKT1}, if the element $w$ corresponds to the
$k$-strict partition $\la$, then the bijection is given by the
equations
\[
\la_i=\begin{cases} 
|w_{k+i}|+k & \text{if $w_{k+i}<0$}, \\
\#\{p\leq k\, :\, w_p> w_{k+i}\} & \text{if $w_{k+i}>0$}.
\end{cases}
\]
Using the above bijection, we attach to any $k$-strict partition $\la$
a finite set of pairs 
\begin{equation}
\label{Cweq}
\cC(\la) := \{ (i,j)\in \N\times\N \ |\ 1\leq i<j \ \ \text{and} \ \ 
 w_{k+i}+ w_{k+j} < 0 \}
\end{equation}
and a sequence $\beta(\la)=\{\beta_j(\la)\}_{j\geq 1}$ defined by
\begin{equation}
\label{css}
\beta_j(\la):=\begin{cases}
w_{k+j}+1 & \text{if $w_{k+j}<0$}, \\
w_{k+j} & \text{if $w_{k+j}>0$}.
\end{cases}
\end{equation}

Following \cite{BKT2}, let $\la$ be any $k$-strict partition, and
consider the raising operator expression $R^\la$ given by
\begin{equation}
\label{Req}
R^{\la} := \prod_{i<j}(1-R_{ij})\prod_{(i,j)\in\cC(\la)}
(1+R_{ij})^{-1}.
\end{equation}
For any integer sequences $\al$ and $\be$, define $c_{\al}^\be:=
\prod_i {}^kc_{\al_i}^{\be_i}$. According to \cite{TW, W}, the {\em
  double theta polynomial} $\Ti_\la(X\,; Y_{(k)},Z)$ is defined by
\begin{equation}
\label{thetadef}
\Ti_\la(X\,; Y_{(k)},Z) := R^\la\,c^{\be(\la)}_{\la}.
\end{equation}
The single theta polynomial $\Ti_\la(X\,;Y_{(k)})$ of \cite{BKT2} is given by
\[
\Ti_\la(X\,;Y_{(k)}):=\Ti_\la(X\,; Y_{(k)},0).
\] 
Note that we are working here with the images of the theta polynomials
$\Ti_\la(c)$ and $\Ti_\la(c\, |\, t)$ from \cite{BKT2, TW} in the ring
$\Gamma[Y,Z]$ of double Schubert polynomials, following \cite{T5, T6}.

Fix a rank $n$ and let $$\la_0:=(n+k,n+k-1,\ldots,2k+1)$$ be the
$k$-strict partition associated to the $k$-Grassmannian element 
$w^{(k,n)}$ of maximal length in $W_n$. We deduce from Proposition 
\ref{wknprop} and the definition (\ref{thetadef}) that 
\begin{equation}
\label{ceqt}
\CS_{w^{(k,n)}}(X\,; Y,Z) = \Ti_{\la_0}(X\,; Y_{(k)},Z)
\end{equation}
in $\Gamma[Y,Z]$.

It follows from \cite[Lemma 5.5]{IM} that for all $i\geq 1$ and indices 
$p$ and $q$, we have
\[
\partial_i\,c_{(p,q)}^{(-i,i)} = c_{(p-1,q)}^{(-i+1,i+1)} +
c_{(p,q-1)}^{(-i+1,i+1)} = (1+R_{12}) \, c_{(p-1,q)}^{(-i+1,i+1)}.
\]
Using this identity and Lemma \ref{ddlem}, it is shown in
\cite[Prop.\ 5]{TW} that if $\la$ and $\mu$ are $k$-strict partitions
such that $|\la|=|\mu|+1$ and $w_\la=s_iw_{\mu}$ for some simple
reflection $s_i\in W_\infty$, then we have
\begin{equation}
\label{pareq}
\partial_i \Ti_\la(X\,; Y_{(k)},Z)  = \Ti_{\mu}(X\,; Y_{(k)},Z)
\end{equation}
in $\Gamma[Y,Z]$.  Now (\ref{ceqt}) and (\ref{pareq}) imply that for
any $k$-strict partition $\la$ with associated $k$-Grassmannian
element $w_\la$, we have
\[
\CS_{w_\la}(X\,; Y,Z) = \Ti_{\la}(X\,; Y_{(k)},Z)
\]
in $\Gamma[Y,Z]$. In particular, we recover the equality 
\begin{equation}
\label{ceqtsing}
\CS_{w_\la}(X\,; Y) =  \Ti_{\la}(X\,; Y_{(k)})
\end{equation}
in $\Gamma[Y]$ from \cite[Prop.\ 6.2]{BKT2} for the single polynomials.

\subsection{Mixed Stanley functions and splitting formulas}
\label{msfsfsC}

Following \cite[\S 2]{T5}, for any $w\in W_\infty$, the 
{\em double mixed Stanley function} $J_w(X\,;Y/Z)$ is defined by
the equation
\[
J_w(X\,;Y/Z) := \langle \tilde{A}(Z)C(X)A(Y), w\rangle = 
\sum_{uv\om=w}G_{u^{-1}}(-Z)F_v(X)G_\om(Y),
\]
where the sum is over all reduced factorizations $uv\om=w$ with
$u,\om\in S_\infty$. The single mixed Stanley function $J_w(X\,;Y)$ is
given by setting $Z=0$ in $J_w(X\,;Y/Z)$. Observe that $J_w(X\,;Y/Z)$ is
separately symmetric in the three sets of variables $X$, $Y$, and $Z$, 
and that we have $J_w(X\,;0)=F_w(X)$.

Fix an integer $k\geq 0$.  We say that an element $w\in W_\infty$ is
{\em increasing up to $k$} if $0 < w_1 < w_2 < \cdots < w_k$ (this
condition is automatically true if $k=0$).  If $w$ is increasing up to
$k$, then \cite[Eqn.\ (2.5)]{BH} and equation (\ref{keyid}) have a
natural analogue for the {\em restricted mixed Stanley function}
$J_w(X\,;Y_{(k)})$, which is obtained from $J_w(X;Y)$ after setting
$y_i=0$ for $i>k$. In this case, according to \cite[Prop.\ 5]{T5}, we
have
\begin{equation}
\label{keyidC}
\CS_w(X\,;Y)= 
\sum_{v(1_k\times \om) = w}J_v(X\,;Y_{(k)})\AS_\om(y_{k+1},y_{k+2},\ldots),
\end{equation}
where the sum is over all reduced factorizations $v(1_k\times \om) =
w$ in $W_\infty$ with $\om\in S_\infty$. Moreover, there is a double
version of equation (\ref{keyidC}) which is parallel to
(\ref{keyid2}). Let $J_v(X\,;Y_{(k)}/Z_{(\ell)})$ denote the power
series obtained from $J_v(X\,;Y/Z)$ by setting $y_i=z_j=0$ for all
$i>k$ and $j>\ell$. Then if $w$ is increasing up to $k$ and $w^{-1}$
is increasing up to $\ell$, we have
\begin{equation}
\label{keyidC2}
\CS_w(X\,;Y,Z)= 
\sum \AS_{u^{-1}}(-Z_{>\ell}) J_v(X\,;Y_{(k)}/Z_{(\ell)})\AS_\om(Y_{>k}),
\end{equation}
where $Y_{>k}:=(y_{k+1},y_{k+2},\ldots)$,
$-Z_{>\ell}:=(-z_{\ell+1},-z_{\ell+2},\ldots)$, and the sum is over
all reduced factorizations $(1_{\ell}\times u)v(1_k\times \om) = w$ in
$W_\infty$ with $u,\om\in S_\infty$.

We say that an element $w\in W_\infty$ is {\em compatible} with
the sequence $\fraka \, :\, a_1 < \cdots < a_p$ of elements of $\N_0$
if all descent positions of $w$ are contained in $\fraka$.  Let
$\frakb \, :\, b_1 < \cdots <b_q$ be a second sequence of elements of 
$\N_0$ and assume that $w$ is compatible with $\fraka$ and
$w^{-1}$ is compatible with $\frakb$. We say that a reduced
factorization $u_1\cdots u_{p+q-1} = w$ is {\em compatible} with
$\fraka$, $\frakb$ if $u_i\in S_\infty$ for all $i\neq q$, 
$u_j(i)=i$ whenever $j<q$ and $i\leq b_{q-j}$ or
whenever $j>q$ and $i \leq a_{j-q}$.
Set $Y_i := \{y_{a_{i-1}+1},\ldots,y_{a_i}\}$ for each $i\geq 1$ and
$Z_j := \{z_{b_{j-1}+1},\ldots,z_{b_j}\}$ for each $j\geq 1$. 

\begin{prop}
\label{split2prop}
Suppose that $w$ and $w^{-1}$ are compatible with $\fraka$ and 
$\frakb$, respectively. Then
the Schubert polynomial $\CS_w(X\,;Y,Z)$ satisfies
\[
\CS_w = \sum
G_{u_1}(0/Z_q)\cdots G_{u_{q-1}}(0/Z_2)
J_{u_q}(X\,;Y_1/Z_1)G_{u_{q+1}}(Y_2) \cdots G_{u_{p+q-1}}(Y_p)
\]
summed over all reduced factorizations $u_1\cdots u_{p+q-1} = w$
compatible with $\fraka$, $\frakb$. 
\end{prop}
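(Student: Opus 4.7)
The plan is to follow the proof of Proposition \ref{split1A} step by step, with (\ref{keyidC2}) playing the role of (\ref{keyid2}). The compatibility of $w$ with $\fraka$ means that $w$ has no descents at positions below $a_1$, so $w$ is increasing up to $a_1$; dually, $w^{-1}$ is increasing up to $b_1$. Applying (\ref{keyidC2}) with $k = a_1$ and $\ell = b_1$ gives
\[
\CS_w(X\,;Y,Z) = \sum \AS_{\tilde u^{-1}}(-Z_{>b_1})\, J_{\tilde v}(X\,; Y_1/Z_1)\, \AS_{\tilde \om}(Y_{>a_1}),
\]
summed over reduced factorizations $(1_{b_1}\times \tilde u)\,\tilde v\,(1_{a_1}\times \tilde \om) = w$ with $\tilde u, \tilde \om \in S_\infty$. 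The middle factor already has the form $J_{u_q}(X\,; Y_1/Z_1)$ of the target formula, with the identification $u_q := \tilde v$.

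It remains to expand the two residual type A Schubert polynomials into products of Stanley symmetric functions. For $\AS_{\tilde \om}(Y_{>a_1})$, the compatibility of $w$ with $\fraka$ passes to $\tilde \om$, now relative to the shifted sequence $a_2 - a_1 < \cdots < a_p - a_1$. Iterating (\ref{keyid}) exactly as in the proof of Proposition \ref{split1A} expresses $\AS_{\tilde \om}(Y_{>a_1})$ as a sum of products $G_{u_{q+1}}(Y_2) \cdots G_{u_{p+q-1}}(Y_p)$ indexed by appropriate reduced factorizations of $\tilde \om$.

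For $\AS_{\tilde u^{-1}}(-Z_{>b_1})$, the same iteration applies after the substitution $t_j = -z_{b_1+j}$, now with the sequence $b_2 - b_1 < \cdots < b_q - b_1$. The elementary identity $G_\sigma(0/Z) = G_{\sigma^{-1}}(-Z)$, obtained by setting $Y = 0$ in the definition of $G_\sigma(Y/Z)$ and noting that only the term with trivial $v$ survives, converts each $G_\sigma(-Z_j)$ factor produced by the iteration into a factor of the form $G_{\sigma^{-1}}(0/Z_j)$. After relabeling the indices, this yields the expansion
\[
\AS_{\tilde u^{-1}}(-Z_{>b_1}) = \sum G_{u_1}(0/Z_q) \cdots G_{u_{q-1}}(0/Z_2).
\]

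Substituting these two expansions back into the outer formula produces exactly the right-hand side of the proposition. The remaining point is to check that the three nested sums (over $(\tilde u, \tilde v, \tilde \om)$ together with the sub-factorizations of $\tilde u$ and $\tilde \om$) reassemble into a single sum over reduced factorizations $u_1 \cdots u_{p+q-1} = w$ compatible with $\fraka, \frakb$. The fixed-point conditions demanded by the proposition emerge layer by layer from the factors $(1_\ast\times\cdot)$ inserted by each application of (\ref{keyidC2}) and (\ref{keyid}). This bookkeeping is the principal (though routine) obstacle, and mirrors the corresponding verification in Proposition \ref{split1A}; minor care is required in the boundary cases $a_1 = 0$ or $b_1 = 0$, where $Y_1$ or $Z_1$ is empty and the middle factor $J_{u_q}(X\,;Y_1/Z_1)$ degenerates accordingly.
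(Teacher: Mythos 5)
Your proposal is correct and follows essentially the same route as the paper: the paper's one-line proof is precisely "combine (\ref{keyidC2}) with iteration of (\ref{keyid})," and your write-up fills in exactly that skeleton, including the correct observation that compatibility with $\fraka$ (resp.\ $\frakb$) descends to the residual right (resp.\ left) type A factor because right (resp.\ left) descent sets decrease along reduced factorizations, and the identity $G_\sigma(0/Z)=G_{\sigma^{-1}}(-Z)$ used to put the $Z$-factors in the stated form.
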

\begin{proof}
The result is established
by combining the identity (\ref{keyid}) with (\ref{keyidC2}).
\end{proof}

If $w$ is increasing up to $k$, then the following generalization of
equation (\ref{CStan0}) holds (see \cite[Thm.\ 1]{T5}):
\begin{equation}
\label{CmStan}
J_w(X\,;Y_{(k)}) = \sum_{\la\, :\, |\la| = \ell(w)} e^w_{\la}\,\Ti_{\la}(X;Y_{(k)}),
\end{equation}
where the sum is over $k$-strict partitions $\la$ with $|\la| =
\ell(w)$. The {\em mixed Stanley coefficients} $e^w_{\la}$ in
(\ref{CmStan}) are nonnegative integers. In fact, to any $w\in
W_\infty$ increasing up to $k$ we associate a {\em $k$-transition
  tree} $T^k(w)$ whose leaves are $k$-Grassmannian elements, and
$e^w_{\la}$ is equal to the number of leaves of the tree $T^k(w)$
which have shape $\la$. The proof of (\ref{CmStan}) in \cite{T5} is a
straightforward application of Billey's transition equations for
symplectic flag varieties \cite{B} combined with equation
(\ref{ceqtsing}).

Assume that $w$ is increasing up to $k$ and 
$w^{-1}$ is increasing up to $\ell$.
At present there is no clear analogue of equation (\ref{CmStan}) 
for the (restricted) double mixed Stanley function
$J_w(X\,;Y_{(k)}/Z_{(\ell)})$. However, we have
\begin{align}
\label{Jeq1}
J_w(X\,;Y_{(k)}/Z_{(\ell)}) 
&= \sum_{uv=w}G_{u^{-1}}(-Z_{(\ell)})J_v(X\,;Y_{(k)}) \\
\label{Jeq2}
&= \sum_{uv=w^{-1}}G_{u^{-1}}(Y_{(k)})J_v(X\,;-Z_{(\ell)}),
\end{align}
where the factorizations under the sum signs are reduced with $u\in
S_\infty$. We can now use equations (\ref{Geq}) and (\ref{CmStan}) in
(\ref{Jeq1}) and (\ref{Jeq2}) to obtain two dual expansions of
$J_w(X\,;Y_{(k)}/Z_{(\ell)})$ as a positive sum of products of
Schur $S$-polynomials with theta polynomials.

\begin{example}
\label{Jex}
Let $w=231=s_1s_2\in W_3$ and take $k=\ell=1$. We have
\[
\CS_{231}(X\,;Y,Z) = q_2(X)+q_1(X)(y_1+y_2-z_1) +(y_1-z_1)(y_2-z_1)
\]
and hence $J_{231}(X\,; Y_{(1)}/Z_{(1)}) =  q_2(X)+q_1(X)(y_1-z_1) -(y_1-z_1)z_1$.
Equality (\ref{Jeq1}) gives
\begin{align*}
J_{231}(X\,; Y_{(1)}/Z_{(1)})
&= J_{231}(X\,;Y_{(1)}) + G_{213}(-Z_{(1)})J_{132}(X\,;Y_{(1)}) + G_{312}(-Z_{(1)}) \\
&= \Ti_2(X\,;y_1) + s_1(-z_1)\Ti_1(X\,;y_1) + s_2(-z_1) \\
&= (q_2(X)+q_1(X)y_1) + (-z_1)(q_1(X)+y_1) + z_1^2,
\end{align*}
while equality (\ref{Jeq2}) gives
\begin{align*}
J_{231}(X\,; Y_{(1)}/Z_{(1)}) 
&= J_{312}(X\,;-Z_{(1)}) + G_{132}(Y_{(1)})J_{213}(X\,;-Z_{(1)})
+ G_{231}(Y_{(1)}) \\
&= \Ti_{(1,1)}(X\,;-z_1) + s_1(y_1)\Ti_1(X\,;-z_1) + s_{(1,1)}(y_1) \\
&= (q_2(X)-q_1(X)z_1+z_1^2) + y_1(q_1(X)-z_1).
\end{align*}
\end{example}

\begin{thm}[\cite{T5}, Cor.\ 1]
\label{Csplitthm}
Suppose that $w$ is compatible with $\fraka$ and $w^{-1}$ is compatible
with $\frakb$, where $b_1=0$. Then we have
\begin{equation}
\label{dbCSsplitting}
\CS_w = \sum_{\underline{\la}} 
f^w_{\underline{\la}}\,
s_{\la^1}(0/Z_q)\cdots s_{\la^{q-1}}(0/Z_2)
\Ti_{\la^q}(X\,;Y_1)s_{\la^{q+1}}(Y_2)\cdots s_{\la^{p+q-1}}(Y_p)
\end{equation}
summed over all sequences of partitions
$\underline{\la}=(\la^1,\ldots,\la^{p+q-1})$ with $\la^q$ $a_1$-strict,
where 
\begin{equation}
\label{dbfdef0}
f^w_{\underline{\la}} := \sum_{u_1\cdots u_{p+q-1} = w}
a_{\la^1}^{u_1}\cdots a_{\la^{q-1}}^{u_{q-1}}
e_{\la^q}^{u_q}a_{\la^{q+1}}^{u_{q+1}}\cdots a_{\la^{p+q-1}}^{u_{p+q-1}}
\end{equation}
summed over all reduced factorizations $u_1\cdots u_{p+q-1} = w$
compatible with $\fraka$, $\frakb$.
\end{thm}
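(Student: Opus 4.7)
My plan is to derive the formula directly from Proposition \ref{split2prop} by expanding each factor using the positivity identities (\ref{Geq}) and (\ref{CmStan}) and then collecting coefficients. The starting point is the identity
\[
\CS_w = \sum G_{u_1}(0/Z_q)\cdots G_{u_{q-1}}(0/Z_2)\, J_{u_q}(X\,;Y_1/Z_1)\, G_{u_{q+1}}(Y_2)\cdots G_{u_{p+q-1}}(Y_p)
\]
given by that proposition, where the sum runs over reduced factorizations $u_1 \cdots u_{p+q-1} = w$ compatible with $\fraka$ and $\frakb$.

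The key simplification comes from the hypothesis $b_1 = 0$: this forces the variable block $Z_1 = \{z_{b_0+1},\ldots,z_{b_1}\}$ to be empty, so the double mixed Stanley function $J_{u_q}(X\,;Y_1/Z_1)$ collapses to the single mixed Stanley function $J_{u_q}(X\,;Y_1)$. This collapse is essential because, as the paper itself notes just before the statement of the theorem, no analogue of (\ref{CmStan}) is currently known for the double mixed Stanley function.

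Next I would check that in every compatible factorization, $u_q$ is increasing up to $a_1$, so that (\ref{CmStan}) applies with $k = a_1$. Granting this, one obtains
\[
J_{u_q}(X\,;Y_1) = \sum_{\la^q} e^{u_q}_{\la^q}\, \Ti_{\la^q}(X\,;Y_1),
\]
the sum being restricted to $a_1$-strict partitions $\la^q$. The remaining factors expand by (\ref{Geq}): each $G_{u_j}(Y_{j-q+1})$ with $j > q$ becomes $\sum_{\la^j} a^{u_j}_{\la^j}\, s_{\la^j}(Y_{j-q+1})$. For the factors $G_{u_j}(0/Z_{q+1-j})$ with $j < q$, I would combine the identities $G_u(0/Z) = G_{u^{-1}}(-Z)$, the coefficient symmetry $a^{u^{-1}}_\la = a^u_{\la'}$, and the relation $s_\la(0/Z) = (-1)^{|\la|} s_{\la'}(Z) = s_{\la'}(-Z)$ to rewrite them as $\sum_{\la^j} a^{u_j}_{\la^j}\, s_{\la^j}(0/Z_{q+1-j})$. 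Substituting all three expansions into the formula from Proposition \ref{split2prop} and extracting the coefficient of each product $s_{\la^1}(0/Z_q)\cdots s_{\la^{q-1}}(0/Z_2)\Ti_{\la^q}(X\,;Y_1)s_{\la^{q+1}}(Y_2)\cdots s_{\la^{p+q-1}}(Y_p)$ yields precisely the integer $f^w_{\underline{\la}}$ defined in (\ref{dbfdef0}).

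The main obstacle I anticipate is the structural claim that $u_q$ is increasing up to $a_1$ in every compatible factorization. This should follow from careful bookkeeping: the factors $u_j \in S_\infty$ with $j>q$ fix $1,\ldots,a_{j-q}$, and the factors $u_i \in S_\infty$ with $i<q$ fix $1,\ldots,b_{q-i}$, so the ascent pattern $w(1)<\cdots<w(a_1)$ forced by compatibility of $w$ with $\fraka$ must be present in $u_q$ itself at those positions. Once this lemma is established, the rest of the argument is routine coefficient extraction.
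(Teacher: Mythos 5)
Your proof is correct and follows the same route as the paper, whose argument is precisely the one-line citation of Proposition \ref{split2prop} together with equations (\ref{Geq}) and (\ref{CmStan}); you have simply carried it out, and correctly isolated that the hypothesis $b_1=0$ makes $Z_1$ empty, so that $J_{u_q}(X\,;Y_1/Z_1)$ collapses to the single mixed Stanley function $J_{u_q}(X\,;Y_1)$. The structural lemma you flagged does hold: if $u_q$ had a descent at some $i<a_1$, then since $y:=u_{q+1}\cdots u_{p+q-1}\in S_\infty$ fixes $1,\ldots,a_1$ and hence commutes with $s_i$, one would have $\ell(ws_i)=\ell\bigl(u_1\cdots u_{q-1}\cdot(u_qs_i)\cdot y\bigr)\leq\ell(w)-1$, contradicting the fact that $w$, being compatible with $\fraka$, has no descent below $a_1$.
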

\begin{proof}
The result follows from Proposition \ref{split2prop} by using 
equations (\ref{Geq}) and (\ref{CmStan}).
\end{proof}

Proposition \ref{split2prop} and Theorem \ref{Csplitthm} are 
symplectic analogues of Proposition \ref{split1A} and Theorem
\ref{Asplitthm}, and similar remarks about their algebraic,
combinatorial, and geometric significance apply. We refer the reader
to \cite{T5}, \cite[\S 4 and \S 6]{T6}, and \S \ref{geometriz} of the
present paper for further details and for examples which illustrate
computations of the mixed Stanley coefficients $e^w_\la$.

\section{The type D theory}
\label{tDt}

For the orthogonal Lie types B and D we work with coefficients in the
ring $\Z[\frac{1}{2}]$. For $w\in W_\infty$, the type B double
Schubert polynomial $\BS_w$ of \cite{IMN1} is related to the type C
Schubert polynomial by the equation $\BS_w=2^{-s(w)}\CS_w$, where
$s(w)$ denotes the number of indices $i$ such that $w_i<0$. We will
therefore omit any further discussion of type B, and concentrate on
the even orthogonal case.  The exposition is parallel to that of \S
\ref{tCt}, but there are some interesting variations in the results
and in their proofs.

\subsection{Schubert polynomials and divided differences}
The Weyl group $\wt{W}_n$ for the root system $\text{D}_n$ is the
subgroup of $W_n$ consisting of all signed permutations with an even
number of sign changes.  The group $\wt{W}_n$ is an extension of $S_n$
by the element $s_\Box=s_0s_1s_0$, which acts on the right by
\[
(w_1,w_2,\ldots,w_n)s_\Box=(\ov{w}_2,\ov{w}_1,w_3,\ldots,w_n).
\]
There is a natural embedding $\wt{W}_n\hookrightarrow \wt{W}_{n+1}$ of
Weyl groups defined by adjoining the fixed point $n+1$, and we let
$\wt{W}_\infty := \cup_n \wt{W}_n$. The elements of the set $\N_\Box
:=\{\Box,1,\ldots\}$ index the simple reflections in $\wt{W}_\infty$;
these are used to define the reduced words and descents of elements in
$\wt{W}_\infty$ as in the previous sections.

The nilCoxeter algebra $\wt{\cW}_n$
of $\wt{W}_n$ is the free associative algebra with unit generated by
the elements $u_\Box,u_1,\ldots,u_{n-1}$ modulo the relations
\[
\begin{array}{rclr}
u_i^2 & = & 0 & i\in \N_\Box\ ; \\
u_\Box u_1 & = & u_1 u_\Box \\
u_\Box u_2 u_\Box & = & u_2 u_\Box u_2 \\
u_iu_{i+1}u_i & = & u_{i+1}u_iu_{i+1} & i>0\ ; \\
u_iu_j & = & u_ju_i & j> i+1, \ \text{and} \ (i,j) \neq (\Box,2).
\end{array}
\]

As in \S \ref{spsdds}, for any $w\in \wt{W}_n$, choose a reduced word
$a_1\cdots a_\ell$ for $w$, and define $u_w := u_{a_1}\ldots u_{a_\ell}$.
Denote the coefficient of $u_w\in \wt{\cW}_n$ in the expansion of
the element $\xi\in \wt{\cW}_n$ in the $u_w$ basis by $\langle
\xi,w\rangle$.  Let $t$ be a variable and, following Lam \cite{La},
define
\[
D(t) := (1+t u_{n-1})\cdots (1+t u_2)(1+t u_1)(1+t u_\Box)
(1+t u_2)\cdots (1+t u_{n-1}).
\]
Let $D(X):=D(x_1)D(x_2)\cdots$, and for $w\in \wt{W}_n$, define
\begin{equation}
\label{dbleD}
\DS_w(X\,;Y,Z) := \left\langle 
\tilde{A}_{n-1}(z_{n-1})\cdots \tilde{A}_1(z_1) D(X) A_1(y_1)\cdots 
A_{n-1}(y_{n-1}), w\right\rangle.
\end{equation}
The power series $\DS_w(X\,;Y):=\DS_w(X\,;Y,0)$ are the type D
Billey-Haiman Schubert polynomials, and the $\DS_w(X\,;Y,Z)$ are their
double versions from \cite{IMN1}.

The double Schubert polynomial $\DS_w(X\,;Y,Z)$ is stable under the
natural inclusions $\wt{W}_n\hookrightarrow \wt{W}_{n+1}$, and hence
is well defined for $w\in \wt{W}_\infty$.  We set
$$E_w(X):=\DS_w(X\,;0,0) = \left\langle D(X), w\right\rangle$$ and
call $E_w$ the {\em type D Stanley symmetric function} indexed by
$w\in \wt{W}_n$. Observe that we have $E_w=E_{w^{-1}}$. Equation
(\ref{dbleD}) implies the relation
\begin{equation}
\label{dbleD2}
\DS_w(X\,;Y,Z) = \sum_{uv\om=w}\AS_{u^{-1}}(-Z)E_v(X)\AS_{\om}(Y)
\end{equation}
summed over all reduced factorizations $uv\om=w$ with $u,\om\in S_\infty$.

For each strict partition $\la$, the Schur $P$-function $P_\la(X)$ is
defined by the equation $P_\la(X):= 2^{-\ell(\la)}Q_\la(X)$, where
$\ell(\la)$ denotes the length of $\la$.  The type D Stanley symmetric
functions $E_w(X)$ lie in the ring $\Gamma':=\Z[P_1,P_2,\ldots]$ of
Schur $P$-functions. In fact, for any $w\in \wt{W}_\infty$, 
we have an equation
\begin{equation}
\label{DStan0}
E_w(X) = \sum_{\la\, :\, |\la| = \ell(w)} d^w_{\la}\,P_{\la}(X)
\end{equation}
summed over all strict partitions $\la$ with $|\la|=\ell(w)$. Since
$D(t)D(-t)=1$, it follows from \cite[Thm.\ 2.11]{P} that an identity
(\ref{DStan0}) exists with coefficients $d^w_\la\in \Z$. Given
equation (\ref{dbleD2}), this implies that $\DS_w(X\,;Y,Z)$ is an
element of $\Gamma'[Y,Z]$, for any $w\in \wt{W}_\infty$. For three
different proofs that $d^w_{\la}\geq 0$, see \cite{B, BH, La}.

We define an action of $\wt{W}_\infty$ on $\Gamma'[Y,Z]$ by ring
automorphisms as follows. The simple reflections $s_i$ for $i>0$ act
by interchanging $y_i$ and $y_{i+1}$ and leaving all the remaining
variables fixed, as in \S \ref{spsdd}. The reflection $s_\Box$ maps 
$(y_1,y_2)$ to $(-y_2,-y_1)$, fixes the $y_j$ for $j\geq 3$ and all 
the $z_j$, and satisfies, for any $r\geq 1$,
\begin{align*}
\label{s_Box}
s_\Box(P_r(X)) &:= P_r(y_1,y_2,x_1,x_2,\ldots)  \\
&= P_r(X)+(y_1+y_2)\sum_{j=0}^{r-1}\left(\sum_{a+b=j}y_1^ay_2^b\right)
Q_{r-1-j}(X).
\end{align*}
For each $i\in \N_\Box$, define the divided difference operator
$\partial_i^y$ on $\Gamma'[Y,Z]$ by
\[
\partial_\Box^yf := \frac{f-s_\Box f}{-y_1-y_2}, \qquad
\partial_i^yf := \frac{f-s_if}{y_i-y_{i+1}} \ \ \ \text{for $i>0$}.
\]
Consider the ring involution $\omega:\Gamma'[Y,Z]\to\Gamma'[Y,Z]$
determined by
\[
\omega(y_j) = -z_j, \qquad
\omega(z_j) = -y_j, \qquad
\omega(P_r(X))=P_r(X)
\]
and set $\partial_i^z:=\omega\partial_i^y\omega$ for each $i\in \N_\Box$.

The polynomials $\DS_w(X\,;Y,Z)$ for $w\in \wt{W}_{\infty}$ are the unique
family of elements of $\Gamma'[Y,Z]$ satisfying the equations
\begin{equation}
\label{Dddeq}
\partial_i^y\DS_w = \begin{cases}
\DS_{ws_i} & \text{if $\ell(ws_i)<\ell(w)$}, \\ 
0 & \text{otherwise},
\end{cases}
\quad
\partial_i^z\DS_w = \begin{cases}
\DS_{s_iw} & \text{if $\ell(s_iw)<\ell(w)$}, \\ 
0 & \text{otherwise},
\end{cases}
\end{equation}
for all $i\in \N_\Box$, together with the condition that the constant
term of $\DS_w$ is $1$ if $w=1$, and $0$ otherwise. As in \S
\ref{spsdd} and \S \ref{spsdds}, it follows that descents of $w$ and
$w^{-1}$ determine the symmetries of the double Schubert polynomial
$\DS_w(X\,;Y,Z)$, and that the polynomials $\DS_w$ represent
degeneracy loci of even orthogonal vector bundles, in the sense of
\cite{Fu2}.

\subsection{Schur $P$-functions and their double analogues}
\label{spfan}

Let $n\geq 1$ be an integer and $\ell\in [1,n]$. Let 
$\al=(\al_1\ldots,\al_\ell)$ be a composition, and define 
a polynomial $P^{(\ell)}_\al(x_1,\ldots,x_n)$ by the equation
\begin{equation}
\label{Peq1}
P^{(\ell)}_\al(x_1,\ldots,x_n) := \frac{1}{(n-\ell)!}\sum_{\om\in S_n}
\om\left(x_1^{\al_1}\cdots x_\ell^{\al_\ell}\prod_{i\leq \ell,\, i<j\leq n}
\frac{x_i+x_j}{x_i-x_j}\right).
\end{equation}
Let $S_{n-\ell}$ denote the subgroup of 
$S_n$ consisting of permutations of $\{\ell+1,\ldots,n\}$. Since the
expression $\dis x^\al\prod_{i\leq \ell, i<j\leq n}\frac{x_i+x_j}{x_i-x_j}$ 
is symmetric in $(x_{\ell+1},\ldots,x_n)$, we deduce that
\begin{equation}
\label{Peqn}
P^{(\ell)}_\al(x_1,\ldots,x_n) = 
\sum_{\sigma\in S_n/S_{ n-\ell}} \sigma\left(x_1^{\al_1}\cdots x_\ell^{\al_\ell}
\prod_{i\leq \ell, i<j\leq n}\frac{x_i+x_j}{x_i-x_j}\right).
\end{equation}
It follows from \cite[Prop.\ 1.1(c)]{Iv1} that
$P^{(\ell)}_\al(x_1,\ldots,x_n)=0$ if $\al_i=\al_j$ for some $i\neq j$.
Hence, the polynomial $P^{(\ell)}_\al(x_1,\ldots,x_n)$ is alternating in
the indices $(\al_1,\ldots, \al_\ell)$.

\begin{lemma}
\label{newlemma}
Assume that $n$ is even. If $\al_\ell=0$, then we have
\[
P^{(\ell)}_\al(x_1,\ldots,x_n)=\begin{cases}
0 & \text{if $\ell$ is odd}, \\ P_\al^{(\ell-1)}(x_1,\ldots,x_n) &
\text{if $\ell$ is even}.
\end{cases}
\]
\end{lemma}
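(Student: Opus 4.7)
The plan is to isolate the $x_\ell$-dependence in the defining formula (\ref{Peq1}) when $\al_\ell=0$ and average it out using the symmetry in the ``tail'' variables.

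First, since $\al_\ell=0$, the integrand of (\ref{Peq1}) factors as $fg$, where
\[
f := x_1^{\al_1}\cdots x_{\ell-1}^{\al_{\ell-1}}\prod_{i\leq \ell-1,\,i<j\leq n}\frac{x_i+x_j}{x_i-x_j},\qquad g := \prod_{j=\ell+1}^n\frac{x_\ell+x_j}{x_\ell-x_j}.
\]
A direct inspection shows that $f$ is symmetric in $(x_\ell,x_{\ell+1},\ldots,x_n)$, since the monomial part contributes no exponent in these variables and each $j\in\{\ell,\ldots,n\}$ enters the remaining product symmetrically. Set $m:=n-\ell+1$, and let $S_m\subset S_n$ denote the subgroup permuting $\{\ell,\ldots,n\}$. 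Writing $\om=\sigma\tau$ with $\sigma$ running over coset representatives for $S_n/S_m$ and $\tau\in S_m$, the $S_m$-invariance of $f$ gives
\[
\sum_{\om\in S_n}\om(fg) = \sum_{\sigma\in S_n/S_m}\sigma\!\left(f\cdot\sum_{\tau\in S_m}\tau(g)\right).
\]

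Second, I would evaluate the inner sum. Only the value $k:=\tau(\ell)\in\{\ell,\ldots,n\}$ matters, and there are $(m-1)!$ permutations $\tau\in S_m$ for each such $k$, each contributing $\prod_{j\neq k,\,\ell\leq j\leq n}\frac{x_k+x_j}{x_k-x_j}$. Relabeling $y_i:=x_{\ell+i-1}$ yields
\[
\sum_{\tau\in S_m}\tau(g) = (m-1)!\,F_m(y_1,\ldots,y_m),\qquad F_m:=\sum_{k=1}^m\prod_{j\neq k}\frac{y_k+y_j}{y_k-y_j}.
\]

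Third, I would establish the classical identity
\[
F_m = \begin{cases} 1 & m \text{ odd},\\ 0 & m \text{ even}.\end{cases}
\]
The function $F_m$ is symmetric in $(y_1,\ldots,y_m)$ and homogeneous of degree $0$. A short residue computation shows that at the generic point of $\{y_a=y_b\}$, the apparent poles in the $k=a$ and $k=b$ summands cancel, so $F_m$ is regular in codimension one, hence a symmetric polynomial, and therefore a constant. Sending $y_1\to\infty$ (so the $k=1$ term tends to $1$ and each $k\neq 1$ term acquires a factor $-1$) produces the recursion $F_m=1-F_{m-1}$ with base case $F_1=1$, which determines $F_m$ as claimed.

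Finally, since $n$ is even, $m$ has parity opposite to $\ell$. If $\ell$ is odd then $m$ is even, $F_m=0$, and $P^{(\ell)}_\al$ vanishes. If $\ell$ is even then $m$ is odd, $F_m=1$, and using $(m-1)!=(n-\ell)!$ together with the $S_m$-invariance of $f$ to convert the coset sum into $\tfrac{1}{m!}$ times a full $S_n$-sum,
\[
P^{(\ell)}_\al = \frac{(m-1)!}{(n-\ell)!}\sum_{\sigma\in S_n/S_m}\sigma(f) = \frac{1}{m!}\sum_{\om\in S_n}\om(f) = P^{(\ell-1)}_\al(x_1,\ldots,x_n),
\]
where the final equality is (\ref{Peq1}) with $\ell$ replaced by $\ell-1$, since $m!=(n-\ell+1)!$. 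The main obstacle is the identity for $F_m$; it is precisely what forces the parity of $n$ to enter the statement, and the rest of the argument is bookkeeping.
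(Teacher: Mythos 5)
Your argument is correct and follows the same route as the paper: both decompose the sum in (\ref{Peq1}) over cosets of the subgroup $H\cong S_{n+1-\ell}$ of permutations of $\{\ell,\ldots,n\}$, observe that everything except the factor $g=\prod_{j>\ell}\tfrac{x_\ell+x_j}{x_\ell-x_j}$ is $H$-invariant, and then evaluate the inner $H$-sum of $g$ to a constant whose parity dependence (via $m=n+1-\ell$, which has parity opposite to $\ell$ because $n$ is even) yields the dichotomy. The one real difference is that the paper simply cites Ivanov's \cite[Prop.\ 2.4]{Iv2} for the identity $\sum_{\om\in S_m}\om\bigl(\prod_{j\geq 2}\tfrac{y_1+y_j}{y_1-y_j}\bigr)=(m-1)!$ or $0$, whereas you re-derive it from scratch: your observation that $F_m$ is a degree-zero homogeneous symmetric rational function with no codimension-one poles (so a constant), together with the $y_1\to\infty$ limit giving $F_m=1-F_{m-1}$, is a clean independent proof. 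That makes your version self-contained at the cost of a short extra argument; the bookkeeping with $(m-1)!=(n-\ell)!$ and $m!=(n-\ell+1)!$ in the final step is handled correctly.
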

\begin{proof}
According to \cite[Prop.\ 2.4]{Iv2}, for any $m\geq 1$, we have 
\begin{equation}
\label{ivlem}
\sum_{\om\in S_m} \om\left(\prod_{j=2}^m\frac{x_1+x_j}{x_1-x_j}\right) =
\begin{cases} 0 & \text{if $m$ is even}, \\ (m-1)! & \text{if $m$ is odd}.
\end{cases}
\end{equation}

Let $H\cong S_{n+1-\ell}$ denote the subgroup of $S_n$ consisting of
  permutations of $\{\ell,\ldots,n\}$, and set $P_\al^{(\ell,n)}:=
P^{(\ell)}_\al(x_1,\ldots,x_n)$. Using (\ref{Peqn}) and equation 
(\ref{ivlem}), we compute that
\begin{align*}
(n-\ell)!P_\al^{(\ell,n)} &= \sum_{\sigma\in S_n/H}\sum_{\om\in H}
  \sigma \om \left(x^\al\prod_{i\leq \ell, i<j\leq
    n}\frac{x_i+x_j}{x_i-x_j}\right) \\ &= \sum_{\sigma\in S_n/H}
  \sum_{\om\in H} \sigma \left(x^\al\prod_{i< \ell, i<j\leq
    n}\frac{x_i+x_{\om(j)}}{x_i-x_{\om(j)}}\right)
  \prod_{j=\ell+1}^n\frac{x_{\sigma \om(\ell)}+x_{\sigma
      \om(j)}}{x_{\sigma \om(\ell)}-x_{\sigma \om(j)}} \\ &=
  \sum_{\sigma\in S_n/H} \sigma \left(x^\al\prod_{i< \ell, i<j\leq
    n}\frac{x_i+x_j}{x_i-x_j}\right) \cdot \sum_{\om\in H}
  \prod_{j=\ell+1}^n\frac{x_{\sigma \om(\ell)}+x_{\sigma
      \om(j)}}{x_{\sigma \om(\ell)}-x_{\sigma \om(j)}}
  \\ &= \begin{cases} 0 & \text{if $\ell$ is odd}, \\ (n-\ell)!\,
    P^{(\ell-1,n)}_\al & \text{if $\ell$ is even}.
\end{cases}
\end{align*}
\end{proof}

Let $t=(t_1,t_2\ldots)$ be a sequence of independent variables, as in
\S \ref{fsp}, and define $(x\,|\,t)^r:=(x-t_1)\cdots(x-t_r)$.  Given a
strict partition $\la$ of length $\ell$ and $n \geq \ell$, Ivanov's
double Schur $P$-function $P_\la(x_1,\ldots,x_n\, |\, t)$ is defined
by
\begin{equation}
\label{Peq2}
P_\la(x_1,\ldots,x_n\, |\, t):=\frac{1}{(n-\ell)!}\sum_{\om\in S_n}
\om\left(\prod_{i=1}^\ell(x_i\,|\, t)^{\la_i}\prod_{i\leq \ell,\, i<j\leq n}
\frac{x_i+x_j}{x_i-x_j}\right).
\end{equation}
Following \cite[\S 4.2]{IMN1}, we let $P_\la(X\,|\, t)$ denote the
(even) projective limit of the functions $P_\la(x_1,\ldots,x_{2m}\, |\,
t)$ as $m\to\infty$. We have that $P_\la(X\,|\, 0)=P_\la(X)$ is the
Schur $P$-function indexed by the partition $\la$.

\begin{prop}
\label{Pxtprop}
Let $\la$ be a strict partition of length $\ell$.

\medskip
\noin
{\em (a)} Suppose that $\ell$ is even and 
$\la=\delta_{\ell-1}+\mu$ for some partition $\mu$. Then 
\[
P_\la(X\, |\, t) = \sum_{\nu\subset \mu} P_{\delta_{\ell-1}+\nu}(X)S^\la_{\mu/\nu}(e(-t)).
\]

\medskip
\noin
{\em (b)} Suppose that $\ell$ is odd and $\la = 
\delta_{\ell}+\mu$ for some partition $\mu$. Then 
\[
P_\la(X\, |\, t) = \sum_{\nu\subset \mu} P_{\delta_{\ell}+\nu}(X)S^\la_{\mu/\nu}(e(-t)).
\]
\end{prop}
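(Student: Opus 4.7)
The plan is to expand Ivanov's double Schur $P$-function directly from its definition (\ref{Peq2}) and identify the resulting sum with the stated flagged Schur expansion. Starting from the factorization $(x_i \, | \, t)^{\la_i} = \sum_{j_i=0}^{\la_i} e_{j_i}(-t_1,\ldots,-t_{\la_i})\,x_i^{\la_i-j_i}$ and substituting this into (\ref{Peq2}), then writing $\gamma_i := \la_i - j_i$, I obtain
\[
P_\la(X\,|\,t) = \sum_{0\leq \gamma_i \leq \la_i} \Bigl(\prod_{i=1}^\ell e_{\la_i-\gamma_i}(-t_1,\ldots,-t_{\la_i})\Bigr)\, P^{(\ell)}_\gamma(X),
\]
where $P^{(\ell)}_\gamma(X)$ denotes the appropriate (even) stable limit of $P^{(\ell)}_\gamma(x_1,\ldots,x_n)$.

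Next I use the alternating property of $P^{(\ell)}_\gamma$ in the indices of $\gamma$ (a consequence of \cite[Prop.\ 1.1(c)]{Iv1}) together with Lemma \ref{newlemma}. In case (b), $\ell$ is odd, so Lemma \ref{newlemma} forces $P^{(\ell)}_\gamma = 0$ whenever any $\gamma_i = 0$, and the alternating property forces it to vanish whenever two $\gamma_i$ coincide; hence only compositions $\gamma$ whose entries form a strict partition $\rho$ of length exactly $\ell$ contribute, each giving $P^{(\ell)}_\gamma(X) = \operatorname{sgn}(\sigma)\,P_\rho(X)$ for the unique $\sigma\in S_\ell$ with $\gamma_i = \rho_{\sigma(i)}$. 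Collecting over $\sigma$ produces the antisymmetrization, which assembles into the determinant
\[
\sum_{\sigma\in S_\ell}\operatorname{sgn}(\sigma)\prod_{i=1}^\ell e_{\la_i-\rho_{\sigma(i)}}(-t_1,\ldots,-t_{\la_i}) = \det\bigl(e_{\la_i-\rho_j}(-t_1,\ldots,-t_{\la_i})\bigr)_{1\leq i,j\leq \ell}.
\]
In case (a), $\ell$ is even, so Lemma \ref{newlemma} provides an additional family of contributions: if the sorted form of $\gamma$ is $(\rho_1,\ldots,\rho_{\ell-1},0)$ for a strict partition $\rho$ of length $\ell-1$, then $P^{(\ell)}_\gamma(X) = \operatorname{sgn}(\sigma)\,P^{(\ell-1)}_{\rho}(X) = \operatorname{sgn}(\sigma)\,P_\rho(X)$. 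Both subcases of (a) merge uniformly by allowing the padding convention $\tilde\rho = (\rho,0)$ when $\ell(\rho)=\ell-1$.

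To finish, I reindex using $\rho = \delta_{\ell-1}+\nu$ in case (a) (with $\delta_{\ell-1}$ extended by a trailing $0$) and $\rho = \delta_\ell + \nu$ in case (b). Strictness of $\rho$ is then equivalent to $\nu$ being a (weakly decreasing) partition, and the constraint $\la_i - \rho_j \geq 0$ forced by the elementary symmetric polynomials collapses, by the usual rearrangement argument in a decreasing sequence, to $\rho_j \leq \la_j$, hence to $\nu \subset \mu$. A direct computation gives $\la_i - \rho_j = \mu_i - \nu_j + j - i$ in both cases, so the determinant rewrites as
\[
\det\bigl(e_{\mu_i-\nu_j+j-i}(-t_1,\ldots,-t_{\la_i})\bigr)_{1\leq i,j\leq \ell} = S^\la_{\mu/\nu}(e(-t)),
\]
by the definition of the flagged Schur polynomial in \S\ref{fsp}. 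Combining these steps yields both claimed identities.

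The main technical point, and the place where I expect to be most careful, is the bookkeeping in case (a): one has to check that every composition $\gamma$ which can be alternated into the shape $(\rho_1,\ldots,\rho_{\ell-1},0)$ is correctly accounted for by a single term $P_{\delta_{\ell-1}+\nu}(X)$ with $\nu_\ell = 0$, and that no double counting occurs between the length-$\ell$ and length-$(\ell-1)$ strict partitions. This is handled by observing that the $\delta_{\ell-1}$ shift (extended by $0$) makes the map $\nu \mapsto \delta_{\ell-1}+\nu$ a bijection onto the set of weakly decreasing $\ell$-tuples with consecutive gaps $\geq 1$ except possibly at the end, which matches exactly the union of the two relevant families of strict partitions.
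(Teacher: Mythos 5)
Your proposal is correct and follows essentially the same route as the paper: expand $P_\la(X\,|\,t)$ via $(x\,|\,t)^r=\sum_p x^p e^r_{r-p}(-t)$, pass to $\sum_\gamma P^{(\ell)}_\gamma(X)\,e^\la_{\la-\gamma}(-t)$, then use the alternating property of $P^{(\ell)}_\gamma$ together with Lemma \ref{newlemma} and the reindexing $\rho=\delta+\nu$ to assemble the determinant $S^\la_{\mu/\nu}(e(-t))$. The paper compresses the antisymmetrization step into a single displayed formula, whereas you spell out the bookkeeping (strict partitions of length $\ell$ versus $\ell-1$ in case (a), the sign of the sorting permutation, and the vanishing that restricts to $\nu\subset\mu$), but the underlying argument is identical.
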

\begin{proof}
For any $r\geq 1$, we have $(x\,|\,t)^r=\sum_{p=0}^rx^pe^r_{r-p}(-t)$.
Therefore for any partition $\la$ of length $\ell$, we have
\begin{equation}
\label{Eeqn}
\prod_{i=1}^\ell(x_i\,|\, t)^{\la_i} = 
\sum_{\al\geq 0}x^{\al}e_{\la-\al}^{\la}(-t).
\end{equation}
It follows from (\ref{Peq1}), (\ref{Peq2}), and (\ref{Eeqn}) that 
for any $n\geq\ell$, we have
\[
P_\la(x_1,\ldots,x_n\, |\, t) =\sum_{0\leq \al\leq \la}
P_\al^{(\ell)}(x_1,\ldots,x_n)e_{\la-\al}^\la(-t).
\]
Taking the even projective limit as $n\to\infty$ gives
\begin{equation}
\label{epleq}
P_\la(X\, |\, t) =\sum_{0\leq \al\leq \la}P_\al^{(\ell)}(X)e_{\la-\al}^\la(-t).
\end{equation}

For part (a), using (\ref{epleq}), Lemma \ref{newlemma}, and the 
alternating property of the functions $P_\al^{(\ell)}(X)$ gives
\begin{gather*}
P_\la(X\, |\, t) = \sum_{\nu\subset \mu} \sum_{\s\in S_{\ell}}(-1)^\s
P_{\delta_{\ell-1}+\nu}(X)e^{\la}_{\delta_{\ell-1}+\mu-\s(\delta_{\ell-1}+\nu)}(-t) \\
= \sum_{\nu\subset \mu}P_{\delta_{\ell-1}+\nu}(X)S^\la_{\mu/\nu}(e(-t)).
\end{gather*}
For part (b), we similarly obtain 
\begin{gather*}
P_\la(X\, |\, t) = \sum_{\nu\subset \mu} \sum_{\s\in S_{\ell}}(-1)^\s
P_{\delta_{\ell}+\nu}(X)e^{\la}_{\delta_\ell+\mu-\s(\delta_{\ell}+\nu)}(-t) \\
= \sum_{\nu\subset \mu}P_{\delta_{\ell}+\nu}(X)S^\la_{\mu/\nu}(e(-t)).
\end{gather*}
\end{proof}

\begin{cor}
\label{Pcor}
\medskip
\noin
{\em (a)} If $n$ is odd, then 
\[
P_{2\delta_{n-1}}(X\, |\, t) = 
\sum_{\nu\subset \delta^*_n} P_{\delta_{n-2}+\nu}(X)S^{2\delta_{n-1}}_{\delta^*_n/\nu}(e(-t)).
\]

\medskip
\noin
{\em (b)} If $n$ is even, then 
\[
P_{2\delta_{n-1}}(X\, |\, t) = 
\sum_{\nu\subset \delta_{n-1}} P_{\delta_{n-1}+\nu}(X)S^{2\delta_{n-1}}_{\delta_{n-1}/\nu}(e(-t)).
\]
\end{cor}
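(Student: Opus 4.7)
The plan is to apply Proposition \ref{Pxtprop} directly to the strict partition $\la = 2\delta_{n-1} = (2n-2, 2n-4, \ldots, 2)$, whose length is $\ell = n-1$. Since the parity of $\ell$ is opposite to that of $n$, the case $n$ odd of the corollary falls under Proposition \ref{Pxtprop}(a) (where $\ell$ is even), while the case $n$ even falls under Proposition \ref{Pxtprop}(b) (where $\ell$ is odd). So the entire content of the corollary is the identification of the appropriate offset partition $\mu$ in each case.

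For part (a), with $n$ odd and $\ell = n-1$ even, I would write $2\delta_{n-1} = \delta_{n-2} + \mu$, where $\delta_{n-2} = (n-2, n-3, \ldots, 1)$ is regarded as a sequence of length $n-1$ by appending a trailing zero. The $i$-th component of $\mu$ is then
\[
2(n-i) - (n-1-i) = n - i + 1 \qquad \text{for } 1 \le i \le n-1,
\]
so $\mu = (n, n-1, \ldots, 2) = \delta^*_n$. This is a partition, and substituting it into Proposition \ref{Pxtprop}(a) yields exactly the stated formula.

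For part (b), with $n$ even and $\ell = n-1$ odd, I would write $2\delta_{n-1} = \delta_{n-1} + \mu$, which immediately gives $\mu = \delta_{n-1} = (n-1, n-2, \ldots, 1)$. Substituting into Proposition \ref{Pxtprop}(b) yields the claim.

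There is no real obstacle here: once Proposition \ref{Pxtprop} is in hand, the corollary reduces to the two elementary bookkeeping computations of $\mu$ above. All of the substantive work (the parity-dependent reduction via Lemma \ref{newlemma} and the expansion of $(x\,|\,t)^r$ into powers of $x$) has already been carried out in the proof of Proposition \ref{Pxtprop}.
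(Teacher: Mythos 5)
Your proposal is correct and is exactly the derivation implicit in the paper, which states the corollary without written proof as an immediate consequence of Proposition \ref{Pxtprop}. The computation of $\mu$ in each parity case ($\mu=\delta^*_n$ when $n$ is odd, $\mu=\delta_{n-1}$ when $n$ is even) is right, and nothing further is needed.
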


According to \cite[\S 9]{Iv3} and \cite[\S 8.3]{IN}, we have a Pfaffian
formula 
\begin{equation}
\label{PPfaff}
P_\la(X\, |\,t) = \Pf(P_{\la_i,\la_j}(X\,|\, t))_{1\leq i< j \leq 2\ell'},
\end{equation}
where $2\ell'$ is the least even integer which is greater than or
equal to $\ell(\la)$. In equation (\ref{PPfaff}), we use the
conventions that $P_{a,b}(X\,|\,t) := -P_{b,a}(X\,|\,t)$ whenever
$0\leq a\leq b$, and $P_{a,0}(X\,|\, t):=P_a(X\,|\, t)$. We will
require a raising operator expression analogous to (\ref{Qdef}) for
the functions $P_\la(X\,|\, t)$.  This uses a more involved Pfaffian
formalism which stems from the work of Knuth \cite[\S 4]{Kn} and
Kazarian \cite[App.\ C and D]{Ka}.

For any $r\in \Z$, we define the polynomial
$\cc^r_p=\cc^r_p(X\,|\, t)$ by
\[
\cc^r_p:=\sum_{j=0}^p q_{p-j}(X)e_j^r(-t).
\]
For any integer sequences $\al$, $\be$, let 
\[
\wh{\cc}_\al^\be:= \wh{\cc}_{\al_1}^{\be_1}
\wh{\cc}_{\al_2}^{\be_2}\cdots
\]
where, for each $i\geq 1$, 
\[
\wh{\cc}_{\al_i}^{\be_i}:= \cc_{\al_i}^{\be_i} + 
\begin{cases}
(-1)^ie^{\al_i}_{\al_i}(-t) & 
\text{if $\be_i = \al_i > 0$}, \\
0 & \text{otherwise}.
\end{cases}
\]

If $R:=\prod_{i<j} R_{ij}^{n_{ij}}$ is any
raising operator, denote by $\supp(R)$ the set of all
indices $i$ and $j$ such that $n_{ij}>0$. Let $\al=(\al_1,\ldots,\al_\ell)$ 
and $\be=(\be_1,\ldots,\be_\ell)$
be integer vectors, set $\nu:=R\al$, and define
\[
R \star \wh{\cc}^\be_{\al} = \ov{\cc}^\be_{\nu} :=
\ov{\cc}_{\nu_1}^{\be_1}\cdots\ov{\cc}^{\be_\ell}_{\nu_\ell}
\]
where for each $i\geq 1$, 
\[
\ov{\cc}_{\nu_i}^{\be_i}:= 
\begin{cases}
\cc_{\nu_i}^{\be_i} & \text{if $i\in\supp(R)$}, \\
\wh{\cc}_{\nu_i}^{\be_i} & \text{otherwise}.
\end{cases}
\]

\begin{prop}
\label{Pprop}
For any strict partition $\la$, we have
\begin{equation}
\label{Pkk}
P_\la(X\, |\,t) = 2^{-\ell(\la)}\, \RR \star \wh{\cc}^{\la}_{\la}.
\end{equation}
\end{prop}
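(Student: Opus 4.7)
The plan is to prove (\ref{Pkk}) by reducing it, via the Pfaffian formalism for raising operator expressions (Knuth \cite[\S 4]{Kn}, Kazarian \cite[App.\ C, D]{Ka}), to its rank-$1$ and rank-$2$ base cases, which are then verified directly using Proposition \ref{Pxtprop}. First I would pad $\la$ with a trailing zero when $\ell(\la)$ is odd so that the padded length is the even integer $2\ell'$, and establish the Pfaffian expansion
\[
\RR \star \wh{\cc}^{\la}_\la = \Pf\!\left(\tfrac{1-R_{ij}}{1+R_{ij}} \star \wh{\cc}^{(\la_i,\la_j)}_{(\la_i,\la_j)}\right)_{1\leq i<j \leq 2\ell'}.
\]
The $\star$-rule is designed precisely so that, in each term of the Pfaffian, only the two paired indices $(i,j)$ lie in the support of the effective raising operator and hence use the unhatted $\cc$, while all other positions keep the hat $\wh{\cc}$; the position-dependent sign $(-1)^i$ in the hat correction is calibrated to absorb into the Pfaffian sign convention, allowing the multi-index raising operator $\RR$ to reduce to a product of rank-$2$ pieces.

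The rank-$1$ base case is immediate from Proposition \ref{Pxtprop}(b) with $\ell = 1$: expanding the one-row factorial Schur polynomial gives
\[
2P_a(X\,|\,t) = \sum_{j=1}^{a} q_j(X)\,e^a_{a-j}(-t) = \cc^a_a - e^a_a(-t) = \wh{\cc}^a_a,
\]
which is (\ref{Pkk}) for $\la = (a)$ since $\RR$ is the empty product. For the rank-$2$ case with $a>b>0$, one expands
\[
\tfrac{1-R_{12}}{1+R_{12}} \star \wh{\cc}^{(a,b)}_{(a,b)} = \wh{\cc}^a_a\,\wh{\cc}^b_b + 2\sum_{k\geq 1}(-1)^k\,\cc^a_{a+k}\,\cc^b_{b-k}
\]
and compares with $4P_{a,b}(X\,|\,t)$ extracted from Proposition \ref{Pxtprop}(a) with $\ell = 2$. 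The two expressions agree in $\Gamma[t]$: any apparent discrepancy takes the form $Q_\nu$ for a non-strict index $\nu$, which vanishes in $\Gamma$ because of the relations forced by $\log\prod_i(1+x_it)/(1-x_it) = 2\sum_{k\,\text{odd}}p_k t^k/k$, which make every $q_{2k}$ an explicit polynomial in $q_1, q_3,\ldots$. The degenerate case $b = 0$ (occurring in the padded Pfaffian when $\ell(\la)$ is odd) reduces back to the rank-$1$ identity, since $\cc^0_{-k} = 0$ for $k \geq 1$ and $\wh{\cc}^0_0 = 1$, yielding the padded rank-$2$ entry $2P_{a,0}(X\,|\,t)$.

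Assembling the two previous steps and applying the Pfaffian formula (\ref{PPfaff}), each full rank-$2$ pair in the Pfaffian expansion contributes a factor of $4$ and each padded pair contributes a factor of $2$; in both parities of $\ell(\la)$ these factors combine to $2^{\ell(\la)}$, so
\[
\RR\star\wh{\cc}^\la_\la = 2^{\ell(\la)}\Pf(P_{\la_i,\la_j}(X\,|\,t))_{1\leq i<j\leq 2\ell'} = 2^{\ell(\la)}\,P_\la(X\,|\,t),
\]
from which (\ref{Pkk}) follows after division by $2^{\ell(\la)}$. The principal technical challenge lies in the first step: rigorously matching the position-dependent sign $(-1)^i$ in the hat correction with the Pfaffian sign convention in the Knuth-Kazarian expansion. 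The $\star$-operation is designed to bridge this, but careful bookkeeping of signs across arbitrary lengths is needed; once the Pfaffian reduction is in place, the remaining steps are direct calculations in $\Gamma[t]$.
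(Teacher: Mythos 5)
The proposal follows the same skeleton as the paper's proof: reduce via the Knuth--Kazarian Pfaffian formalism to small base cases, verify the base cases directly (here via Proposition \ref{Pxtprop}), and conclude using the Pfaffian formula (\ref{PPfaff}). Your rank-$1$ verification is correct and is a nice use of Proposition \ref{Pxtprop}(b). However, there is a genuine gap in the central reduction step.

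First, your stated reduction threshold disagrees with what the cited results provide. The paper's proof asserts, following \cite{Kn, Ka}, that the identity $\RR \star \wh{\cc}^{\la}_{\la} = 2^{\ell(\la)}\Pf(P_{\la_i,\la_j}(X\,|\, t))$ holds for all strict $\la$ if and only if it holds for strict partitions of length at most \emph{three}. You assert a reduction to lengths at most two without justification, and this is not the Knuth--Kazarian criterion. Second, and more importantly, your description of the expansion --- that in the Pfaffian-like sum ``only the two paired indices $(i,j)$ lie in the support of the effective raising operator'' --- misrepresents the $\star$-action. Expanding $\RR = \prod_{i<j}(1-R_{ij})/(1+R_{ij})$ produces raising monomials whose supports are arbitrary subsets of positions, not just single pairs of a matching; the equality of $\RR \star \wh{\cc}^\la_\la$ with a Pfaffian over rank-$2$ building blocks is precisely the non-trivial content being cited. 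The position-dependent sign $(-1)^i$ in the hat correction does not trivially restrict to $(-1)^1, (-1)^2$ on a subsequence $(\la_i,\la_j)$, and the renumbering of positions under Laplace expansion of the Pfaffian shifts those signs in a way that must be checked --- which is exactly why the criterion requires length $3$. You acknowledge this yourself (``the principal technical challenge''), but acknowledging the gap does not close it. Finally, your rank-$2$ verification that $\tfrac{1-R_{12}}{1+R_{12}}\star\wh{\cc}^{(a,b)}_{(a,b)} = 4P_{a,b}(X|t)$ is only asserted, not computed; the paper's remark that these base identities are ``straightforward to check from the definitions'' is at least accompanied by pointers to \cite[App.\ A]{AF2} and \cite[\S 2.3]{IMN2}, whereas your appeal to the vanishing of $Q_\nu$ for non-strict $\nu$ does not by itself account for the terms appearing in the comparison. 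In sum: same approach as the paper, but the key Pfaffian reduction is neither established nor correctly characterized.
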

\begin{proof}
It follows from \cite{Kn, Ka} that the equation 
\begin{equation}
\label{Pkk2}
\RR \star \wh{\cc}^{\la}_{\la} = 2^{\ell(\la)}\Pf(P_{\la_i,\la_j}(X\,|\, t))_{i<j}
\end{equation}
holds if and only if it holds for all strict partitions $\la$ of length 
$\ell$ at most 3. The latter is a formal identity which is straightforward 
to check from the definitions; compare with \cite[App.\ A]{AF2} and 
\cite[\S 2.3]{IMN2}. We conclude from (\ref{PPfaff}) and (\ref{Pkk2}) that 
(\ref{Pkk}) is also true.
\end{proof}

\subsection{The Schubert polynomial indexed by the longest element}
\label{sofD}

Let $\wt{w}_0$ denote the longest element in $\wt{W}_n$. We have
\[
\wt{w}_0=\left\{ \begin{array}{cl}
           (\ov{1},\ldots,\ov{n}) & \mathrm{ if } \ n \ \mathrm{is} \ 
             \mathrm{even}, \\
           (1,\ov{2},\ldots,\ov{n}) & \mathrm{ if } \ n \ \mathrm{is} \ 
             \mathrm{odd}.
             \end{array} \right.
\]
A formula for the top single Schubert polynomial
$\DS_{\wt{w}_0}(X\,;Y)$ was given by Billey and Haiman
\cite[Prop.\ 4.15]{BH}.  In this section, we derive the analogue of
their result for the double Schubert polynomial $\DS_{\wt{w}_0}(X\, ;
Y,Z)$, and use it to give a combinatorial proof of the Pfaffian
formula for $\DS_{\wt{w}_0}$ from \cite[Thm.\ 1.2]{IMN1}.

\begin{prop}
\label{Dtopfirst}
If $n$ is even, then we have 
\[
\DS_{\wt{w}_0}(X\, ;Y,Z) = 
\sum_{\la\subset\delta_{n-1}} P_{\delta_{n-1}+\la}(X) 
S^{(\delta_{n-1},\delta_{n-1})}_{\delta_{n-1}/\la}(e(Y,-Z))
\]
in $\Gamma'[Y,Z]$.
\end{prop}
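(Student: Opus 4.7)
The plan is to mirror the proof of Proposition \ref{Ctopfirst}, replacing Schur $Q$-functions by Schur $P$-functions and the staircase $\delta_n$ by $\delta_{n-1}$. The parity assumption on $n$ is essential: for $n$ even, $\wt{w}_0$ acts as $-I$ on $\C^n$, and therefore commutes with every element of $W_n$, in particular with $S_n$.

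First I would apply equation (\ref{dbleD2}) to $w = \wt{w}_0$ and use the centrality of $\wt{w}_0$, together with the definition (\ref{rdspeq}) of the reverse double Schubert polynomials, to obtain the exact type D analogue of (\ref{ffeq}):
\[
\DS_{\wt{w}_0}(X\,; Y, Z) = \sum_{\sigma \in S_n} E_{\wt{w}_0\sigma^{-1}}(X)\,\wt{\AS}_\sigma(Y, Z).
\]
The derivation is word-for-word parallel to the one given for (\ref{ffeq}): starting from a reduced factorization $uv\om = \wt{w}_0$ with $u,\om \in S_n$, one sets $\sigma := \om u$ and $v = \wt{w}_0 \sigma^{-1}$, and verifies that the length conditions match.

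Next I would invoke the type D counterpart of (\ref{FQeq}): for every $u \in S_n$,
\[
E_{\wt{w}_0 u}(X) = \sum_\la a_\la^{u^{-1}\om_0}\,P_{\delta_{n-1}+\la}(X),
\]
where $a_\la^{u^{-1}\om_0}$ is the type A Stanley coefficient from (\ref{Geq}). This identity should follow from the arguments in \cite[Thm.\ 3.16]{BH} and \cite[Thm.\ 3.15]{La}, which factor a reduced word for $\wt{w}_0 u$ through its descent at $\Box$ and reduce the remaining expansion to a purely type A computation. Substituting into the previous display and interchanging the two sums gives
\[
\DS_{\wt{w}_0}(X\,; Y, Z) = \sum_\la P_{\delta_{n-1}+\la}(X) \sum_{\sigma \in S_n} a_\la^{\sigma\om_0}\,\wt{\AS}_\sigma(Y, Z).
\]
It then suffices to apply identity (\ref{BHgen}), proved in the course of establishing Proposition \ref{Ctopfirst}, followed by the duality (\ref{Sdual}) (which itself rests on Lemma \ref{goodlem}):
\[
\sum_{\sigma \in S_n} a_\la^{\sigma\om_0}\,\wt{\AS}_\sigma(Y, Z)
 = S^{(\delta^\vee_{n-1},\delta^\vee_{n-1})}_{\delta_{n-1}/\la'}(h(Y,-Z))
 = S^{(\delta_{n-1},\delta_{n-1})}_{\delta_{n-1}/\la}(e(Y,-Z)).
\]
The final expression vanishes unless $\la \subset \delta_{n-1}$, so the outer sum collapses to the formula stated in the proposition.

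The main obstacle will be the second step, i.e.\ the type D analogue of (\ref{FQeq}). One must verify that the coefficients in the $P$-expansion of $E_{\wt{w}_0 u}$ are precisely the type A Stanley coefficients $a_\la^{u^{-1}\om_0}$, not some twisted or type D variant, and that the correct staircase indexing the leading $P$-function is $\delta_{n-1}$. A useful sanity check is the case $u=1$: since $G_{\om_0}(Y) = s_{\delta_{n-1}}(Y)$, only $\la = \delta_{n-1}$ contributes, yielding $E_{\wt{w}_0}(X) = P_{2\delta_{n-1}}(X)$, in agreement with \cite[Prop.\ 4.15]{BH} specialized to $Y=0$ and with the degree count $\ell(\wt{w}_0) = n(n-1) = |2\delta_{n-1}|$.
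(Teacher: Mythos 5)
Your proposal is correct and follows the same route as the paper's proof: use centrality of $\wt{w}_0$ (for $n$ even) to derive the type D analogue of (\ref{ffeq}), invoke the Billey--Haiman/Lam expansion $E_{\wt{w}_0 u}(X) = \sum_\la a_\la^{u^{-1}\om_0} P_{\delta_{n-1}+\la}(X)$, and then apply (\ref{BHgen}) and (\ref{Sdual}). The identity you flag as the "main obstacle" is exactly the paper's (\ref{FPeq}), taken from \cite[Thm.\ 3.16]{BH} together with \cite[Thm.\ 5.14]{La} (you cite Thm.\ 3.15 of \cite{La}, which is the wrong theorem number), so there is no gap.
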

\begin{proof}
Using equation (\ref{dbleD2}), we have
\[
\DS_{\wt{w}_0}(X\, ; Y,Z) = \sum_{uwv=\wt{w}_0} E_w(X)\AS_{u^{-1}}(-Z)\AS_v(Y).
\]
summed over all reduced factorizations $uwv=\wt{w}_0$ in $\wt{W}_\infty$
with $u,v \in S_\infty$. If $n$ is even, then every permutation in $S_n$
commutes with $\wt{w}_0$, and it follows that
\begin{equation}
\label{ffeqD}
\DS_{\wt{w}_0}(X\, ; Y,Z) = 
\sum_{\sigma\in S_n}E_{\wt{w}_0\sigma^{-1}}(X)\wt{\AS}_\sigma(Y,Z).
\end{equation}

According to \cite[Thm.\ 3.16]{BH} and \cite[Thm.\ 5.14]{La}, we have,
for every $u\in S_n$,
\begin{equation}
\label{FPeq}
E_{\wt{w}_0u}(X) = \sum_\la a_{\la}^{u^{-1}\om_0} P_{\delta_{n-1}+\la}(X).
\end{equation}
We deduce from (\ref{ffeqD}) and (\ref{FPeq}) that
\[
\DS_{\wt{w}_0}(X\, ; Y,Z) = \sum_\la P_{\delta_{n-1}+\la}(X) \sum_{\sigma\in
  S_n} a_{\la}^{\sigma\om_0} \wt{\AS}_\sigma(Y,Z).
\]
The result now follows by combining equations (\ref{BHgen}) and (\ref{Sdual}).
\end{proof}

Fix $k\geq 0$, and set ${}^kc_p(X\, ; Y):= \sum_{i=0}^p
q_{p-i}(X)h^{-k}_i(Y)$, so that we have
\[
{}^kc^r_p(X\, ; Y,Z) = \sum_{j=0}^p {}^kc_{p-j}(X\, ; Y) h_j^r(-Z).
\]
Define ${}^kb_r := {}^kc_r$ for $r<k$, ${}^kb_r
:= \frac{1}{2}{}^kc_r$ for $r>k$, and set
\[
{}^kb_k := \frac{1}{2}{}^kc_k + \frac{1}{2}e^k_k(Y) \quad \text{and} \quad
{}^k\wt{b}_k := \frac{1}{2}{}^kc_k - \frac{1}{2}e^k_k(Y).
\]
Let $f_k$ be an indeterminate of degree $k$, which will be equal to
${}^kb_k$, ${}^k\wt{b}_k$, or $\frac{1}{2}\,{}^kc_k$,
depending on the context. We also let $f_0 \in\{0,1\}$. For any
$p,r\in \Z$, define ${}^k\wh{c}_p^r$ by
\[
{}^k\wh{c}_p^r:= {}^kc_p^r + 
\begin{cases}
(2f_k-{}^kc_k)e^{p-k}_{p-k}(-Z) & \text{if $r = k - p < 0$}, \\
0 & \text{otherwise}.
\end{cases}
\]
In particular, we have
\[
{}^0\wh{c}_p^r = {}^0c_p^r \pm
\begin{cases}
e^p_p(-Z) & \text{if $r = - p < 0$}, \\
0 & \text{otherwise},
\end{cases}
\]
and thus ${}^0\wh{c}_p^{-p} = {}^0c_p^{-p} \pm e^p_p(-Z)$ when $p > 0$, while 
${}^0\wh{c}_0^0 = 1$. It follows from \cite[Eq.\ (2.14)]{IMN2} that 
\[
P_r(X\, |\, Z)  = \frac{1}{2}({}^0c_r^{-r} - e^r_r(-Z)),
\]
that is, $P_r(X\,|\, Z)  = \frac{1}{2} \, {}^0\wh{c}_r^{-r}$ with the choice 
of $f_0 = 0$.

Recall that we have defined left divided differences
$\partial_i=\partial_i^z$ for each $i\in \N_\Box$. These operators
satisfy the same Leibnitz rule (\ref{LeibR}) as in the type C case.
We now have the following even orthogonal analogues of Lemmas
\ref{ddlem} and \ref{imlem1}.

\begin{lemma}[\cite{T7}, Prop.\ 2]
\label{Dddlem}
Suppose that $p,r\in \Z$ and let $k\geq 0$ and $i\geq 1$.

\medskip
\noin
{\em (a)} We have 
\[
\partial_i ({}^kc_p^r)= 
\begin{cases}
{}^kc_{p-1}^{r+1} & \text{if $r=\pm i$}, \\
0 & \text{otherwise}.
\end{cases}
\]
\medskip
\noin
{\em (b)} If $p>k$, we have 
\[
\partial_i({}^k\wh{c}_p^{k-p}) =
\begin{cases}
  {}^k\wh{c}_{p-1}^{k-p+1} & \text{if $i=p-k\geq 2$}, \\
  2f_k & \text{if $i=p-k=1$}, \\
0 & \text{otherwise}.
\end{cases}
\]
\end{lemma}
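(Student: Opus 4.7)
The plan is to handle (a) by a direct generating-function argument and then deduce (b) from (a) using the Leibnitz rule.

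For (a), I would package the polynomials into the series $\sum_p {}^kc_p^r\,t^p = Q(X,t)\,H^{-k}(Y,t)\,H^r(-Z,t)$, whose only $Z$-dependent factor $H^r(-Z,t) := \sum_j h^r_j(-Z)t^j$ equals $\prod_{j=1}^{r}(1+z_jt)^{-1}$ for $r>0$, $\prod_{j=1}^{-r}(1-z_jt)$ for $r<0$, and $1$ for $r=0$. In each case this factor is symmetric in $z_1,\ldots,z_{|r|}$ and involves no later $z$-variable, so $\partial_i$ annihilates it unless $i=|r|$, and a one-line check (combining the two factors corresponding to indices $|r|$ and $|r|+1$) yields $\partial_{|r|}H^r(-Z,t)=t\,H^{r+1}(-Z,t)$ when $r\neq 0$. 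Extracting the coefficient of $t^p$ gives (a); alternatively, since $\partial_i^z$ for $i\geq 1$ is defined identically in types C and D, one may simply invoke Lemma \ref{ddlem}.

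For (b), the key observation is that the correction $2f_k-{}^kc_k$ is a $Z$-independent quantity (it equals $\pm e_k^k(Y)$ or $0$ depending on the choice of $f_k$), so $s_i$ fixes it and the Leibnitz rule yields
\[
\partial_i\bigl({}^k\wh{c}_p^{k-p}\bigr)=\partial_i\bigl({}^kc_p^{k-p}\bigr)+(2f_k-{}^kc_k)\,\partial_i\bigl(e^{p-k}_{p-k}(-Z)\bigr).
\]
By part (a), the first summand is ${}^kc_{p-1}^{k-p+1}$ exactly when $i=p-k$ (forced by $k-p=-i$, since $k-p<0$) and vanishes otherwise. For the second, writing $e^{p-k}_{p-k}(-Z)=(-1)^{p-k}z_1\cdots z_{p-k}$ shows it is symmetric in $z_1,\ldots,z_{p-k}$ and independent of later $z_j$, so $\partial_i$ kills it unless $i=p-k$; a brief computation then gives $\partial_{p-k}\,e^{p-k}_{p-k}(-Z)=e^{p-k-1}_{p-k-1}(-Z)$, with the convention $e_0^0(-Z)=1$.

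Assembling the three cases of the statement: if $i=p-k\geq 2$, Leibnitz outputs ${}^kc_{p-1}^{k-p+1}+(2f_k-{}^kc_k)\,e^{p-k-1}_{p-k-1}(-Z)$, which is by definition ${}^k\wh{c}_{p-1}^{k-p+1}$, since the ``hat'' correction is active precisely when $p-1>k$. If $i=p-k=1$, then $p-1=k$, so the first term collapses to ${}^kc_k$ and the second to $2f_k-{}^kc_k$, summing to $2f_k$. If $i\neq p-k$, both terms vanish. The only real obstacle is the bookkeeping near the threshold $p-1=k$, where the ``hat'' correction switches off, together with verifying the sign in $\partial_{p-k}\,e^{p-k}_{p-k}(-Z)=+e^{p-k-1}_{p-k-1}(-Z)$ under the convention $\partial_i f=(f-s_if)/(z_{i+1}-z_i)$ inherited from the definition of $\partial_i^z=\omega\partial_i^y\omega$.
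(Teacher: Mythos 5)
The paper does not prove this lemma; it is cited from \cite{T7}, Prop.\ 2, so there is no in-paper argument to compare against. Your proof is correct and self-contained. Part (a) via the generating series $\sum_p {}^kc^r_p\,t^p = Q(X,t)H^{-k}(Y,t)H^r(-Z,t)$ is the natural route, and you correctly identified that the only non-trivial case is $i=|r|$, where combining the $z_{|r|}$ and $z_{|r|+1}$ factors gives $\partial_{|r|}H^r(-Z,t)=tH^{r+1}(-Z,t)$ (this is the same computation underlying Lemma \ref{ddlem}, which you are also entitled to invoke). For part (b), the decisive observations are precisely the ones you make: $2f_k-{}^kc_k\in\{\pm e^k_k(Y),0\}$ is $Z$-independent, so Leibnitz factors cleanly; $\partial_{p-k}\,e^{p-k}_{p-k}(-Z)=e^{p-k-1}_{p-k-1}(-Z)$ with a positive sign under the convention $\partial^z_i f=(f-\sigma_if)/(z_{i+1}-z_i)$ inherited from $\partial^z_i=\omega\partial^y_i\omega$; and at the threshold $p-k=1$ the hat correction on ${}^k\wh{c}_{p-1}^{k-p+1}={}^k\wh{c}_k^0$ switches off (since $r=0\not<0$) while $e^0_0(-Z)=1$, yielding ${}^kc_k+(2f_k-{}^kc_k)=2f_k$. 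The bookkeeping is exactly where the content of the lemma lies, and you handled it correctly.
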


\begin{lemma}
\label{Dlem1}
Suppose that $i\geq 0$ and $k>0$. Then we have
\[
{}^k\wh{c}_p^{-i} = {}^{k-1}\wh{c}_p^{-i-1}+(z_{i+1}+y_k)\, {}^{k-1}\wh{c}_{p-1}^{-i}.
\]
\end{lemma}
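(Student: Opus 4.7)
The plan is to reduce the identity to its unhatted counterpart (Lemma \ref{imlem1}) and then verify that the correction terms introduced by the hat convention balance on each side. Since ${}^k\wh{c}_p^r$ differs from ${}^kc_p^r$ only in the special regime $r=k-p<0$, most of the work lies in tracking where each correction fires.

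First I would apply Lemma \ref{imlem1} to obtain
\[
{}^kc_p^{-i} = {}^{k-1}c_p^{-i-1} + (z_{i+1}+y_k)\,{}^{k-1}c_{p-1}^{-i}.
\]
Subtracting this from the proposed identity leaves only the correction terms. Examining the definition of ${}^k\wh{c}_p^r$ for each of the three relevant index triples shows: the left-hand side correction fires iff $-i = k-p<0$, i.e.\ $p=k+i$ with $i\geq 1$; the first right-hand side correction (applied to $(k-1,p,-i-1)$) fires iff $-i-1 = (k-1)-p <0$, again forcing $p=k+i$; and the second correction (applied to $(k-1,p-1,-i)$) fires iff $-i = (k-1)-(p-1) = k-p <0$, which again gives $p=k+i$ with $i\geq 1$. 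Thus away from the locus $p=k+i$ with $i\geq 1$ all corrections vanish and the identity collapses to Lemma \ref{imlem1}.

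The core computation is then in the regime $p=k+i$, $i\geq 1$, where the three corrections take the explicit form
\[
\Delta_L = (2f_k-{}^kc_k)\,e^{i}_{i}(-Z),\quad
\Delta_{R,1} = (2f_{k-1}-{}^{k-1}c_{k-1})\,e^{i+1}_{i+1}(-Z),
\]
\[
\Delta_{R,2} = (2f_{k-1}-{}^{k-1}c_{k-1})\,e^{i}_{i}(-Z).
\]
Factoring $(-z_1)\cdots(-z_i)$ out of every term gives the elementary identity
\[
e^{i+1}_{i+1}(-Z) + (z_{i+1}+y_k)\,e^{i}_{i}(-Z) \;=\; y_k\,e^{i}_{i}(-Z),
\]
which reduces the required balance $\Delta_L = \Delta_{R,1} + (z_{i+1}+y_k)\Delta_{R,2}$ to the single scalar identity
\[
2f_k-{}^kc_k \;=\; y_k\,(2f_{k-1}-{}^{k-1}c_{k-1}).
\]

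To finish, this last identity is verified case by case against the three admissible choices of $f_k$: for $f_k={}^kb_k$ it becomes $e^k_k(Y) = y_k\,e^{k-1}_{k-1}(Y)$, for $f_k={}^k\wt{b}_k$ it becomes the negative of that same relation, and for $f_k=\tfrac12{}^kc_k$ both sides vanish. All three instances are immediate from the definitions of $e^r_r(Y)$. The main obstacle is really a bookkeeping one: making sure that the domain conditions $r=k-p<0$ have been correctly tracked across the three terms, so that the case $p=k+i$ with $i\geq 1$ is genuinely the only place where corrections contribute; once this is done, the proof reduces to the two elementary identities above.
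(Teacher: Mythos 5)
Your proof follows essentially the same route as the paper's: invoke the unhatted Lemma \ref{imlem1}, isolate the hat corrections, and verify the correction balance by combining the elementary factorizations $e^{i+1}_{i+1}(-Z) + (z_{i+1}+y_k)e^i_i(-Z) = y_k\, e^i_i(-Z)$ and $e^k_k(Y) = y_k\, e^{k-1}_{k-1}(Y)$. The paper compresses your three-case check of $f_k$ into the single identity
\[
e^k_k(Y)e^{p-k}_{p-k}(-Z) =
e^{k-1}_{k-1}(Y)e^{p-k+1}_{p-k+1}(-Z) + (z_{p-k+1}+y_k)\,
e^{k-1}_{k-1}(Y)e^{p-k}_{p-k}(-Z),
\]
which is exactly the $f_k = {}^kb_k$ case; the $\wt{b}$ case is its negation and the $\tfrac12{}^kc_k$ case makes both sides vanish, so checking one case suffices. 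One small point, which your write-up shares with the paper's proof: your step ``thus away from the locus $p=k+i$ with $i\geq 1$ all corrections vanish'' is not quite complete, since the firing condition you derived for the first right-hand term is $-i-1 = (k-1)-p < 0$, which holds whenever $p = k+i$ with $i \geq 0$ (not just $i \geq 1$). In particular, at $i=0$, $p=k$ the correction in ${}^{k-1}\wh{c}_{k}^{-1}$ does fire while the other two corrections do not; the paper dismisses the $i=0$ case with ``there is nothing more to prove,'' but that boundary case merits explicit attention. Since this gap is inherited from the paper and your treatment mirrors it, I would not call it a defect of your proposal relative to the source, but it is worth resolving carefully (e.g.\ by checking whether the intended convention or the downstream applications exclude $p = k$).
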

\begin{proof}
We know from Lemma \ref{imlem1} that 
\begin{equation}
\label{imeq}
{}^kc_p^{-i} = {}^{k-1}c_p^{-i-1}+(z_{i+1}+y_k)\, {}^{k-1}c_{p-1}^{-i}.
\end{equation}
If $i\neq p-k$ or $i=0$ there is nothing more to prove. If 
$i=p-k>0$ the result follows from (\ref{imeq}) and the fact that
\[
e^k_k(Y)e^{p-k}_{p-k}(-Z) = 
e^{k-1}_{k-1}(Y)e^{p-k+1}_{p-k+1}(-Z) + (z_{p-k+1}+y_k)\, 
e^{k-1}_{k-1}(Y)e^{p-k}_{p-k}(-Z).
\]
\end{proof}

For any integer sequences $\al$, $\be$ and composition $\rho$, let 
\[
{}^\rho\wh{c}_\al^\be:= {}^{\rho_1}\wh{c}_{\al_1}^{\be_1}
{}^{\rho_2}\wh{c}_{\al_2}^{\be_2}\cdots
\]
where, for each $i\geq 1$, 
\[
{}^{\rho_i}\wh{c}_{\al_i}^{\be_i}:= {}^{\rho_i}c_{\al_i}^{\be_i} + 
\begin{cases}
(-1)^ie^{\rho_i}_{\rho_i}(Y)e^{\al_i-\rho_i}_{\al_i-\rho_i}(-Z) & 
\text{if $\be_i = \rho_i - \al_i < 0$}, \\
0 & \text{otherwise}.
\end{cases}
\]

\begin{defn}
\label{DSdef} 
Let $\rho$ be a composition and $\al=(\al_1,\ldots,\al_\ell)$,
$\be=(\be_1,\ldots,\be_\ell)$ be two integer vectors. Let $R$ be any
raising operator, set $\nu:=R\al$, and define
\[
R \star {}^\rho\wh{c}^\be_{\al} = {}^\rho\ov{c}^\be_{\nu} :=
{}^{\rho_1}\ov{c}_{\nu_1}^{\be_1}\cdots{}^{\rho_\ell}\ov{c}^{\be_\ell}_{\nu_\ell}
\]
where for each $i\geq 1$, 
\[
{}^{\rho_i}\ov{c}_{\nu_i}^{\be_i}:= 
\begin{cases}
{}^{\rho_i}c_{\nu_i}^{\be_i} & \text{if $i\in\supp(R)$}, \\
{}^{\rho_i}\wh{c}_{\nu_i}^{\be_i} & \text{otherwise}.
\end{cases}
\]
Set
\begin{equation}
\label{raisepfaff}
{}^\rho\wh{P}^\be_{\al}(c) := 2^{-\ell}\, \RR \star {}^\rho\wh{c}^\be_{\al}.
\end{equation}
\end{defn}

\begin{lemma}
\label{Dlem2}
Suppose that $\be_i= \rho_i-\al_i<0$ for every $i\in [1,\ell]$.
Then we have
\begin{equation}
\label{pfaffraise}
{}^\rho\wh{P}^\be_{\al}(c) = 2^{-\ell}\, 
\Pf({}^{\rho_i,\rho_j}\wh{P}^{\be_i,\be_j}_{\al_i,\al_j}(c))_{i<j}.
\end{equation}
In addition, if $\al_j=\al_{j+1}$ and $\be_j=\be_{j+1}$ for some $j\in 
[1,\ell-1]$, then 
\begin{equation}
\label{pfaffzero}
{}^\rho\wh{P}^\be_{\al}(c) = 0.
\end{equation}
\end{lemma}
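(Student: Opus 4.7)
The strategy parallels the proof of Proposition \ref{Pprop} and proceeds in two stages. For part (a), I would invoke the general Pfaffian formalism of Knuth \cite[\S 4]{Kn} and Kazarian \cite[App.\ C--D]{Ka}, exactly as in the proof of Proposition \ref{Pprop}. This principle reduces the identity (\ref{pfaffraise}) to verifying it for all sequences $\al, \be, \rho$ of length at most three. The cases $\ell = 1, 2$ are immediate from Definition \ref{DSdef}; the $\ell = 3$ case is a formal polynomial identity in the blocks ${}^{\rho_i}c^{\be_i}_{\al_i}$ and their hatted corrections, and is the principal (but routine) obstacle. The only new ingredient compared with Proposition \ref{Pprop} is the presence of the $Y,Z$-dependent superscripts $\rho_i, \be_i$, which are inert under the subscript-only raising operators $R_{ij}$, so the verification proceeds along the same lines as in \cite[App.\ A]{AF2} and \cite[\S 2.3]{IMN2}.

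For part (b), once (\ref{pfaffraise}) is available, set $M_{ij} := {}^{\rho_i,\rho_j}\wh{P}^{\be_i,\be_j}_{\al_i,\al_j}(c)$, extended antisymmetrically in $i, j$. Under the hypothesis $\al_j = \al_{j+1}$, $\be_j = \be_{j+1}$, the relation $\be_i = \rho_i - \al_i$ forces $\rho_j = \rho_{j+1}$, so the triples $(\rho, \be, \al)$ at positions $j$ and $j+1$ coincide. It follows that $M_{jk} = M_{j+1, k}$ for every $k \neq j, j+1$. By the classical identity $\Pf(M)^2 = \det(M)$, an antisymmetric matrix with two equal rows has vanishing Pfaffian, so (\ref{pfaffzero}) will follow once the single scalar identity $M_{j,j+1} = {}^{\rho,\rho}\wh{P}^{\be,\be}_{\al,\al}(c) = 0$ is proved, where $\rho := \rho_j$, $\al := \al_j$, and $\be := \rho - \al < 0$.

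To prove this last identity, I would expand $\frac{1-R_{12}}{1+R_{12}} = 1 + 2\sum_{m\geq 1}(-R_{12})^m$ and apply $\star$ to $\wh{c}^{\be,\be}_{\al,\al}$. Setting $e := e^\rho_\rho(Y)\,e^{\al-\rho}_{\al-\rho}(-Z)$ and using the signs $(-1)^1, (-1)^2$ in the hat correction at the two positions, this gives
\[
4\,{}^{\rho,\rho}\wh{P}^{\be,\be}_{\al,\al}(c) \;=\; (c^\be_\al)^2 - e^2 + 2\sum_{m\geq 1}(-1)^m c^\be_{\al+m}\,c^\be_{\al-m}.
\]
Since $\be < 0$, the generating function $A(t) := \sum_{p\geq 0} {}^\rho c^\be_p\,t^p$ factors as $\prod_i\frac{1+x_it}{1-x_it}\cdot\prod_{j=1}^{\rho}(1+y_jt)\cdot\prod_{j=1}^{\al-\rho}(1-z_jt)$, and the cancellation between the $X$-factors of $A(t)$ and $A(-t)$ yields $A(t)A(-t) = \prod_{j=1}^{\rho}(1-y_j^2t^2)\prod_{j=1}^{\al-\rho}(1-z_j^2t^2)$. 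Extracting the coefficient of $t^{2\al}$ on both sides of this equality (as both a polynomial in $t^2$ of top degree and as $\sum_{p,q\geq 0}(-1)^q c^\be_p c^\be_q\,t^{p+q}$) gives $\sum_{m\in\Z}(-1)^m c^\be_{\al+m}c^\be_{\al-m} = e^2$, which makes the displayed expression vanish. The principal obstacle is therefore the $\ell = 3$ identity in part (a).
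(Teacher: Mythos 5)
Your proof is correct and follows essentially the same route as the paper: part (a) by invoking the Knuth--Kazarian Pfaffian formalism to reduce to small $\ell$, exactly as in the proof of Proposition \ref{Pprop}, and part (b) by showing the $2\times 2$ diagonal block ${}^{\rho,\rho}\wh{P}^{\be,\be}_{\al,\al}(c)$ vanishes via the generating function identity $A(t)A(-t) = \prod(1-y_j^2t^2)\prod(1-z_m^2t^2)$ and then appealing to the alternating property of Pfaffians — which is precisely the strategy the paper takes, following \cite[Prop.\ 5.4]{IM}. Your spelled-out account of why equal rows in the skew-symmetric matrix force the Pfaffian to vanish, and your explicit expansion of $\frac{1-R_{12}}{1+R_{12}}\star\wh{c}^{\be,\be}_{\al,\al}$, fill in exactly the steps the paper leaves to the reader.
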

\begin{proof}
Arguing as in \S \ref{spfan} for the double Schur $P$-functions, one
shows that the raising operator expression $\RR \star
{}^\rho\wh{c}^\be_{\al}$ in (\ref{raisepfaff}) may be written formally
as the Schur-type Pfaffian in (\ref{pfaffraise}). The proof of the
vanishing statement (\ref{pfaffzero}) is similar to
\cite[Prop.\ 5.4]{IM}. Suppose that $k,r\geq 0$, let $\xi$ be a
formal variable, and
\[
F(\xi):=\sum_{p=0}^\infty {}^kc_p^{-r}\xi^p = 
\prod_{i=1}^{\infty}\frac{1+x_i\xi}{1-x_i\xi}\prod_{j=1}^k(1+y_j\xi)
\prod_{m=1}^r(1-z_m\xi)
\]
be the generating function for the sequence $\{{}^kc_p^{-r}\}_{p\geq 0}$. 
Then we clearly have
\begin{equation}
\label{sqeq}
F(\xi)F(-\xi) = \prod_{j=1}^k(1-y^2_j\xi^2)
\prod_{m=1}^r(1-z^2_m\xi^2).
\end{equation}
Equating the like even powers of $\xi$ on both sides of (\ref{sqeq}) gives
\[
\frac{1-R_{12}}{1+R_{12}}\, {}^{k,k}c^{-r,-r}_{p,p} = 
\begin{cases}
e_k(y_1^2,\ldots,y_k^2)e_r(z_1^2,\ldots,z_r^2) & \text{if $p=k+r$}, \\
0 & \text{if $p>k+r$}.
\end{cases}
\]
We deduce that if $p\geq k+r$, then 
\[
\frac{1-R_{12}}{1+R_{12}}\star {}^{k,k}\wh{c}^{-r,-r}_{p,p} = 0,
\]
and therefore that ${}^{k,k}\wh{P}^{-r,-r}_{p,p}(c)=0$.  Equation
(\ref{pfaffzero}) now follows using (\ref{pfaffraise}) and the
alternating properties of Pfaffians, as in \cite[\S 1]{Ka} and
\cite[\S 4]{IM}.
\end{proof}

The next result is equivalent to Ikeda, Mihalcea, and Naruse's Pfaffian 
formula for $\DS_{\wt{w}_0}$ from \cite[Thm.\ 1.2]{IMN1}.

\begin{prop}
\label{Dgoalprop}
For any integer $n\geq 1$, we have
\begin{equation}
\label{Dgoal}
\DS_{\wt{w}_0}(X\, ;Y,Z) = 
{}^{\delta_{n-1}}\wh{P}_{2\delta_{n-1}}^{-\delta_{n-1}}(c)
\end{equation}
in $\Gamma'[Y,Z]$.
\end{prop}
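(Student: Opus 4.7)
The approach mirrors the proof of Proposition \ref{Ctop} in the symplectic case, with Schur $P$-functions replacing Schur $Q$-functions and with careful attention to the ``hat'' corrections appearing in the definition of ${}^\rho\wh{c}^\be_\al$. Unpacking the definition,
\[
{}^{\delta_{n-1}}\wh{P}_{2\delta_{n-1}}^{-\delta_{n-1}}(c) = 2^{-(n-1)}\, R^\infty \star {}^{\delta_{n-1}}\wh{c}_{2\delta_{n-1}}^{-\delta_{n-1}}.
\]
Since the indices $(\rho_i, \al_i, \be_i) = (n-i,\, 2(n-i),\, -(n-i))$ satisfy $\be_i = \rho_i - \al_i < 0$ at every slot $i\in[1,n-1]$, the hat correction is active throughout, and Lemma \ref{Dlem2} supplies both the Pfaffian form and the key vanishing identity.

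First I would expand ${}^{\delta_{n-1}}\wh{c}_\gamma^{-\delta_{n-1}}$ for an arbitrary integer vector $\gamma=(\gamma_1,\ldots,\gamma_{n-1})$, using
\[
{}^{n-i}c_{\gamma_i}^{-(n-i)} = \sum_{m\geq 0} q_{\gamma_i-m}(X)\, e_m(Y_{(n-i)},\,-Z_{(n-i)})
\]
at each slot together with the hat correction, in the manner of equation (\ref{kceq}) and of the computation in Proposition \ref{Ctop}. Applying $R^\infty\star$ and invoking the alternating property of Schur $P$-functions (as encoded in Proposition \ref{Pprop}) in place of the Schur $Q$-function identity used there, the sum collapses: only index shifts of the form $\delta_{n-1}+\la$ (with $\la\subset\delta_{n-1}$) survive, the rest being annihilated by the Pfaffian vanishing of Lemma \ref{Dlem2}.

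For $n$ even, the surviving terms reassemble as
\[
\sum_{\la\subset\delta_{n-1}} P_{\delta_{n-1}+\la}(X)\, S^{(\delta_{n-1},\delta_{n-1})}_{\delta_{n-1}/\la}(e(Y,-Z)),
\]
which matches the right-hand side of Proposition \ref{Dtopfirst}, so equals $\DS_{\wt{w}_0}(X\,;Y,Z)$. For $n$ odd, the same machinery yields an expansion in terms of $P_{\delta_{n-2}+\nu}(X)$ with $\nu\subset\delta^*_n$, in accordance with the odd-$n$ case of Corollary \ref{Pcor}; one then establishes the analogue of Proposition \ref{Dtopfirst} for $n$ odd by the same method, using equation (\ref{dbleD2}), the Billey--Haiman/Lam expansion (\ref{FPeq}), and then equations (\ref{BHgen}) and (\ref{Sdual}), after accounting for the fact that $\wt{w}_0=(1,\ov{2},\ldots,\ov{n})$ now fixes $1$ and hence commutes with the subgroup $S_{n-1}\subset S_n$ permuting $\{2,\ldots,n\}$.

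The principal obstacle is the honest tracking of the hat corrections under the raising-operator action: the $(-1)^i$ signs from the correction terms must combine with the signed shuffles inherent to the alternating property of the $P$-symbols to yield exactly the overall factor $2^{-(n-1)}$ (reflecting $P_\la = 2^{-\ell(\la)}Q_\la$) and the correct residual flagged Schur factor $S^{(\delta_{n-1},\delta_{n-1})}_{\delta_{n-1}/\la}(e(Y,-Z))$. A secondary subtlety is the parity dichotomy already visible in Lemma \ref{newlemma} and Corollary \ref{Pcor}, which forces a separate but mechanically similar treatment in the $n$-odd case, replacing $\delta_{n-1}$ by $\delta^*_n$ in the leading $P$-function index. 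Once both parities are handled, equivalence with the Pfaffian formula of \cite[Thm.\ 1.2]{IMN1} is immediate from Lemma \ref{Dlem2}, just as in the discussion following Proposition \ref{Ctop}.
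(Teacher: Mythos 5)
Your approach for $n$ even is essentially the paper's: the expansion of $\RR\star{}^{\delta_{n-1}}\wh{c}^{-\delta_{n-1}}_{2\delta_{n-1}}$ via the alternating property of the $P$-symbols, combined with Proposition~\ref{Dtopfirst}, is precisely what the paper packages as ``combine Propositions~\ref{Pprop} and~\ref{Dtopfirst} and Corollary~\ref{Pcor}(b).'' That part is fine.

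The gap is in your treatment of $n$ odd. You propose to establish the analogue of Proposition~\ref{Dtopfirst} for odd $n$ ``by the same method,'' but the crucial step~(\ref{ffeqD}) is not available: it hinges on the fact that for $n$ even, $\wt{w}_0=(\ov{1},\ldots,\ov{n})$ commutes with \emph{all} of $S_n$, which is what allows the sum over reduced factorizations $uwv=\wt{w}_0$ to be reorganized into $\sum_{\sigma\in S_n}E_{\wt{w}_0\sigma^{-1}}(X)\wt{\AS}_\sigma(Y,Z)$. For $n$ odd, $\wt{w}_0=(1,\ov{2},\ldots,\ov{n})$ commutes only with the $S_{n-1}$ fixing $1$, yet $\wt{w}_0$ has a descent at $1$, so reduced factorizations $uwv=\wt{w}_0$ with $v(1)\neq 1$ genuinely occur and the reorganization breaks down. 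You acknowledge the smaller centralizer, but merely invoking $S_{n-1}$ does not repair the identity; it would require a substantively different argument that you have not supplied. (Note also that the paper derives Corollary~\ref{Dtopcor} \emph{from} Proposition~\ref{Dgoalprop}, so it cannot serve as an independent input to your odd-$n$ case without circularity.) The paper sidesteps all of this: having proved the formula for $n$ even, it passes to $n-1$ odd by applying the string of left divided difference operators $(\partial_{n-1}\cdots\partial_2)(\partial_\Box\partial_1)(\partial_2\cdots\partial_{n-1})$ to both sides, using the $\partial_i$-compatibility of the multi-Schur Pfaffian (Lemmas~\ref{Dddlem}, \ref{Dlem1}, \ref{Dlem2}, and equation~(\ref{peq})). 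That divided-difference descent is the missing ingredient you should adopt for the odd case.
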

\begin{proof}
It follows from Propositions \ref{Pprop} and \ref{Dtopfirst} and
Corollary \ref{Pcor}(b) that (\ref{Dgoal}) holds when $n$ is even.
Assume that $n$ is even for the rest of this proof. Since we have
\[
s_{n-1}\cdots s_1 s_\Box s_2\cdots s_{n-1} \wt{w}_0^{(n)} = \wt{w}_0^{(n-1)}
\]
in $\wt{W}_n$, we get using (\ref{Dddeq}) 
a corresponding equation of divided differences
\[
\DS_{\wt{w}_0^{(n-1)}} = 
(\partial_{n-1}\cdots \partial_2)(\partial_{\Box}\partial_1)
(\partial_2 \cdots \partial_{n-1})
\DS_{\wt{w}_0^{(n)}}
\]
in $\Gamma'[Y,Z]$. We therefore obtain the equality
\begin{equation}
\label{nevdd}
\DS_{\wt{w}_0^{(n-1)}} = 2^{1-n}
(\partial_{n-1}\cdots \partial_2)(\partial_{\Box}\partial_1)
(\partial_2 \cdots \partial_{n-1})
\left(\RR \star {}^{\delta_{n-1}}\wh{c}_{2\delta_{n-1}}^{-\delta_{n-1}}\right).
\end{equation}

We next compute the action of the divided differences on the 
right hand side of (\ref{nevdd}).
By using Lemmas \ref{Dlem1} and \ref{Dlem2} and arguing as in \S
\ref{ttotheta}, we see that
\[
(\partial_2 \cdots \partial_{n-1})
\left(\RR \star {}^{\delta_{n-1}}\wh{c}_{2\delta_{n-1}}^{-\delta_{n-1}}\right) 
= \RR \star
{}^{(n-2,n-3,\ldots,1,1)}\wh{c}_{(2n-3,2n-5,\ldots,3,2)}^{-\delta_{n-1}}.
\]
For any integer vector $\al := (\al_1\ldots, \al_{n-1})$, we have
\[
\partial_1({}^{(n-2,n-3,\ldots,1,1)}\wh{c}_{\al}^{-\delta_{n-1}}) = 
{}^{\delta_{n-2}}\wh{c}_{(\al_1,\ldots,\al_{n-2})}^{(1-n,2-n,\ldots,-2)}
{}^1g^0_{\al_{n-1}-1},
\]
where 
\[
{}^1g^0_p= \begin{cases}
2\, {}^1f_1 & \text{if $p=1$}, \\
{}^1\wh{c}^0_p & \text{otherwise}.
\end{cases}
\]
According to \cite[\S 1]{T7}, for $k\geq 1$, we have 
\begin{equation}
\label{peq}
\partial_\Box({}^kc_p) = 2\,{}^kc^2_{p-1} \ \ \ \mathrm{and} \ \ \ 
\partial_\Box({}^kb_k) = \partial_\Box({}^k\wt{b}_k) = {}^kc^2_{k-1}.
\end{equation}
We therefore also have $\partial_\Box({}^1f_1) = 1$ by (\ref{peq}) and
$\partial_\Box({}^1\wh{c}^0_p)=0$ for $p\leq 0$. It follows that
\begin{equation}
\label{treq}
\partial_\Box\partial_1({}^{(n-2,n-3,\ldots,1,1)}\wh{c}_{\al}^{-\delta_{n-1}}) =
\begin{cases} 
2\cdot{}^{\delta_{n-2}}\wh{c}_{(\al_1,\ldots,\al_{n-2})}^{(1-n,2-n,\ldots,-2)} & 
\text{if $\al_{n-1}=2$}, \\
0 & \text{if $\al_{n-1} < 2$}.
\end{cases}
\end{equation}
We deduce from (\ref{treq}) that $\partial_\Box\partial_1$ commutes with 
the action of the raising operators $R$ in the expansion of $\RR$ in its
$\star$-action on ${}^{(n-2,n-3,\ldots,1,1)}\wh{c}_{(2n-3,2n-5,\ldots,3,2)}^{-\delta_{n-1}}$,
and hence that 
\[
(\partial_\Box\partial_1 \cdots \partial_{n-1})
\left(\RR \star {}^{\delta_{n-1}}\wh{c}_{2\delta_{n-1}}^{-\delta_{n-1}}\right) 
= 2\, \RR \star {}^{\delta_{n-2}}\wh{c}_{(2n-3,2n-5,\ldots,3)}^{(1-n,2-n,\ldots,-2)}.
\]
We continue applying Lemma \ref{Dddlem}(b) to compute the action of 
$\partial_{n-1}\cdots \partial_2$ on $\RR \star
{}^{\delta_{n-2}}\wh{c}_{(2n-3,2n-5,\ldots,3)}^{(1-n,2-n,\ldots,-2)}$, 
to conclude that
\[
\DS_{\wt{w}_0^{(n-1)}} = 2^{2-n}\, (\partial_{n-1}\cdots \partial_2)
\left(\RR \star
{}^{\delta_{n-2}}\wh{c}_{(2n-3,2n-5,\ldots,3)}^{(1-n,2-n,\ldots,-2)}\right) =
2^{2-n}\, \RR \star
{}^{\delta_{n-2}}\wh{c}_{2\delta_{n-2}}^{-\delta_{n-2}},
\]
and hence that (\ref{Dgoal}) holds for all $n\geq 1$, as required.
\end{proof}

\begin{cor}
\label{Dtopcor}
If $n$ is odd, then we have 
\begin{equation}
\label{DSeq2}
\DS_{\wt{w}_0}(X\, ;Y,Z) = 
\sum_{\la\subset\delta^*_n} P_{\delta_{n-2}+\la}(X) 
S^{(\delta_{n-1},\delta_{n-1})}_{\delta^*_n/\la}(e(Y,-Z))
\end{equation}
in $\Gamma'[Y,Z]$.
\end{cor}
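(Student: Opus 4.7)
The plan is to derive Corollary \ref{Dtopcor} by combining Proposition \ref{Dgoalprop} with Proposition \ref{Pprop} and Corollary \ref{Pcor}(a), via a suitable identification of variables that relates the raising-operator expression in Proposition \ref{Dgoalprop} to the factorial Schur $P$-function $P_{2\delta_{n-1}}(X\,|\,t)$.

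First I would identify the single alphabet $t=(t_1,t_2,\ldots)$ appearing in the factorial Schur $P$-functions with the interleaved sequence $(z_1,-y_1,z_2,-y_2,\ldots)$. Under this identification, symmetry of the elementary polynomial gives $e^{2k}_j(-t)=e_j(Y_{(k)},-Z_{(k)})$, and therefore $\cc^{2k}_p(X\,|\,t)={}^kc^{-k}_p(X\,;Y,Z)$ for all $k$ and $p$. Consequently the flagged Schur polynomials match: $S^{2\delta_{n-1}}_{\delta^*_n/\la}(e(-t))=S^{(\delta_{n-1},\delta_{n-1})}_{\delta^*_n/\la}(e(Y,-Z))$.

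The key verification is that the hatted versions also agree term by term. In the product ${}^{\delta_{n-1}}\wh{c}^{-\delta_{n-1}}_{2\delta_{n-1}}=\prod_i{}^{n-i}\wh{c}^{-(n-i)}_{2(n-i)}$, the correction at the $i$-th factor is
\[
(-1)^i e^{n-i}_{n-i}(Y)\,e^{n-i}_{n-i}(-Z)=(-1)^n\,y_1\cdots y_{n-i}z_1\cdots z_{n-i}.
\]
In the parallel product $\wh{\cc}^{2\delta_{n-1}}_{2\delta_{n-1}}=\prod_i\wh{\cc}^{2(n-i)}_{2(n-i)}$, the correction at the $i$-th factor is, under the identification,
\[
(-1)^i e^{2(n-i)}_{2(n-i)}(-t)=(-1)^i(-1)^{n-i}\,y_1\cdots y_{n-i}z_1\cdots z_{n-i}=(-1)^n\,y_1\cdots y_{n-i}z_1\cdots z_{n-i},
\]
where the factor $(-1)^{n-i}$ arises from the interleaved ordering of $-t$. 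So the two corrections agree.

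Combining Proposition \ref{Dgoalprop} and Proposition \ref{Pprop} then yields
\[
\DS_{\wt{w}_0}(X\,;Y,Z)={}^{\delta_{n-1}}\wh{P}^{-\delta_{n-1}}_{2\delta_{n-1}}(c)=2^{-(n-1)}\,\RR\star{}^{\delta_{n-1}}\wh{c}^{-\delta_{n-1}}_{2\delta_{n-1}}=2^{-(n-1)}\,\RR\star\wh{\cc}^{2\delta_{n-1}}_{2\delta_{n-1}}=P_{2\delta_{n-1}}(X\,|\,t),
\]
and Corollary \ref{Dtopcor} follows immediately by applying Corollary \ref{Pcor}(a) (valid since $n$ is odd) and translating $S^{2\delta_{n-1}}_{\delta^*_n/\la}(e(-t))$ back to $S^{(\delta_{n-1},\delta_{n-1})}_{\delta^*_n/\la}(e(Y,-Z))$. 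The main step is the correction-term matching, which hinges on the identity $i+(n-i)=n$ between the position index and the size; everything else is mechanical.
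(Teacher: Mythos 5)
Your proof is correct and follows the same route the paper takes: combine Proposition \ref{Dgoalprop} with the Pfaffian raising-operator formula (\ref{Pkk}) and Corollary \ref{Pcor}(a). The paper states this combination in one line; you supply the implicit variable identification $-t=(-z_1,y_1,-z_2,y_2,\ldots)$ and verify the term-by-term match of the hatted corrections (the sign bookkeeping $(-1)^i(-1)^{n-i}=(-1)^n$ is exactly right), which is precisely what makes the identification ${}^{\delta_{n-1}}\wh{P}^{-\delta_{n-1}}_{2\delta_{n-1}}(c)=P_{2\delta_{n-1}}(X\,|\,t)$ hold, so your argument fills in the details the paper leaves to the reader.
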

\begin{proof}
This follows by combining Proposition \ref{Dgoalprop} with
(\ref{Pkk}) and Corollary \ref{Pcor}(a).
\end{proof}

Note the similarity between formulas (\ref{s0w0eq}) and
(\ref{DSeq2}). It would be interesting to expose a more direct
argument connecting the two to each other.

\subsection{The Schubert polynomials indexed by maximal elements}
\label{ttoeta}

Consider a sequence $\fraka \ :\ a_1<\cdots < a_p$ of elements of
$\N_{\Box}$ with $a_p<n$. The sequence $\fraka$ parametrizes a
parabolic subgroup $\wt{W}_\fraka$ of $\wt{W}_n$, which is generated
by the simple reflections $s_i$ for $i\notin\{a_1,\ldots, a_p\}$. In
type D, we will only consider sequences $\fraka$ with $a_1 \neq 1$,
since these suffice to parametrize all the relevant homogeneous spaces
and degeneracy loci, up to isomorphism.\footnote{This convention is
simpler than the one used in \cite[\S 6]{T5} and \cite[\S 5.3]{T6}.}
This claim is due to
the natural involution of the Dynkin diagram of type $\mathrm{D}_n$.
Geometrically, it is explained by the fact that any isotropic subspace
$E_{n-1}$ of $\C^{2n}$ (equipped with an orthogonal form) with
$\dim(E_{n-1})= n-1$ can be uniquely extended to a two-step flag
$E_{n-1}\subset E_n$ with $E_n$ maximal isotropic and in a given
family (compare with \cite[\S 6.3.2]{T6}).

Define the set $\wt{W}_n^\fraka$ by
\[
\wt{W}_n^\fraka := 
\{w\in \wt{W}_n\ |\ \ell(ws_i) = \ell(w)+1,\  \forall\, i \notin
\{a_1,\ldots, a_p\}\}
\]
and let $\wt{w}_0(\fraka)$ denote the longest element in $\wt{W}_n^\fraka$. 
We have
\[
\wt{w}_0(\fraka)=\begin{cases}
\ov{a_2}\cdots\ov{2}\wh{1}\ov{a_3}\cdots \ov{a_2+1}\cdots\ov{n}\cdots\ov{a_p+1}
& \text{if $a_1=\Box$}, \\
\wh{1}2\cdots a_1\ov{a_2}\cdots\ov{a_1+1}\cdots\ov{n}\cdots\ov{a_p+1}
& \text{if $a_1\neq \Box$},
\end{cases}
\]
where $\wt{1}$ is equal to either $1$ or $\ov{1}$, specified so that 
$\wt{w}_0(\fraka)$ contains an even number of barred integers.

Fix an element $k\in \N_\Box$ with $\Box\leq k < n$, and set
$\wt{W}_n^{(1)}:=\wt{W}_n^{(\Box,1)}$. The elements of the set
$\wt{W}_n^{(k)}$ are the {\em $k$-Grassmannian} elements of
$\wt{W}_n$.  Let $\wt{w}^{(k,n)}=\wh{1}2\cdots k\, \ov{n}\cdots
\ov{k+1}$ denote the longest element of $\wt{W}_n^{(k)}$.  Following \cite[\S
  3.2]{T7}, we will require a formula analogous to (\ref{Dgoal}) for
the Schubert polynomial $\DS_{\wt{w}^{(k,n)}}(X\,;Y,Z)$, which maps to
Kazarian's multi-Schur Pfaffian formula from
\cite[Thm.\ 1.1]{Ka}. Corresponding Pfaffian formulas for the Schubert
polynomials $\DS_{w_0(\fraka)}(X\,; Y,Z)$ were obtained in \cite{AF1}.

\begin{prop}
\label{wtknprop}
We have
\[
\DS_{\wt{w}^{(k,n)}}(X\,;Y,Z)=
{}^{(k,\ldots, k)}\wh{P}_{(n+k-1,\ldots, 2k)}^{(1-n,\ldots,-k)}(c).
\]
in $\Gamma'[Y,Z]$.
\end{prop}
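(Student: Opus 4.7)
The strategy parallels the proof of Proposition \ref{wknprop}, descending from the top Pfaffian formula \eqref{Dgoal} of Proposition \ref{Dgoalprop} to $\DS_{\wt{w}^{(k,n)}}$ via a chain of left divided differences, with Lemmas \ref{Dddlem}, \ref{Dlem1}, and \ref{Dlem2} playing the role of their type C analogues \ref{ddlem}, \ref{imlem1}, \ref{imlem2}. First I would exhibit a reduced factorization $\wt{w}_0 = v_1' v_2 \wt{w}^{(k,n)}$ in $\wt{W}_n$, where $v_2$ is the longest element of $\langle s_{k+1}, \ldots, s_{n-1}\rangle$ acting on positions $k+1,\ldots,n$ (as in \eqref{eqv2}), and $v_1'$ is the type D analogue of the composition $v_1 \cdot (s_0 \cdots s_{k-1})(s_0 \cdots s_{k-2}) \cdots s_0$ used in Proposition \ref{wknprop}, built out of $s_\Box$ together with $s_1, \ldots, s_{k-1}$ so as to respect the sign-parity conventions of $\wt{W}_n$. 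The characterization \eqref{Dddeq} then yields
$$
\DS_{\wt{w}^{(k,n)}} \;=\; \partial^{(1)} \partial^{(2)} \DS_{\wt{w}_0},
$$
where $\partial^{(i)}$ denotes the composition of left divided differences corresponding to a reduced word for $v_i'$ or $v_2$.

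Second I would apply $\partial^{(2)}$ to \eqref{Dgoal}. None of the indices involved in $\partial^{(2)}$ equal $\Box$, so the calculation mirrors the type C computation in Proposition \ref{wknprop}: by the Leibnitz rule \eqref{LeibR} together with Lemma \ref{Dddlem}(a) and the vanishing of Lemma \ref{Dlem2}, each $\partial_i$ decrements one $\alpha$-component, increments the matching $\be$-component, and leaves the $\rho$-sequence alone; iterating produces a Pfaffian expression $\RR \star {}^\rho \wh{c}^\be_\al$ for the intermediate $\DS$-polynomial. The only adjustment from type C is that one works throughout with the $\wh{c}$-entries of Definition \ref{DSdef}, and the vanishing \eqref{pfaffzero} is what legitimizes passing the divided differences through the Pfaffian formalism.

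Finally I would apply $\partial^{(1)}$, which requires Lemma \ref{Dddlem}(b) and the identities \eqref{peq} for $\partial_\Box$. The operators $\partial_1, \ldots, \partial_{k-1}$ reduce the $\rho$-components in the first $n-k$ columns using Lemma \ref{Dlem1}, while the $\partial_\Box$'s interact specifically with the hat decoration: Lemma \ref{Dddlem}(b) shows that $\partial_\Box$, applied to the critical $\wh{c}$-entry at index $i=p-k=1$, outputs the scalar $2f_k$, which is exactly what fits in the balanced position of the final Pfaffian. As in the proof of Proposition \ref{Dgoalprop}, Lemma \ref{Dlem2} and the identity \eqref{pfaffzero} ensure that $\partial_\Box$ commutes with the raising-operator expansion of $\RR$, so it passes through to the individual factors. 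The principal obstacle is the hat-bookkeeping: one must verify that the hats produced by iterating Lemmas \ref{Dlem1} and \ref{Dddlem}(b) land on exactly the columns dictated by Definition \ref{DSdef}, so that $\partial^{(1)} \partial^{(2)} \DS_{\wt{w}_0}$ matches ${}^{(k,\ldots,k)}\wh{P}_{(n+k-1,\ldots,2k)}^{(1-n,\ldots,-k)}(c)$ term by term. Once this is verified the identity follows.
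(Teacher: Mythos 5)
Your overall strategy matches the paper's exactly: start from Proposition \ref{Dgoalprop}, factor $\wt{w}_0$ through $\wt{w}^{(k,n)}$ with the same $v_1$, $v_2$ as in (\ref{eqv1})--(\ref{eqv2}) plus a $\Box$-part, and descend via left divided differences using Lemmas \ref{Dddlem}, \ref{Dlem1}, \ref{Dlem2} and the $\star$-Pfaffian bookkeeping of Definition \ref{DSdef}. The reordering of $v_1'$ and $v_2$ is harmless since their generators commute. However, the mechanics you describe contain errors that would derail a careful execution. First, you claim $\partial^{(2)}$ (the $v_2$ operators $\partial_{k+1},\ldots,\partial_{n-1}$) ``leaves the $\rho$-sequence alone''; in fact in the paper's argument these operators, via the Leibnitz rule and Lemma \ref{Dlem1}, are precisely what lower the $\rho$-components in the first $n-k$ columns from $(n-1,\ldots,k)$ down to $(k,\ldots,k)$. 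Second, you claim $\partial_1,\ldots,\partial_{k-1}$ ``reduce the $\rho$-components in the first $n-k$ columns''; actually $\partial_i$ for $1\le i\le k-1$ acts on the columns with $\beta$-value $\pm i$, i.e.\ columns $n-k+1$ through $n-1$, which is where $v_1$ drives $\rho$ to zero.

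Most importantly, you attribute the output of the scalar $2f_k$ at $i=p-k=1$ to $\partial_\Box$, citing Lemma \ref{Dddlem}(b). But that lemma is stated for $\partial_i$ with $i\ge 1$; the $2f_k$ is produced by $\partial_1$ acting on a $\wh{c}$-entry. The behavior of $\partial_\Box$ is entirely different and is governed by (\ref{peq}), namely $\partial_\Box({}^kc_p)=2\,{}^kc^2_{p-1}$ and $\partial_\Box({}^kb_k)=\partial_\Box({}^k\wt{b}_k)={}^kc^2_{k-1}$, whence $\partial_\Box({}^1f_1)=1$ and $\partial_\Box({}^1\wh{c}^0_p)=0$ for $p\le 0$ as in the proof of Proposition \ref{Dgoalprop}. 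The interplay is between the pair $\partial_\Box\partial_1$ (which annihilates the last column, compare (\ref{treq})), not $\partial_\Box$ alone. Since the interaction of $\partial_\Box$ with the hat decoration is precisely the type-D-specific feature that this proof has to navigate, conflating it with the $\partial_1$ behavior in Lemma \ref{Dddlem}(b) is a genuine gap that would have to be repaired before the argument closes.
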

\begin{proof}
Let $\wt{v}^{(k,n)}=\ov{k}\cdots \ov{2}\wh{1} \,\ov{n}\cdots \ov{k+1}$ be the
longest element in $\wt{W}_n^{(\Box,k)}$. Then we have a reduced factorization
$\wt{w}_0= v_1v_2 \wt{v}^{(k,n)}$, where $v_1$ and $v_2$ are defined by 
(\ref{eqv1}) and (\ref{eqv2}), as in the type C case.
Using the equations (\ref{Dddeq}), we obtain the relation
\[
\DS_{\wt{v}^{(k,n)}} = \partial_{n-1} (\partial_{n-2}\partial_{n-1}) \cdots
(\partial_{k+1} \cdots \partial_{n-1})
\cdot\partial_{k-1}(\partial_{k-2}\partial_{k-1}) \cdots (\partial_1
\cdots \partial_{k-1}) \DS_{\wt{w}_0}.
\]

Assume that $k \geq 2$, as the proof is easier if $k\in \{\Box,1\}$. 
First, using Lemmas \ref{Dddlem} and \ref{Dlem1}, for any $p\in \Z$
we have 
\begin{equation}
\label{k2eq}
\partial_{k-1} {}^{k-1}\wh{c}^{1-k}_p = {}^{k-1}\wh{c}^{2-k}_{p-1}  = 
{}^{k-2}\wh{c}^{1-k}_{p-1}+(z_{k-1}+y_{k-1}){}^{k-2}\wh{c}^{2-k}_{p-2}.
\end{equation}
The Leibnitz rule and (\ref{k2eq}) imply that for any
integer vector $\al=(\al_1,\ldots,\al_{n-1})$, we have
\[
\partial_{k-1} {}^{\delta_{n-1}}\wh{c}^{-\delta_{n-1}}_{\alpha} = 
{}^{\delta_{n-1}-\epsilon_{n+1-k}}\wh{c}^{-\delta_{n-1}}_{\al-\epsilon_{n+1-k}}+
(z_{k-1}+y_{k-1}){}^{\delta_{n-1}-\epsilon_{n+1-k}}
\wh{c}^{-\delta_{n-1}+\epsilon_{n+1-k}}_{\al - 2\epsilon_{n+1-k}}.
\]
The last equation implies that $\partial_{k-1}$ commutes with $\star$ action of 
the raising operators $R$ in the expansion of $\RR$ on 
${}^{\delta_{n-1}}\wh{c}^{-\delta_{n-1}}_{2\delta_{n-1}}$. We deduce from this
and Lemma \ref{Dlem2} that 
\begin{align*}
\partial_{k-1}
{}^{\delta_{n-1}}\wh{P}_{2\delta_{n-1}}^{-\delta_{n-1}}(c) &= 
{}^{\delta_{n-1}-\epsilon_{n+1-k}}\wh{P}^{-\delta_{n-1}}_{2\delta_{n-1}-\epsilon_{n+1-k}}(c) \\ 
&= {}^{(n-1,\ldots, k, k-2,k-2,\ldots, 1)}
\wh{P}_{(2n-2,\ldots,2k, 2k-3, 2k-4,\ldots,2)}^{-\delta_{n-1}}(c).
\end{align*}
Iterating this calculation gives
\[
(\partial_1\cdots \partial_{k-1})\DS_{\wt{w}_0} = 
{}^{(n-1,\ldots, k, k-2,k-3,\ldots, 0)}\wh{P}_{(2n-2,\ldots, 2k, 2k-3, 2k-5,\ldots,3,1)}^{-\delta_{n-1}}(c)
\]
and furthermore
\[
\partial_{k-1}(\partial_{k-2}\partial_{k-1}) \cdots (\partial_1 \cdots
\partial_{k-1}) \DS_{\wt{w}_0} = {}^{(n-1,\ldots, k, 0,0,\ldots,
  0)}\wh{P}_{(2n-2,\ldots, 2k, k-1, k-2,\ldots,1)}^{-\delta_{n-1}}(c).
\]
Applying the operator $ \partial_{n-1} (\partial_{n-2}\partial_{n-1}) \cdots
(\partial_{k+1} \cdots \partial_{n-1})$ to the latter, 
we similarly get
\[
\DS_{\wt{v}^{(k,n)}} =
{}^{(k,k,\ldots, k, 0,0,\ldots, 0)}\wh{P}_{(n+k-1,n+k-2,\ldots, 2k, k-1,\ldots,1)}^{-\delta_{n-1}}(c).
\]

We also have $\wt{v}^{(k,n)} = (s_\Box s_2\cdots s_{k-1}) \cdots 
(s_\Box s_2)s_\Box \wt{w}^{(k,n)}$, and hence (\ref{Dddeq}) gives
\[
\DS_{\wt{w}^{(k,n)}} = \partial_\Box (\partial_2\partial_\Box) \cdots (\partial_{k-1}
\cdots\partial_2\partial_\Box)\DS_{\wt{v}^{(k,n)}}.
\]
Finally, using the fact that 
\[
{}^{(\rho,r)}\wh{P}^{(\be,b)}_{(\al,0)}(c) = {}^{\rho}\wh{P}^{\be}_{\al}(c),
\]
we compute that
\begin{align*}
\DS_{\wt{w}^{(k,n)}} &=  \partial_\Box (\partial_2\partial_\Box) \cdots (\partial_{k-1}
\cdots\partial_2\partial_\Box)\DS_{\wt{v}^{(k,n)}} \\
&= \partial_\Box (\partial_2\partial_\Box) \cdots (\partial_{k-2}
\cdots\partial_2\partial_\Box)
{}^{(k,\ldots, k, 0,\ldots, 0)}\wh{P}_{(n+k-1,\ldots, 2k, k-2, \ldots,1)}
^{(1-n,\ldots,-k,2-k\ldots,0)}(c) \\
&={}^{(k,\ldots, k)}\wh{P}_{(n+k-1,\ldots, 2k)}^{(1-n,\ldots,-k)}(c).\\
\end{align*}
\end{proof}

More generally, using similar arguments to those above, we can prove that
\[
\DS_{\wt{w}_0(\fraka)}(X\,; Y,Z) = 
{}^{\rho(\fraka)}\wh{P}^{\be(\fraka)}_{\la(\fraka)}(c),
\]
where $\la(\fraka)$, $\be(\fraka)$, and $\rho(\fraka)$ denote the sequences
\[
\la(\fraka)=(n+a_p-1,\ldots,2a_p,\ldots,
a_i+a_{i+1}-1,\ldots,2a_i,\ldots,a_1+a_2-1,\ldots,2a_1)\,;
\]
\[
\be(\fraka)=(1-n,\ldots,-a_p,\ldots,
1-a_{i+1},\ldots,-a_i,\ldots,1-a_2,\ldots,-a_1)\,;
\]
and
\[
\rho(\fraka)=(a_p^{n-a_p},\ldots,a_i^{a_{i+1}-a_i},\ldots,a_1^{a_2-a_1}).
\]

\subsection{Eta polynomials}
\label{eps}

According to \cite{BKT1,BKT3}, a {\em typed $k$-strict partition} is a
pair consisting of a $k$-strict partition $\la$ together with an
integer $\type(\la)\in \{0,1,2\}$, which is positive if and only if
$\la_j=k$ for some index $j$. We assume that $k>0$ here, although it
is straightforward to include the case $k=0$, where a typed $0$-strict
partition is simply a strict partition (of type zero). There is a
bijection between the $k$-Grassmannian elements $w$ of $\wt{W}_\infty$
and typed $k$-strict partitions $\la$, under which the elements in
$\wt{W}_n$ correspond to typed partitions whose diagram fits inside an
$(n-k)\times (n+k-1)$ rectangle, obtained as follows. If the element
$w$ corresponds to the typed partition $\la$, then for each $j\geq 1$,
\[
\la_j=\begin{cases} 
|w_{k+j}|+k-1 & \text{if $w_{k+j}<0$}, \\
\#\{p\leq k\, :\, |w_p|> w_{k+j}\} & \text{if $w_{k+j}>0$}
\end{cases}
\]
while $\type(\la)>0$ if and only if $|w_1|>1$, and in this case
$\type(\la)$ is equal to $1$ or $2$ depending on whether $w_1>0$ or
$w_1<0$, respectively. To any typed $k$-strict partition $\la$, we
associate a finite set of pairs $\cC(\la)$ and a sequence
$\beta(\la)=\{\beta_j(\la)\}_{j\geq 1}$ using this bijection and the
same equations (\ref{Cweq}) and (\ref{css}) as in the type C case.

The Schubert polynomials indexed by $k$-Grassmannian elements are
represented by {\em eta polynomials}. For any typed $k$-strict
partition $\la$, the raising operator expression $R^\la$ is defined by
equation (\ref{Req}), as before.  Let $\ell$ denote the length of
$\la$, let $\ell_k(\la)$ denote the number of parts $\la_i$ which are
strictly greater than $k$, let $m:=\ell_k(\la)+1$ and $\be:=\be(\la)$.
If $R:=\prod_{i<j} R_{ij}^{n_{ij}}$ is any raising operator, denote by
$\supp_m(R)$ the set of all indices $i$ and $j$ such that $n_{ij}>0$
and $j<m$, and set $\nu:=R\la$. If $\type(\la)=0$, then define
\[
R \star \wh{c}^\be_{\la} = \ov{c}^\be_{\nu} :=
\ov{c}_{\nu_1}^{\be_1}\cdots\ov{c}^{\be_\ell}_{\nu_\ell}
\]
where for each $i\geq 1$, 
\[
\ov{c}_{\nu_i}^{\be_i}:= 
\begin{cases}
{}^kc_{\nu_i}^{\be_i} & \text{if $i\in\supp_m(R)$}, \\
{}^k\wh{c}_{\nu_i}^{\be_i} & \text{otherwise}.
\end{cases}
\]
For any $p,r\in \Z$ and $s\in \{0,1\}$, 
define 
\[
a^s_p:= \frac{1}{2}{}^kc_p+\omega_p^s, \ \ \
b^s_k:= {}^kb_k+\omega_k^s, \ \ \, \text{and} \ \ \,
\wt{b}^s_k:= {}^k\wt{b}_k+\omega_k^s,
\]
where $\dis\omega_p^s=\omega_p^s(X\, ;\, Y,Z):=\sum_{j=1}^p
{}^kc_{p-j} \, h^s_j(-Z)$.

If $\type(\la)>0$ and $R$ involves any factors $R_{ij}$ with $i=m$ or
$j=m$, then define
\[
R \star \wh{c}^\be_{\la} := \ov{c}_{\nu_1}^{\be_1} \cdots 
\ov{c}_{\nu_{m-1}}^{\be_{m-1}} \, a^{\be_m}_{\nu_m} \, c_{\nu_{m+1}}^{\be_{m+1}}
\cdots c^{\be_\ell}_{\nu_\ell},
\]
where $c_{\nu_i}^{\be_i}:={}^kc_{\nu_i}^{\be_i}$ for each $i$.
If $R$ has no such factors, then define
\[
R \star \wh{c}^\be_{\la} := \begin{cases}
\ov{c}_{\nu_1}^{\be_1} \cdots 
\ov{c}_{\nu_{m-1}}^{\be_{m-1}} \, b^{\be_m}_k \, c_{\nu_{m+1}}^{\be_{m+1}}
\cdots c^{\be_\ell}_{\nu_\ell} & 
\text{if  $\,\type(\la) = 1$}, \\
\ov{c}_{\nu_1}^{\be_1} \cdots
\ov{c}_{\nu_{m-1}}^{\be_{m-1}} \, \wt{b}^{\be_m}_k \, c_{\nu_{m+1}}^{\be_{m+1}} 
\cdots c^{\be_\ell}_{\nu_\ell} 
& \text{if  $\,\type(\la) = 2$}.
\end{cases}
\]
Following \cite{T7}, define the {\em double eta polynomial}
$\Eta_\la(X\,;Y_{(k)},Z)$ by
\begin{equation}
\label{etadef}
\Eta_\la(X\,;Y_{(k)},Z) := 2^{-\ell_k(\la)}R^\la\star\wh{c}^{\be(\la)}_{\la}.
\end{equation}
The single eta polynomial $\Eta_\la(X\,;Y_{(k)})$ of \cite{BKT3} is given by
\[
\Eta_\la(X\,;Y_{(k)}):=\Eta_\la(X\,;Y_{(k)},0).
\] 
As in \S \ref{tps}, we note that we are working here with the images
in the ring $\Gamma'[Y,Z]$ of the eta polynomials $\Eta_\la(c)$ and
$\Eta_\la(c\, |\, t)$ from \cite{T6, T7}.

Fix a rank $n$ and let $$\wt{\la}_0:=(n+k-1,n+k-2,\ldots,2k)$$ be the
typed $k$-strict partition associated to the $k$-Grassmannian element 
$\wt{w}^{(k,n)}$ of maximal length in $\wt{W}_n$. We deduce from 
Proposition \ref{wtknprop} and (\ref{etadef}) that
\begin{equation}
\label{deqt}
\DS_{\wt{w}^{(k,n)}}(X\,; Y,Z) = \Eta_{\wt{\la}_0}(X\,; Y_{(k)},Z).
\end{equation}

Using raising operators, it is shown in \cite[Prop.\ 5]{T7} that if
$\la$ and $\mu$ are typed $k$-strict partitions such that 
$|\la|=|\mu|+1$ and $w_\la=s_iw_{\mu}$ for some simple
reflection $s_i\in \wt{W}_\infty$, then we have
\begin{equation}
\label{pareqD}
\partial_i\Eta_\la(X\,; Y_{(k)},Z)  = \Eta_{\mu}(X\,; Y_{(k)},Z)
\end{equation}
in $\Gamma'[Y,Z]$.\footnote{The paper \cite{T7} assumes that $k>0$,
but the proofs also work (and are simpler) when $k=0$.} It follows
easily from (\ref{deqt}) and (\ref{pareqD}) that for any typed
$k$-strict partition $\la$ with associated $k$-Grassmannian element
$w_\la$, we have
\[
\DS_{w_\la}(X\,; Y,Z) = \Eta_{\la}(X\,; Y_{(k)},Z)
\]
in $\Gamma'[Y,Z]$. In particular, we recover the equality
\begin{equation}
\label{deqtsing}
\DS_{w_\la}(X\,; Y) =  \Eta_{\la}(X\,; Y_{(k)})
\end{equation}
in $\Gamma'[Y]$ from \cite[Prop.\ 6.3]{BKT3} for the single Schubert and eta
polynomials.

\subsection{Mixed Stanley functions and splitting formulas}
\label{msfsfsD}

For any $w\in \wt{W}_\infty$, the {\em double mixed Stanley function}
$I_w(X\,;Y/Z)$ is defined by the equation
\[
I_w(X\,;Y/Z) := \langle \tilde{A}(Z)D(X)A(Y), w\rangle = 
\sum_{uv\om=w}G_{u^{-1}}(-Z)E_v(X)G_\om(Y),
\]
where the sum is over all reduced factorizations $uv\om=w$ with
$u,\om\in S_\infty$. The single mixed Stanley function $I_w(X\,;Y)$ from
\cite[\S 6]{T5} is
given by setting $Z=0$ in $I_w(X\,;Y/Z)$. 

Fix an element $k\in \N_\Box$. If $k\geq 2$, we say that an element
$w\in \wt{W}_\infty$ is {\em increasing up to $k$} if $|w_1|<
w_2<\cdots < w_k$. Furthermore, we adopt the convention that every
element of $\wt{W}_\infty$ is increasing up to $\Box$ and increasing
up to $1$. If $w$ is increasing up to $k$, then there is an analogue
of (\ref{keyidC}) for the {\em restricted mixed Stanley function}
$I_w(X\,;Y_{(k)})$, which is obtained from $I_w(X;Y)$ after setting
$y_i=0$ for $i>k$. In this case, according to \cite[Eqn.\ (33)]{T5},
we have
\begin{equation}
\label{keyidD}
\DS_w(X\,;Y)= 
\sum_{v(1_k\times \om) = w}I_v(X\,;Y_{(k)})\AS_\om(y_{k+1},y_{k+2},\ldots),
\end{equation}
where the sum is over all reduced factorizations $v(1_k\times \om) =
w$ in $\wt{W}_\infty$ with $\om\in S_\infty$. Akin to \S
\ref{ssfsfsA} and \S \ref{msfsfsC}, equation (\ref{keyidD}) has a
double version: let $I_v(X\,;Y_{(k)}/Z_{(\ell)})$ be the power series
obtained from $I_v(X\,;Y/Z)$ by setting $y_i=z_j=0$ for all $i>k$ and
$j>\ell$. Then if $w$ is increasing up to $k$ and $w^{-1}$ is
increasing up to $\ell$, we have
\begin{equation}
\label{keyidD2}
\DS_w(X\,;Y,Z)= 
\sum \AS_{u^{-1}}(-Z_{>\ell}) I_v(X\,;Y_{(k)}/Z_{(\ell)})\AS_\om(Y_{>k}),
\end{equation}
where the sum is over all reduced factorizations $(1_{\ell}\times
u)v(1_k\times \om) = w$ in $\wt{W}_\infty$ with $u,\om\in S_\infty$.

We say that an element $w\in \wt{W}_\infty$ is {\em compatible} with
the sequence $\fraka \, :\, a_1 < \cdots < a_p$ of elements of
$\N_\Box$ if all descent positions of $w$ are contained in $\fraka$
(following \S \ref{ttoeta}, we assume that $a_1\neq 1$). Let $\frakb
\, :\, b_1 < \cdots <b_q$ be a second sequence of elements of
$\N_\Box$, and suppose that $w$ is compatible with $\fraka$ and
$w^{-1}$ is compatible with $\frakb$. The notion of a reduced
factorization $u_1\cdots u_{p+q-1} = w$ compatible with $\fraka$,
$\frakb$ and the sets of variables $Y_i$ and $Z_j$ for $i,j\geq 1$ are
defined exactly as in \S \ref{msfsfsC}.

\begin{prop}
\label{split2Dprop}
Suppose that $w$ and $w^{-1}$ are compatible with $\fraka$ and 
$\frakb$, respectively. Then the
Schubert polynomial $\DS_w(X\,;Y,Z)$ satisfies
\[
\DS_w = \sum
G_{u_1}(0/Z_q)\cdots G_{u_{q-1}}(0/Z_2)
I_{u_q}(X\,;Y_1/Z_1)G_{u_{q+1}}(Y_2) \cdots G_{u_{p+q-1}}(Y_p)
\]
summed over all reduced factorizations $u_1\cdots u_{p+q-1} = w$
compatible with $\fraka$, $\frakb$.  
\end{prop}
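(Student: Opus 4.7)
The plan is to mimic the proof of Proposition \ref{split2prop} verbatim, replacing the type C identity (\ref{keyidC2}) by its type D counterpart (\ref{keyidD2}) and the type C mixed Stanley function $J_w$ by its type D analogue $I_w$. First I would verify that the compatibility hypotheses force $w$ to be increasing up to $a_1$ and $w^{-1}$ to be increasing up to $b_1$: when $a_1 = \Box$ this holds by the convention of \S \ref{msfsfsD}, and when $a_1 \geq 2$ the absence of descents of $w$ at any position in $\{\Box, 1, \ldots, a_1 - 1\}$ translates into the inequalities $|w_1| < w_2 < \cdots < w_{a_1}$; the symmetric argument applies on the inverse side. Applying (\ref{keyidD2}) with $k = a_1$ and $\ell = b_1$ then gives
\[
\DS_w(X\,;Y,Z) = \sum \AS_{u^{-1}}(-Z_{>b_1}) \, I_v(X\,;Y_{(a_1)}/Z_{(b_1)}) \, \AS_{u'}(Y_{>a_1}),
\]
summed over reduced factorizations $(1_{b_1}\times u)\, v\, (1_{a_1}\times u') = w$ with $u, u' \in S_\infty$.

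Next I would iterate the type A identity (\ref{keyid}) to expand each outer type A Schubert polynomial as a product of $G$-functions in the prescribed alphabets. The compatibility of $w^{-1}$ with $\frakb$ implies that $u^{-1}$ is compatible with the shifted sequence $b_2 - b_1 < \cdots < b_q - b_1$ in $S_\infty$; hence $q-1$ successive applications of (\ref{keyid}) in the alphabet $-Z_{>b_1}$ yield the product $G_{u_1}(0/Z_q) \cdots G_{u_{q-1}}(0/Z_2)$, with the $Z$-blocks appearing in the order required by the statement. An entirely parallel iteration in the alphabet $Y_{>a_1}$, using the shifted sequence $a_2 - a_1 < \cdots < a_p - a_1$, produces $G_{u_{q+1}}(Y_2) \cdots G_{u_{p+q-1}}(Y_p)$. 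The middle factor is simply relabeled as $u_q := v$, so that $I_v(X\,;Y_{(a_1)}/Z_{(b_1)}) = I_{u_q}(X\,;Y_1/Z_1)$.

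The one remaining step, which is purely combinatorial, is to check that the reduced factorizations $u_1 \cdots u_{p+q-1} = w$ produced by the iteration coincide with those declared compatible with $(\fraka, \frakb)$ in \S \ref{msfsfsD}. The fixed-point conditions imposed at each application of (\ref{keyid}) and (\ref{keyidD2}) give precisely $u_j \in S_\infty$ for $j \neq q$, $u_j(i) = i$ for $i \leq b_{q-j}$ when $j < q$, and $u_j(i) = i$ for $i \leq a_{j-q}$ when $j > q$. I do not anticipate a serious obstacle: the only step that requires the type D machinery (the series $D(X)$ and the mixed Stanley function $I_w$) is the initial application of (\ref{keyidD2}), after which all subsequent manipulations reduce to type A identities exactly as in the type C proof.
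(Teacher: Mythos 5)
Your proposal is correct and follows exactly the same route as the paper's proof, which is stated in a single sentence: apply the double key identity (\ref{keyidD2}) once to split off the middle factor $I_{u_q}(X\,;Y_1/Z_1)$, then iterate the type A identity (\ref{keyid}) on the outer $\AS$-factors. Your elaboration of the compatibility bookkeeping (translating the absence of descents at $\{\Box,1,\dots,a_1-1\}$ into the increasing-up-to-$a_1$ condition, and the fixed-point conditions on the $u_j$) is accurate and consistent with the conventions of \S\,\ref{msfsfsD}.
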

\begin{proof}
The result is shown by using (\ref{keyidD2}) and iterating the
identity (\ref{keyid}).
\end{proof}

If $w$ is increasing up to $k$, then the following generalization of
equation (\ref{DStan0}) holds (see \cite[Eqn.\ (32)]{T5}):
\begin{equation}
\label{DmStan}
I_w(X\,;Y_{(k)}) = \sum_{\la\, :\, |\la| = \ell(w)} d^w_{\la}\,\Eta_{\la}(X;Y_{(k)}),
\end{equation}
where the sum is over typed $k$-strict partitions $\la$ with $|\la| =
\ell(w)$. The {\em mixed Stanley coefficients} $d^w_{\la}$ in
(\ref{DmStan}) are nonnegative integers which have a combinatorial
interpretation, as in \S \ref{msfsfsC}.  The proof of (\ref{DmStan})
in \cite{T5} uses (\ref{deqtsing}), and is similar to the type C case.

Assume that $w$ is increasing up to $k$ and 
$w^{-1}$ is increasing up to $\ell$. Then the 
(restricted) double mixed Stanley function
$I_w(X\,;Y_{(k)}/Z_{(\ell)})$ satisfies
\begin{align}
\label{Ieq1}
I_w(X\,;Y_{(k)}/Z_{(\ell)}) 
&= \sum_{uv=w}G_{u^{-1}}(-Z_{(\ell)})I_v(X\,;Y_{(k)}) \\
\label{Ieq2}
&= \sum_{uv=w^{-1}}G_{u^{-1}}(Y_{(k)})I_v(X\,;-Z_{(\ell)}),
\end{align}
where the sums are over reduced factorizations as shown, with $u\in
S_\infty$. We can now use equations (\ref{Geq}) and (\ref{DmStan}) in
(\ref{Ieq1}) and (\ref{Ieq2}) to obtain two expansions of
$I_w(X\,;Y_{(k)}/Z_{(\ell)})$ as a positive sum of products of Schur
$S$-polynomials with eta polynomials, as in Example \ref{Jex}.

\begin{thm}[\cite{T5}, Cor.\ 3]
Suppose that $w$ is compatible with $\fraka$ and $w^{-1}$ is compatible
with $\frakb$, where $b_1=\Box$. Then we have
\[
\DS_w = \sum_{\underline{\la}} 
g^w_{\underline{\la}}\,
s_{\la^1}(0/Z_q)\cdots s_{\la^{q-1}}(0/Z_2)
\Eta_{\la^q}(X\,;Y_1)s_{\la^{q+1}}(Y_2)\cdots s_{\la^{p+q-1}}(Y_p)
\]
summed over all sequences of partitions
$\underline{\la}=(\la^1,\ldots,\la^{p+q-1})$ with $\la^q$ $a_1$-strict and typed,
where 
\[
g^w_{\underline{\la}} := \sum_{u_1\cdots u_{p+q-1} = w}
a_{\la^1}^{u_1}\cdots a_{\la^{q-1}}^{u_{q-1}}
d_{\la^q}^{u_q}a_{\la^{q+1}}^{u_{q+1}}\cdots a_{\la^{p+q-1}}^{u_{p+q-1}}
\]
summed over all reduced factorizations $u_1\cdots u_{p+q-1} = w$
compatible with $\fraka$, $\frakb$.
\end{thm}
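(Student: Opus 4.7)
The plan is to follow essentially the same route as the proof of Theorem \ref{Csplitthm}, now in the even orthogonal setting. I would begin by invoking Proposition \ref{split2Dprop}, which expresses $\DS_w$ as a sum over reduced factorizations $u_1\cdots u_{p+q-1}=w$ compatible with $\fraka$, $\frakb$ of terms of the form
$$
G_{u_1}(0/Z_q)\cdots G_{u_{q-1}}(0/Z_2)\,I_{u_q}(X\,;Y_1/Z_1)\,G_{u_{q+1}}(Y_2)\cdots G_{u_{p+q-1}}(Y_p).
$$
The hypothesis $b_1=\Box$ makes the alphabet $Z_1$ empty, so the middle factor collapses to the single mixed Stanley function $I_{u_q}(X\,;Y_1)$.

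Next I would expand each factor in the appropriate Schur or eta basis. For each index $j\neq q$, equation (\ref{Geq}) applied in the corresponding alphabet gives
$$
G_{u_j}(0/Z_{q+1-j}) = \sum_{\la^j} a^{u_j}_{\la^j}\,s_{\la^j}(0/Z_{q+1-j}) \qquad (j<q),
$$
$$
G_{u_j}(Y_{j-q+1}) = \sum_{\la^j} a^{u_j}_{\la^j}\,s_{\la^j}(Y_{j-q+1}) \qquad (j>q).
$$
The compatibility of the factorization with $\fraka$ guarantees that $u_q$ is increasing up to $a_1$, so equation (\ref{DmStan}) applies and yields
$$
I_{u_q}(X\,;Y_1) = \sum_{\la^q} d^{u_q}_{\la^q}\,\Eta_{\la^q}(X\,;Y_1),
$$
the sum running over $a_1$-strict typed partitions $\la^q$. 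Substituting these three families of expansions into the formula from Proposition \ref{split2Dprop}, interchanging the orders of summation, and collecting coefficients according to the definition of $g^w_{\underline{\la}}$ gives the desired identity.

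The only step requiring real care is the bookkeeping: one must verify that the compatibility conditions on the reduced factorization $u_1\cdots u_{p+q-1}=w$ actually force $u_j\in S_\infty$ for $j\neq q$ and $u_q$ to be increasing up to $a_1$ in the sense of \S \ref{msfsfsD}, so that (\ref{Geq}) and (\ref{DmStan}) may legitimately be invoked. The convention adopted there that every element of $\wt{W}_\infty$ is automatically increasing up to $\Box$ or $1$ uniformly handles the admissible values $a_1\in\{\Box\}\cup\{2,3,\ldots\}$. Beyond this, the argument is a mechanical combination of Proposition \ref{split2Dprop} with (\ref{Geq}) and (\ref{DmStan}), exactly parallel to the derivation of Theorem \ref{Csplitthm} in the symplectic case.
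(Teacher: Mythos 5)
Your argument is precisely the paper's proof: apply Proposition \ref{split2Dprop}, note that $b_1=\Box$ empties $Z_1$ so the middle factor reduces to $I_{u_q}(X\,;Y_1)$, and then substitute the expansions (\ref{Geq}) for the $G$ factors and (\ref{DmStan}) for the $I$ factor before collecting coefficients. The bookkeeping observations you add about compatibility forcing $u_j\in S_\infty$ for $j\neq q$ and $u_q$ increasing up to $a_1$ are correct and make the short proof in the text slightly more explicit.
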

\begin{proof}
This follows from Proposition \ref{split2Dprop} by using 
equations (\ref{Geq}) and (\ref{DmStan}).
\end{proof}

\section{Geometrization}
\label{geometriz}

In this section, we discuss the precise way in which the Schubert
polynomials given in the previous sections represent degeneracy loci
of vector bundles in the sense of \cite{Fu2}.  This has been addressed
in earlier work (see \cite[\S 4]{T5} and \cite[\S 6, 7]{T6}), but our
aim here is to provide some more detailed historical comments, which
include the author's papers \cite{T2, T3}. We restrict attention to
the symplectic case, as the orthogonal types are analogous to type C,
and the situation in type A has been understood since \cite{Fu1}.
Throughout the section $\XX:=(\x_1,\x_2,\ldots)$ and $\YY:=(\y_1,\y_2,
\ldots)$ will denote two sequences of commuting independent variables,
and for every integer $n\geq 1$, we set $\XX_n:=(\x_1,\ldots,\x_n)$ and
$\YY_n:=(\y_1,\ldots, \y_n)$.

Consider a vector bundle $E\to M$ of rank $2n$ over a smooth algebraic
variety $M$, equipped with an everywhere nondegenerate skew-symmetric 
form $E\otimes E\to \C$. Assume that we are given two complete flags of 
subbundles of $E$
\begin{equation}
\label{seqs}
E_\bull\ :\ 0 \subset E_1\subset \cdots \subset E_{2n}=E \ \ \mathrm{and} \ \ 
F_\bull\ :\ 0 \subset F_1\subset \cdots \subset F_{2n}=E
\end{equation}
with $\rank E_i = \rank F_i = i$ for each $i$, while
$E_{n+i}=E_{n-i}^{\perp}$ and $F_{n+i}=F_{n-i}^{\perp}$ for $0\leq i < n$.
For any $w$ in the Weyl group $W_n$, we have the degeneracy locus
\begin{equation}
\label{degdef}
\X_w:=\{x\in M\ |\ \dim(E_i(x)\cap F_j(x)) \geq d_w(i,j) \ \, \forall \, 
i\in [1,n],j\in [1,2n]\},
\end{equation}
where $d_w(i,j)\in \Z$ and the inequalities in (\ref{degdef}) are exactly 
those which define the Schubert variety $X_w(F_\bull)$ in the flag variety 
$\IF_n:=\Sp_{2n}/B$ (the precise values $d_w(i,j)$ are given in 
\cite[\S 6.2]{T6}). We assume that $\X_w$ has pure codimension 
$\ell(w)$ in $M$, and seek a formula which expresses the cohomology
class $[\X_w]\in \HH^*(M)$ as a universal polynomial in the Chern classes
of the vector bundles $E_i$ and $F_j$.

When the vector bundles $F_j$ are trivial, the answer to the above
{\em degeneracy locus problem} coincides with the answer to the {\em
  Giambelli problem} for $\HH^*(\IF_n)$, which amounts to a theory of
(single) {\em symplectic Schubert polynomials}. In this setting, the
$E_i$ are the universal (or tautological) vector bundles over $M=\IF_n$.
The cohomology ring of $\IF_n$ has a standard Borel presentation
\cite{Bo} as a quotient ring
\begin{equation}
\label{borpre}
\HH^*(\IF_n,\Z) \cong \Z[\x_1,\ldots,\x_n]/\J_n,
\end{equation}
where the variables $\x_i$ represent the characters of the Borel
subgroup $B$, or the Chern roots of the dual of the Lagrangian
subbundle $E_n$, and $\J_n$ denotes the ideal generated by the
$W_n$-invariant polynomials of positive degree. The aim of a theory of
Schubert polynomials is to provide a combinatorially explicit and
natural set of polynomial representatives $\{\CS_w(\XX_n)\}_{w\in
  W_n}$ for the Schubert classes $\{[X_w]\}_{w\in W_n}$ in the
presentation (\ref{borpre}) of $\HH^*(\IF_n,\Z)$.

Among the many desirable attributes of the Schubert
polynomials $\CS_w(\XX_n)$, the {\em stability property} is perhaps
the most important. This states that we have
\[
\CS_{j_n(w)}(\x_1,\ldots,\x_n,0) = \CS_w(\x_1,\ldots,\x_n), \ \ \forall \,
w\in W_n,
\]
where $j_n:W_n\hookrightarrow W_{n+1}$ is the natural inclusion map of
Weyl groups. The significance of this property was already recognized
in the work of Lascoux and Sch\"utzenberger \cite{LS} on type A
Schubert polynomials, where -- together with the fact that they
represent Schubert classes -- it completely characterizes them.

The inclusions $j_n$ induce surjections
\begin{equation}
\label{embeddings}
\cdots \rightarrow \HH^*(\IF_{n+1},\Z) \rightarrow
\HH^*(\IF_n,\Z)\rightarrow \cdots
\end{equation}
and the inverse limit of the system (\ref{embeddings}) in the category
of graded rings is the {\em stable cohomology ring} $\IH(\IF)$. The
stability property implies that the symplectic Schubert polynomials
lift to give representatives $\CS_w(\XX)$ of the {\em stable Schubert
  classes} $\dis \sigma_w:=\lim_{\longleftarrow}[X_w]$, one for every
$w\in W_\infty$. Unlike the situation in type A, the $\CS_w(\XX)$ will
no longer be polynomials in $\XX$, but formal power series (see
Example \ref{linearSP} below). Moreover, a special role is played by
the subring of $\IH(\IF)$ invariant under the action of the symmetric
group $S_\infty$, whose elements are represented by symmetric power
series, and which is isomorphic to the stable cohomology ring
$\IH(\LG)$ of the {\em Lagrangian Grassmannian} $\LG(n,2n)$.

The Giambelli problem for the cohomology ring of $\LG(n,2n)$ was
solved by Pragacz \cite{P} by using the theory of Schur $Q$-functions,
and the resulting isomorphism between $\HH^*(\LG(n,2n),\Z)$ and a
certain quotient of the ring $\Gamma$ of Schur $Q$-functions was
further studied by J\'ozefiak \cite{J}.  Let $\Lambda$ denote the ring
of symmetric functions in the variables $\XX$, so that
$\Lambda=\Z[e_1(\XX),e_2(\XX),\ldots]$, and let $\I$ be the ideal of
$\Lambda$ generated by the homogeneous symmetric functions in
$\XX^2:=(\x_1^2,\x_2^2,\ldots)$ of positive degree. According to
\cite[Cor.\ 2.3]{J}, the surjective map $\eta:\Lambda\to\Gamma$ with
$\eta(e_i(\XX)):=q_i(X)$ for all $i\geq 1$ induces an isomorphism
$\Lambda/\I\cong \Gamma$.

Define a map $\phi_n:\Z[\XX] \to\Z[\XX_n]$ by $\x_i\mapsto \x_i$ for
$i\leq n$, while $\x_i\mapsto 0$ for $i>n$. If
$\Lambda_n:=\Z[e_1(\XX_n),\ldots,e_n(\XX_n)] =\Z[\XX_n]^{S_n}$ is the
ring of symmetric polynomials in $\XX_n$, then $\phi_n$ induces an
homonymous map $\Lambda \to\Lambda_n$. Setting
$\I_n:=\phi_n(\I)=\Lambda_n(e_1(\XX_n^2),\ldots,e_n(\XX_n^2))$, we
then have a commutative diagram of rings
\[
\xymatrix@M=6pt{
\Gamma \ar[r]^{\pi \ \,} & \Lambda/\I 
\ar[d]^{\phi_n} \ar[r] & \IH(\LG) \ar[d] \\
& \Lambda_n/\I_n \ar[r]^{\psi_0 \qquad} & \HH^*(\LG(n,2n)) }
\]
where the horizontal arrows are isomorphisms. The map $\psi_0$ sends
$e_i(\XX_n)$ to the $i$-th Chern class $c_i(E/E_n)$ of the universal 
quotient bundle over $\LG(n,2n)$. The resulting surjection
$\Gamma\to \HH^*(\LG(n,2n))$ maps $Q_\la(X)$ to the Schubert class $[X_\la]$,
for any strict partition $\la$ with $\la_1\leq n$, and to zero, otherwise.

Since the combinatorial theory of Schur $Q$-functions was well
understood and analogous to the type A theory of Schur $S$-functions,
the above picture provided a satisfactory way to do classical Schubert
calculus on $\LG$.  The study of related problems in the theory of
degeneracy loci \cite{PR, LP, KT1}, Arakelov theory \cite{T1}, and quantum
cohomology \cite{KT2}, however, required representatives for the
Schubert classes in the Borel presentation of $\HH^*(\LG(n,2n))$, and
hence in the ring $\Lambda_n$. The answer was provided by Pragacz and
Ratajski's theory \cite{PR} of $\wt{Q}$-polynomials
$\wt{Q}_\la(\XX_n)$, which were extended to the $\wt{Q}$-functions
$\wt{Q}_\la(\XX)$ in \cite[\S 1.1]{T2}.  For each strict partition
$\la$, $\wt{Q}_\la(\XX)$ and $\wt{Q}_\la(\XX_n)$ are defined by the
raising operator expressions
\[
\wt{Q}_\la(\XX):=\RR\, e_\la(\XX) \ \ \mathrm{and} \ \
\wt{Q}_\la(\XX_n):=\phi_n(\wt{Q}_\la(\XX))=\RR\, e_\la(\XX_n),
\]
where as usual $e_\la:=\prod_i e_{\la_i}$. The {\em geometrization}
of the Schur $Q$-functions $Q_\la(X)$ is then displayed in the diagram
\[
\xymatrix@M=6pt{
Q_\la(X) \ar@{|->}[r]^{\pi} & \wt{Q}_\la(\XX) 
\ar@{|->}[d]^{\phi_n} \ar@{|->}[r] & \sigma_\la \ar@{|->}[d] \\
&  \wt{Q}_\la(\XX_n)  \ar@{|->}[r]^{\ \, \psi_0} & [X_\la]. }
\]
In other words, by geometrization we mean the choice of substitution
$Q_\la(X)\mapsto \wt{Q}_\la(\XX)$ shown above, which lifts the
ring homomorphisms
\[
\Gamma \stackrel{\pi}\lra \Lambda/\I \stackrel{\phi_n}\lra \Lambda_n/\I_n
\]
to maps of abelian groups
\[
\Gamma \stackrel{\pi}\lra \Lambda \stackrel{\phi_n}\lra \Lambda_n.
\]

The next step in the story was to extend the above picture to the
entire Weyl group $W_n$, and thus obtain a theory of symplectic
Schubert polynomials $\CS_w(\XX_n)$ for $\HH^*(\IF_n,\Z)$. In a
fundamental paper which built on the work of Lascoux-Sch\"utzenberger
\cite{LS} and Pragacz \cite{P}, Billey and Haiman \cite{BH} found the
combinatorially explicit family of type C Schubert polynomials
$\CS_w(X\,;Y)$ of \S \ref{spsdds}. These objects are actually formal
power series, realized as nonnegative integer linear combinations of
products $Q_\la(X)\AS_\om(Y)$ of Schur $Q$-functions and type A
Schubert polynomials. The $\CS_w(X\,;Y)$ for $w\in W_\infty$ form a
$\Z$-basis of a ring $\Gamma[Y]$ isomorphic to $\IH(\IF)$, and map
to the stable Schubert classes $\sigma_w$ under this isomorphism.

The problem with the Billey-Haiman power series $\CS_w(X\,;Y)$ was
that they were not related in \cite{BH} to the Borel presentation
(\ref{borpre}) in a way that retained combinatorial control over their
coefficients. The $\lambda$-ring substitution used in \cite[\S 2]{BH}
involved the odd power sums, and led to Schubert polynomials in the
root variables $\x_i$ which were quite complicated (see \cite[\S
  7]{FK} for a discussion of this). In 2006, motivated in part by an
application to arithmetic intersection theory, the author resolved
this issue by constructing a family of symplectic Schubert polynomials
$\CS_w(\XX_n)$ which are a {\em geometrization} of the $\CS_w(X\,;Y)$,
in the same sense that the polynomials $\wt{Q}_\la(\XX_n)$ are a
geometrization of the power series $Q_\la(Y)$.

Here are the details of this work, which eventually appeared in
\cite{T2}.  Let $\J$ be the ideal of $\Lambda[\XX]$ generated by the
elementary symmetric functions $e_p(\XX^2)$ for $p\geq 1$. The map
$\phi_n:\Lambda[\XX] \to \Z[\XX_n]$ sends $\J$ to the ideal $\J_n =
(e_1(\XX_n^2),\ldots,e_n(\XX_n^2))$. Define an isomorphism $\pi:
\Gamma[Y] \to \Lambda[\XX]/\J$ by setting $\pi(q_i(X)) := e_i(\XX)$
and $\pi(y_i):=-\x_i$ for all $i\geq 1$. We then have a diagram
\[
\xymatrix@M=6pt{
\Gamma[Y] \ar[r]^{\pi\ \,} & \Lambda[\XX]/\J 
\ar[d]^{\phi_n} \ar[r] & \IH(\IF) \ar[d] \\
 & \Z[\XX_n]/\J_n \ar[r]^{\psi} & \HH^*(\IF_n) }
\]
where the horizontal arrows are again ring isomorphisms. The map
$\psi$ is determined by $\psi(\x_i)= -c_1(E_{n+1-i}/E_{n-i})$ for
$1\leq i \leq n$.  Given any $w\in W_n$, apply equations
(\ref{dbleC2}) and (\ref{CStan0}) to write
\[
\CS_w(X\,;Y) = \sum_{v,\om,\la}e^v_\la \,Q_\la(X)\AS_\om(Y)
\]
where the sum is over all reduced factorizations $v\om=w$ and strict
partitions $\la$ such that $\om\in S_n$ and $|\la|=\ell(v)$.
Following \cite[\S 2.2]{T2}, define the symplectic Schubert
polynomials $\CS_w(\XX_n)$ and power series $\CS_w(\XX)$ by the
equations
\[
\CS_w(\XX):= \sum_{v,\om,\la}e^v_\la \,\wt{Q}_\la(\XX)\AS_\om(-\XX_n)
\ \ \mathrm{and} \ \ 
\CS_w(\XX_n):= \phi_n(\CS_w(\XX)).
\]
The {\em geometrization} of the Schubert polynomials $\CS_w(X\,;Y)$ is 
then displayed in the diagram 
\[
\xymatrix@M=6pt{
\CS_w(X\,;Y) \ar@{|->}[r]^{\ \ \pi} & \CS_w(\XX) 
\ar@{|->}[d]^{\phi_n} \ar@{|->}[r] & \sigma_w \ar@{|->}[d] \\
&  \CS_w(\XX_n)  \ar@{|->}[r]^{\ \psi} & [X_w]. }
\]

\begin{example}
\label{linearSP}
The linear Schubert polynomials $\CS_{s_i}$ are indexed by the simple
reflections $s_i$ in $W_\infty$. For each $i\in \N_0$, we have
\[
\CS_{s_i}(X\, ; Y) = q_1(X)+\AS_{s_i}(Y) = 2\left(\sum_{j=1}^{\infty} x_j\right)
+(y_1+\cdots + y_i)
\]
for the Billey-Haiman polynomials, while
\[
\CS_{s_i}(\XX) = \sum_{j=i+1}^{\infty}\x_j \quad \mathrm{and} \quad 
\CS_{s_i}(\XX_n) = \begin{cases} \x_{i+1}+\cdots + \x_n &
\text{if $i<n$}, \\  
0 & \text{otherwise}.
\end{cases}
\]
\end{example}

There remained one missing ingredient to fully solve the problem of
Schubert polynomials in the classical Lie types: define double
versions of the Billey-Haiman polynomials, and extend the above
picture to that setting. Fortunately, there was progress in this
direction, as in 2005 Ikeda \cite{I} had shown how Ivanov's factorial
Schur $Q$-functions \cite{Iv3} may be used to represent the
torus-equivariant Schubert classes in the equivariant cohomology of
the Lagrangian Grassmannian. At the March 2007 workshop on Schubert
calculus in Banff, the author suggested to Ikeda that he should work
on creating a double version of the Billey-Haiman theory. Ikeda was
also informed about the author's theory of symplectic Schubert
polynomials $\CS_w(\XX_n)$, and asked to include an extension of the
substitution $\CS_w(X\,;Y) \mapsto \CS_w(\XX_n)$ to the double case,
as it was important for geometric applications. In 2008, the work
\cite{IMN1} was announced, which used localization techniques as in
\cite{I, IN} to construct the required theory of polynomials
$\CS_w(X\,;Y,Z)$.  Moreover, the information necessary to extend the
author's geometrization of the $\CS_w(X\,;Y)$ to equivariant
cohomology was provided in \cite[\S 10]{IMN1}. This was done
explicitly in 2009, with the announcement of our paper \cite{T5}, 
and simultaneously generalized, to deal with the degeneracy loci 
coming from any isotropic partial flag variety. 

Following \cite[\S 4]{T5}, to describe how the $\CS_w(X\,;Y,Z)$
represent degeneracy loci of vector bundles, recall from \cite{Gr} and
\cite{T6} that these loci and their cohomology classes pull back from
the Borel mixing space $BM_n:=BB\times_{B\Sp_{2n}}BB$, which is the
universal case of (\ref{degdef}) and the degeneracy locus problem. Let
$$\IH(BM):=\lim_{\longleftarrow}\HH^*(BM_n,\Z)$$ be the stable
cohomology ring of $BM_n$.  Define the supersymmetric functions
$e_p(\XX/\YY)$ for $p\geq 0$ by
\[
e_p(\XX/\YY):=\sum_{j=0}^pe_j(\XX)h_{p-j}(\YY),
\]
set $\wt{\Lambda}:=\Z[e_1(\XX/\YY),e_2(\XX/\YY),\ldots]$, and let
$\wt{\J}$ be the ideal of $\wt{\Lambda}[\XX,\YY]$ generated by the
fundamental relations
\[
e_p^2(\XX/\YY) + 2\,\sum_{j=1}^p(-1)^j e_{p+j}(\XX/\YY)e_{p-j}(\XX/\YY)
\]
for all $p\geq 1$. Define an isomorphism 
\[
\wt{\pi}:\Gamma[Y,Z] \to \wt{\Lambda}[\XX,\YY]/\wt{\J}
\] 
by setting $\wt{\pi}(q_i(X)) := e_i(\XX/\YY)$, $\wt{\pi}(y_i):=-\x_i$,
and $\wt{\pi}(z_i):=\y_i$, for each $i\geq 1$.  Moreover, let
$\wt{\phi}_n:\wt{\Lambda}[\XX,\YY] \to \Z[\XX_n,\YY_n]$ be the map
determined by $\x_i\mapsto \x_i$ and $\y_i\mapsto \y_i$ for $i\leq n$,
while $\x_i\mapsto 0$ and $\y_i\mapsto 0$ for $i>n$, and set
$\wt{\J}_n:=\phi_n(\wt{\J})\subset \Z[\XX_n,\YY_n]$. (It is not hard
to show that $\wt{\J}_n$ is equal to the ideal of $\Z[\XX_n,\YY_n]$
generated by the differences $e_p(\XX_n^2)-e_p(\YY_n^2)$ for $1\leq p
\leq n$.)  We then have a commutative diagram of rings
\[
\xymatrix@M=6pt{
\Gamma[Y,Z] \ar[r]^{\wt{\pi}\ \,} & \wt{\Lambda}[\XX,\YY]/\wt{\J} 
\ar[d]^{\wt{\phi}_n} \ar[r] & \IH(BM) \ar[d] \\
 & \Z[\XX_n,\YY_n]/\wt{\J}_n \ar[r]^{\wt{\psi}} & \HH^*(BM_n,\Z) }
\]
extending the previous ones, where again the horizontal maps are 
isomorphisms. The map $\wt{\psi}$ satisfies
$\wt{\psi}(\x_i)= -c_1(E_{n+1-i}/E_{n-i})$ and 
$\wt{\psi}(\y_i)= -c_1(F_{n+1-i}/F_{n-i})$ for each $i$ with $1\leq i \leq n$. 

Given $w\in W_n$, apply equations (\ref{dbleC2}) and
(\ref{CStan0}) to write
\[
\CS_w(X\,;Y,Z) = \sum_{u,v,\om,\la}e^v_\la \,\AS_{u^{-1}}(-Z)Q_\la(X)\AS_\om(Y)
\]
where the sum is over all reduced factorizations $uv\om=w$ and strict 
partitions $\la$ such that $u,\om\in S_n$ and $|\la|=\ell(v)$. Define the 
{\em supersymmetric $\wt{Q}$-function} $\wt{Q}_\la(\XX/\YY)$ by the 
equation
\[
\wt{Q}_\la(\XX/\YY):= \RR\, e_\la(\XX/\YY).
\]
and the {\em double symplectic Schubert polynomials} $\CS_w(\XX_n,\YY_n)$ and 
power series $\CS_w(\XX,\YY)$ by the equations
\[
\CS_w(\XX,\YY):= \sum_{u, v,\om,\la}e^v_\la \,\AS_{u^{-1}}(-\YY_n)
\wt{Q}_\la(\XX/\YY)\AS_\om(-\XX_n)
\]
and $\CS_w(\XX_n,\YY_n):= \wt{\phi}_n(\CS_w(\XX/\YY))$.  The {\em
  geometrization} of the double Schubert polynomials $\CS_w(X\,;Y,Z)$
is exhibited in the diagram
\[
\xymatrix@M=6pt{
\CS_w(X\,;Y,Z) \ar@{|->}[r]^{\ \, \wt{\pi}} & \CS_w(\XX,\YY) 
\ar@{|->}[d]^{\wt{\phi}_n} \ar@{|->}[r] 
& \text{$\dis \lim_{\longleftarrow}[\X_w]$} \ar@{|->}[d] \\
&  \CS_w(\XX_n,\YY_n)  \ar@{|->}[r]^{\ \quad  \wt{\psi}} & [\X_w]. }
\]
As before, we observe that the substitutions 
$Q_\la(X)\mapsto \wt{Q}_\la(\XX/\YY)$, $y_i\mapsto -\x_i$, and 
$z_i\mapsto \y_i$ lift the ring homomorphisms
\[
\Gamma[Y,Z] \stackrel{\wt{\pi}}\lra \wt{\Lambda}[\XX,\YY]/\wt{\J}
 \stackrel{\wt{\phi}_n}\lra \Z[\XX_n,\YY_n]/\wt{\J}_n
\]
to maps of abelian groups
\[
\Gamma[Y,Z] \stackrel{\wt{\pi}}\lra \wt{\Lambda}[\XX,\YY] 
\stackrel{\wt{\phi}_n}\lra \Z[\XX_n,\YY_n].
\]

Since the variables $\{-\x_i\}$ and $\{-\y_i\}$ for $1\leq i \leq n$
represent the Chern roots of the isotropic vector bundles in (\ref{seqs}),
it is now a simple matter to translate the formulas in this paper into
Chern class formulas for degeneracy loci. For each $r\geq 0$, define
$c_r(E-E_i-F_j)$ by the equation of total Chern classes
\[
c(E-E_i-F_j):=c(E)c(E_i)^{-1}c(F_j)^{-1}.
\]
Then the {\em geometrization map}
$\omega_n:=\wt{\psi}\wt{\phi}_n\wt{\pi}$ sends $q_r(X)$ to
$c_r(E-E_n-F_n)$ for each $r\geq 0$, and $Q_\la(X)$ to
$Q_\la(E-E_n-F_n)$ for every strict partition $\la$. 

More generally, following \cite[Thm.\ 3]{T5}, the substitution which
maps the theta polynomial $\Ti_\la(X\,;Y_{(k)})$ to
$\Ti_\la(E-E_{n-k}-F_n)$ for each $k$-strict partition $\la$ is
applied to treat the degeneracy loci which come from any symplectic
partial flag variety (see also \cite[Remark 4]{T6}). Computing the
image of formula (\ref{dbCSsplitting}), we thus obtain that the class
of the degeneracy locus $\X_w$ in $\HH^*(M)$ is equal to
\[
\sum_{\underline{\la}} f^w_{\underline{\la}}\,
s_{\la^1}(F_{n+b_{q-1}} - F_{n+b_q})\cdots \Ti_{\la^q}(E-E_{n-a_1}-F_n)
\cdots s_{\la^{p+q-1}}(E_{n-a_{p-1}}-E_{n-a_p})
\]
with the coefficients $f^w_{\underline{\la}}$ given by equation
(\ref{dbfdef0}).

The polynomials ${}^kc^r_p$ defined in (\ref{ceq}) are particularly
useful to work with, as we have the Chern class equation
\[
\omega_n({}^kc^r_p)= c_p(E-E_{n-k}-F_{n+r})
\]
(compare with \cite[Eqn.\ (31)]{TW}). For the top Schubert polynomial
$\CS_{w_0}(X\, ; Y,Z)$, equation (\ref{Ctopeq}) maps to the Pfaffian
formula
\begin{equation}
\label{cp}
[\X_{w_0}] = Q_{\delta_n+\delta_{n-1}}
(E-E_{(1,2,\ldots,n)}-F_{(1,2,\ldots,n)}) 
\end{equation}
in $\HH^*(M)$. Following \cite[Cor.\ 1]{TW}, the Chern polynomial in
(\ref{cp}) is defined as the image of the polynomial $R^\infty \,
c_{\delta_n+\delta_{n-1}}$ under the $\Z$-linear map which sends the
noncommutative monomial $c_\al$ to $\prod_jc_{\al_j}(E-E_j-F_j)$,
for every integer sequence $\al$. The geometric substitutions of this
section can thus be used to relate some of the equations for Schubert
polynomials found in the present paper to the Chern class formulas
in \cite{Ka, AF1, AF2} and elsewhere.

\end{document}